  \newtheorem{theo}{Theorem}[section]%
  \newtheorem{prop}[theo]{Proposition}%
  \newtheorem{coro}[theo]{Corollary}%
  \newtheorem{lemm}[theo]{Lemma}%
  \theoremstyle{definition}%
  \theoremstyle{remark}%
  \newtheorem{rema}[theo]{Remark}%
  \DeclareMathOperator\Law{Law} 
  \DeclareMathOperator\Leb{Leb} 
  \DeclareMathOperator\Perm{Perm}
  \DeclareMathOperator\BL{BL}
  \DeclareMathOperator\TL{TL}
  \DeclareMathOperator\exc{\mathfrak e}
  \DeclareMathOperator\Min{Min}
  \DeclareMathOperator\Sample{Sample}
  \DeclareMathOperator\dens{dens}
  \newcommand{ \mc }[1] 			{ \mathcal #1 }
  \newcommand{ \mbb }[1] 			{ \mathbb #1 }
  \newcommand{ \ceiling }[1] 		{ \lceil #1 \rceil }
  \newcommand{ \ldef } 				{ \hspace{ 1 pt } \raisebox{ 0.4 pt }{:} \hspace{ -4 pt }= }
  \newcommand{ \eps } 		{ \varepsilon }
  \newcommand{ \indicator } 	{ \mathbbm 1 }
  \newcommand{ \prob } 		{ \mathbb P }
  \newcommand{ \One } 		{ \bm{1}}
  \newcommand{\bdW} {\bm{d_W}}  
  \newcommand\si{\sigma}
  \DeclareMathOperator{\pat}{pat}
  \DeclareMathOperator{\des}{des}
  \DeclareMathOperator{\Bin}{Bin}
  \newcommand{\murec}{\bm\mu^{\text{\tiny rec}}}   
    \newcommand{\Wrec}{\bm W^{\text{\tiny rec}}}   
  \newcommand{\muBr}{\bm\mu^{\text{\tiny Br}}}        
    \newcommand{\WBr}{\bm W^{\text{\tiny Br}}}   
  \newcommand{\incPermutations}{\lambda}
\begin{document}

  \title[Random recursive separable permutations]	{
    The permuton limit of random recursive separable permutations
          }
\author[{V.} {F\'eray}]{{Valentin} {F\'eray}} 
\address{Universit\'e de Lorraine, CNRS, IECL, F-54000, Nancy, France}
\email{valentin.feray@univ-lorraine.fr (Corresponding author)}

\author[{K.} {Rivera-Lopez}]{{Kelvin} {Rivera-Lopez}}
\address{Department of Mathematics, Gonzaga University, Washington State, USA.}
\email{rivera-lopez@gonzaga.edu}

\subjclass[2020]{60C05,05A05}
\keywords{permutons, permutation patterns, random combinatorial structures}

  \begin{abstract} 
    We introduce and study a simple Markovian model of random separable permutations. Our first main result is the almost sure convergence of these permutations
    towards a random limiting object in the sense of permutons,
    which we call the {\em recursive separable permuton}.
    We then prove several results on this new limiting object:
    a characterization of its distribution via a fixed-point equation,
    a combinatorial formula for its expected pattern densities,
    an explicit integral formula for its intensity measure,
    and lastly, we prove that its distribution is absolutely singular with respect to
    that of the Brownian separable permuton, which is the large size
    limit of {\em uniform} random separable permutations.
  \end{abstract}

\maketitle

\section{Introduction}

\subsection{Our model}
\label{ssec:model}

Fix $p \in (0,1)$.
We consider a sequence of random permutations $( \sigma^{(n),p} )_{n \ge 1 }$ starting from the unique permutation of size $1$ and defined recursively.
Given $\sigma^{(n),p}$, a permutation of size $n$, we obtain $\sigma^{(n+1),p}$, a permutation of size $n + 1$, by the following procedure.
\begin{enumerate}
\item Take $j$ uniformly at random between $1$ and $n$.
\item In the one line notation of $\sigma^{(n),p}$, we increase all values bigger than $j$
by $1$.
\item With probability $p$ (resp. $1-p$), we replace $j$ by $j,\, j\! +\! 1$ (resp. by $j\! +\!1,\, j$).
\end{enumerate}
An example is given on  Figure~\ref{fig:ExamplesUpOperator};
here and throughout the paper, permutations are represented by their diagrams
(the diagram of a permutation $\pi$ of size $n$ is the set of dots $(j,\pi(j))$
 drawn in an $n \times n$ grid).
 This operation of replacing a point in the diagram by two consecutive points
  (consecutive at the same time in value and in position) will be referred to as {\em inflation}.
  We call the inflation {\em increasing} or {\em decreasing},
  depending on the relative position of the two new points.
\begin{figure}
\[\includegraphics[height=32mm]{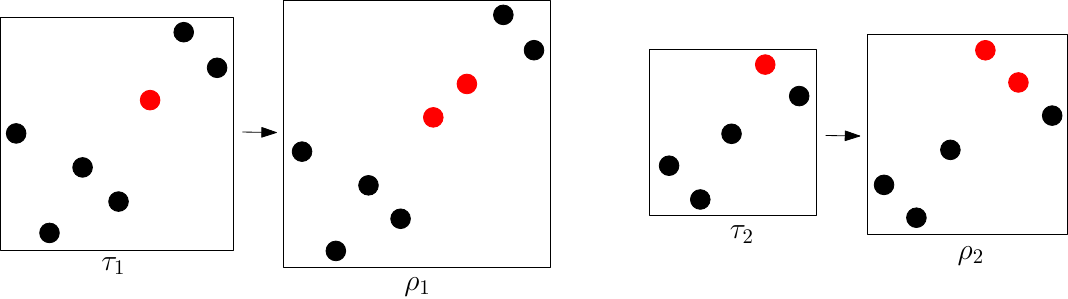}\]
\caption{Examples of possible inflation steps from 
$\tau:=\sigma^{(n),p}$ to $\rho:=\sigma^{(n+1),p}$. For $i=1$ or $2$, 
the point of $\tau_i$ chosen uniformly at random,
as well as the two new adjacent points in $\rho_i$ replacing it, are painted in red.
In $\rho_1$, these two new points are in increasing order
(we say that we have performed an increasing inflation), while,
in $\rho_2$, they are in decreasing order 
(in this case, we have performed a decreasing inflation).}
\label{fig:ExamplesUpOperator}
\end{figure}

The set of permutations which can be obtained from the permutation $1$
by repeated inflations (either increasing or decreasing) is known as the set of {\em separable permutations}. Alternatively, separable permutations are those permutations that avoid the patterns
$3142$ and $2413$,
see, {\em e.g.}, \cite{bose1998pattern}.
By construction, the permutations $( \sigma^{(n),p} )_{ n \ge 1 } $ are separable and any given separable permutation will appear in this sequence with nonzero probability.
We therefore refer to $\sigma^{(n),p}$ as the \emph{random recursive separable 
permutation (of size $n$ and parameter $p$)}.
This model differs from the model of uniform random separable permutations (studied, {\em e.g.}~in \cite{bassino2018BrownianSeparable,pinsky2021separable}) 
and, as we will see in Proposition \ref{prop:singularity}, it yields a different object in the limit.

\begin{rema}
It was asked in a recent survey on permutons 
whether uniform separable permutations can be sampled in a Markovian way
\cite[Section 5.4]{grubel2022ranks}. Though we do not answer this question here,
this served as an additional motivation to study a natural Markovian model of random
separable permutations.
\end{rema}

\subsection{The limiting permuton}

Throughout the paper, let $\Leb$ be the Lebesgue measure on $[0,1]$.
Recall that if $\nu$ is a measure on $A$ and $g$ is a measurable map from $A$ to $B$,
then the formula $g_\#\nu(C)=\nu(g^{-1}(C))$ for any measurable subset $C$ of $B$
defines a measure $g_\#\nu$ on $B$, called {the} {\em push-forward measure}.
Finally, we denote {by} $\pi_1$ and $\pi_2$ the projection map from $[0,1]^2$
to $[0,1]$ on the first and second coordinates, respectively.

By definition, a permuton is a probability measure $\mu$ on the unit square $[0,1]^2$
whose projections on the horizontal and vertical axes are both uniform,
i.e.~$(\pi_1)_\# \mu=(\pi_2)_\# \mu=\Leb$.
Permutons are natural limit objects for permutations of large sizes,
see~\cite{grubel2022ranks} for a recent survey on the topic.
Indeed, we can encode a permutation $\pi$ of size $n$
by a permuton $\mu_{\pi}=\frac1n \sum_{i=1}^n \lambda(i,\pi(i))$,
where $\lambda(i,\pi(i))$ is the measure of mass $1$ uniformly spanned
on the square $[\frac{i-1}{n};\frac in ] \times [\frac{\pi(i)-1}{n};\frac{\pi(i)}{n}]$.
Equivalently, $\mu_\pi$ has a piecewise constant density 
\[g(x,y)=\begin{cases}
  n & \text{ if } \pi(\lceil nx \rceil)= \lceil ny \rceil;\\
  0 & \text{ otherwise.}
\end{cases}\]
A sequence of permutations $\pi^{(n)}$ then converges to a permuton $\mu$
if the associated measures $\mu_{\pi^{(n)}}$ converge to $\mu$,
in the sense of the weak convergence of measures.
We can now state the first main result of this paper.
\begin{theo}\label{thm:convergence}
The random permutations $ ( \sigma^{(n),p} )_{ n \ge 1 } $ converge a.s.~to 
a random permuton.
We call this permuton the {\em recursive separable permuton (of parameter $p$)} and denote it by $\murec_p$.
\end{theo}
A sample of $\sigma^{(n),p}$ for $n \in \{10,100,1000\}$ and $p=1/2$
is given in Figure~\ref{fig:simu_increasing_separable_several_sizes}.
In this simulation, the three permutations are taken 
from the {\em same realization} of the random process $(\sigma^{(n),p})_{n \ge 1}$.
The a.s.~convergence towards a limiting permuton is visible
on this simulation.

\begin{figure}
\[\includegraphics[height=2.9cm]{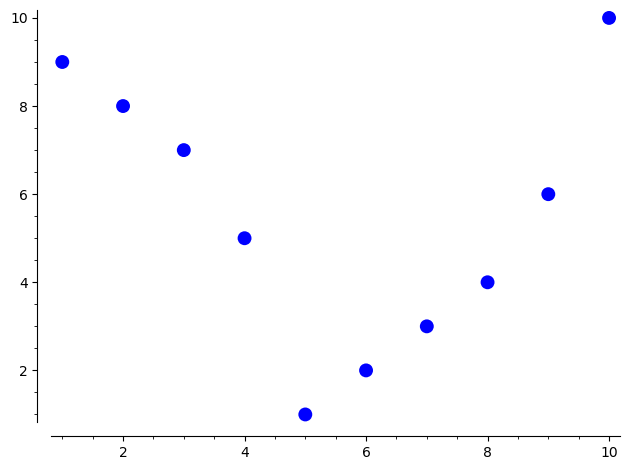}
\quad  \includegraphics[height=2.9cm]{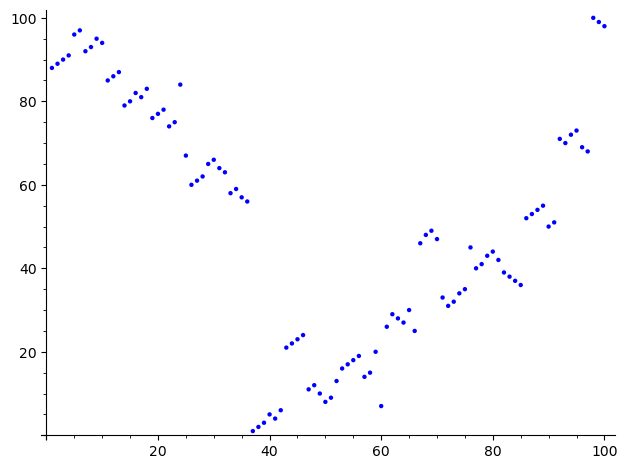}
\quad  \includegraphics[height=2.9cm]{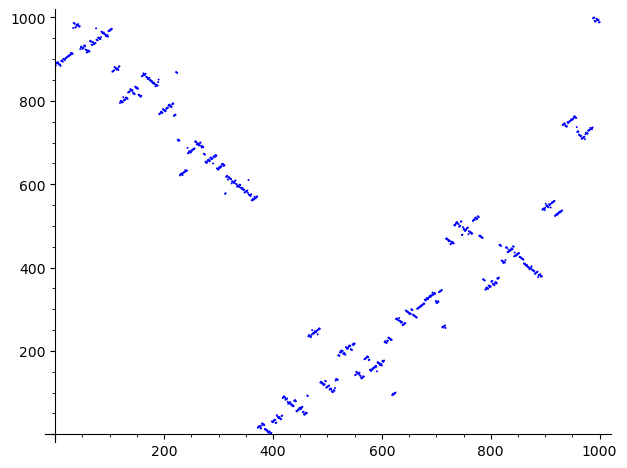}\]
\caption{A sample of permutations $\sigma^{(10),{\, 0.5}}$, $\sigma^{(100),{\, 0.5}}$,
$\sigma^{(1000),{\, 0.5}}$ corresponding to the same realization of the process 
$(\sigma^{(n),{\, 0.5}})_{n \ge 1}$.}
\label{fig:simu_increasing_separable_several_sizes}
\end{figure}

This theorem should be compared with the limit result
for {\em uniform} random separable permutations obtained in
\cite{bassino2018BrownianSeparable}.
In the latter case, the limit is the so-called {\em Brownian separable permuton}.
This Brownian separable permuton belongs to the larger family
of biased Brownian separable permutons, also indexed by a parameter $p$ in $(0,1)$,
and denoted $\muBr_p$ (we will often drop the word {\em biased} in the latter).

The constructions of the recursive and Brownian separable permutons 
share some similarities, but these are different distributions on the set of permutons.
In fact, we prove in Section~\ref{ssec:Singular_Distributions} the following stronger statement.
\begin{prop}\label{prop:singularity}
Let $p \ne q$ be fixed in $(0,1)$.
Then the distributions of the four permutons $\murec_p$, $\murec_q$, $\muBr_p$
and $\muBr_q$ are pairwise singular\footnote{We recall that two measures $\mu$ and $\nu$
are singular if there exists a measurable set $A$ such that $\mu(A)=1$ but $\nu(A)=0$.}.
\end{prop}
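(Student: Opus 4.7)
The plan is to prove each of the six pairwise singularities by finding, for each pair, a measurable statistic on the space of permutons whose laws under the two distributions are mutually singular. A natural candidate is a pattern density $T_\rho(\mu) := \widetilde{\occ}(\rho, \mu)$, which is continuous in the weak topology. By \cref{thm:convergence}, $T_\rho(\murec_p) = \lim_{n \to \infty} \widetilde{\occ}(\rho, \sigma^{(n),p})$ almost surely.

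The key lemma to prove is a concentration estimate: $\mathrm{Var}(\widetilde{\occ}(\rho, \sigma^{(n),p})) \to 0$ as $n \to \infty$, so that combined with the a.s.~convergence above one obtains $T_\rho(\murec_p) = c^{\mathrm{rec}}_\rho(p)$ a.s., where $c^{\mathrm{rec}}_\rho(p)$ is the deterministic constant given by the combinatorial formula for expected pattern densities established earlier in the paper. Heuristically, this variance decay should follow from a Doob-martingale decomposition of $\widetilde{\occ}(\rho, \sigma^{(n),p})$ along the inflation steps: each step perturbs the pattern count by a bounded amount localized in the positions affected by the inflation, and the martingale increments are only weakly correlated due to the independence of the $\pm$ signs and the uniform choice of the inflation point.

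With this a.s.~identity in hand, three of the singularities follow directly. For $\murec_p \perp \murec_q$, pick a pattern $\rho$ (of size $\ge 3$, say) for which $c^{\mathrm{rec}}_\rho(\cdot)$ is a non-constant function of its parameter; then the event $\{\mu : T_\rho(\mu) = c^{\mathrm{rec}}_\rho(p)\}$ has $\murec_p$-mass $1$ and $\murec_q$-mass $0$. For the two singularities $\murec_p \perp \muBr_q$ (any $p,q$), I would use that $T_\rho(\muBr_q)$ has an atomless distribution, which one expects from the non-atomic randomness of the Brownian CRT and the i.i.d.~sign labels underlying $\muBr_q$; in particular $\mathbb{P}(T_\rho(\muBr_q) = c^{\mathrm{rec}}_\rho(p)) = 0$. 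For the last case $\muBr_p \perp \muBr_q$, the analogous pattern-density statistic is no longer a.s.~deterministic, so I would use a different statistic tailored to the Brownian separable permuton -- for instance, an ergodic average of local sign frequencies extracted from $\muBr_p$ at vanishing scales, whose a.s.~limit recovers the parameter $p$.

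The main obstacle is the concentration step for the recursive model: the inflation operation has non-local effects on pattern counts of large tuples through the shift applied to all values above $j$, so carefully tracking the induced correlations -- most likely via a Doob decomposition along the inflation sequence combined with covariance bounds on pairs of overlapping tuples, exploiting that the $\pm$ signs are i.i.d.~and that the selection of the inflation point is uniform -- is the key technical content of the proof. The $\muBr_p$ versus $\muBr_q$ singularity is a secondary challenge and may be addressed by combining known structural results on the Brownian separable permuton (see~\cite{bassino2018BrownianSeparable}) with an ergodic argument extracting $p$ from $\muBr_p$ as an a.s.~constant.
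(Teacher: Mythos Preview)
Your central concentration claim is false: the variance of $\widetilde{\occ}(\rho,\sigma^{(n),p})$ does \emph{not} tend to zero, and the limit $T_\rho(\murec_p)=\dens(\rho,\murec_p)$ is a genuinely random variable, not the constant $c^{\mathrm{rec}}_\rho(p)$. This is visible from the self-similarity relation $\murec_p\stackrel{d}{=}\bm\mu_0\otimes_{(U,S)}\bm\mu_1$: already for $\rho=21$ one has
\[
\dens(21,\murec_p)=U^2\dens(21,\bm\mu_0)+(1-U)^2\dens(21,\bm\mu_1)+2U(1-U)\,\indicator[S=\ominus],
\]
and the last term alone gives strictly positive variance. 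Equivalently, if all pattern densities were a.s.\ constant then $\murec_p$ itself would be a deterministic permuton, which it plainly is not. So the Doob-martingale argument you sketch cannot succeed, and the three singularities you deduce from it collapse.

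The paper avoids this trap by using a \emph{sequence} of statistics $D_n(\mu)=\tfrac{1}{n-1}\mbb E[\des(\Sample(\mu,n))\mid\mu]$ and letting $n\to\infty$. The point is that $\des(\Sample(\murec_p,n))$ equals the number of $\ominus$-decorations in the underlying increasing binary tree and hence is exactly $\Bin(n-1,1-p)$; a fourth-moment/Borel--Cantelli argument then gives $D_n\to 1-p$ a.s.\ for both $\murec_p$ and $\muBr_p$, separating different parameter values. Note that $D_2(\mu)=\dens(21,\mu)$ is random, but the \emph{limit} in $n$ is deterministic because consecutive sample points become infinitesimally close and only the local sign structure survives---this is morally the ``ergodic average of local sign frequencies'' you allude to for the Brownian case, made precise. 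For the remaining case $\murec_p$ vs.\ $\muBr_p$ (same $p$), the paper uses a completely different geometric invariant: $\muBr_p$ a.s.\ puts mass near every corner of the square, while $\murec_p$ a.s.\ misses at least one corner (determined by the first sign $S_1$).
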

Thus our model of increasing separable permutations
yields a much different limiting object than the uniform separable permutations.
This is in contrast with the result of \cite{bassino2020universal},
where the Brownian separable permuton was shown to be the limit
of uniform random permutations in many permutation classes.
It seems that going from the uniform model to a recursive model as the one studied
here allows to escape the universality class of the Brownian separable permuton
(see Remark~\ref{rm:uniform_vs_recursive} for further discussion).

Another difference with the setting of uniform permutations
is that the convergence in Theorem~\ref{thm:convergence} holds in the almost sure sense.
In particular, we cannot use the criterium stating that convergence of permutons
in distribution is equivalent to the convergence of expected pattern densities.
Instead, we need to construct the limiting permuton on the same probability
space as the process $(\sigma^{(n),p})_{n \ge 1}$ of random permutations
and to prove the convergence with adhoc arguments.

\begin{rema}
  \label{rm:uniform_vs_recursive}
{There are other cases in the literature of combinatorial objects
for which the uniform measure and a natural recursive random construction
yield different asymptotic behaviors.
Most famously, for many families of trees, the typical height
of a random tree is of order $\sqrt n$ in the uniform model \cite{aldous1993CRT3}
and $\log(n)$ in recursive models \cite{pittel1994recursive_trees}.
A situation closer to the one of this paper in which we
have nontrivial but different limits for the uniform and recursive models
is that of noncrossing sets of chords in a regular $n$-gon.
Indeed, in~\cite{curien2009recursive}, Curien and LeGall
consider recursive models of noncrossing sets of chords
and show that they converge to a random limiting object $L_\infty$.
This random set $L_\infty$ is different from the limit of uniform
random triangulations of the $n$-gon, previously identified by Aldous
and known as the Brownian triangulation \cite{aldous1994triangulating}.
Let us also mention recent results on random graphs with fixed degree sequences
and random chirotopes (chirotopes are combinatorial objects encoding the relative positions of a set of points in the plane), which both exhibit differences in the asymptotic behaviors
between recursive and uniform models \cite{goaoc2020convex,molloy2022degreerestricted}.
We do not have a satisfactory intuitive explanation of these facts.}
\end{rema}

\begin{rema}
  In the theory of permutons (see, e.g., \cite[Lemma 4.2]{MR2995721}),
  there is an explicit construction for permutations that converge almost surely to a given permuton.
  It is natural to compare the permutations obtained by applying this construction to the random permuton $\murec_p$ with the recursive separable permutations.
  As sequences, these two objects are clearly different: the recursive separable permutations are constructed via repeated inflations while the other sequence is not.
  However, we shall see in Section~\ref{ssec:expected_pattern_densities} that these sequences do have the same marginal distributions.
\end{rema}

\subsection{Properties of the recursive separable permuton}
\subsubsection{Self-similarity}
The distribution of the recursive separable permuton $\murec_p$
can be characterized by a fixed-point equation. To state this property, 
we first need to introduce some notation.

Given two permutons $\mu$ and $\nu$, a real number $u$ in $[0,1]$ and a sign $S$ in $\{\oplus,\ominus\}$, we construct a new permuton $\rho=\mu \otimes_{(u,S)} \nu$ as follows.
If $S=\oplus$, we let $\rho$ be 
supported on $[0,u]^2 \cup [u,1]^2$ such that the restriction 
$\rho\vert_{ [0,u]^2 }$ (resp.~$\rho\vert_{[u,1]^2}$) 
is a rescaled version of $\mu$ of total weight $u$
(resp.~a rescaled version of $\nu$ of total weight $1-u$).
If $S=\ominus$, then $\rho$ is defined similarly,
but is 
supported on $[0,u]\times [1-u,1] \cup [u,1] \times [0,1-u]$.
We refer to Figure~\ref{fig:SelfSimilarityRecusivePermuton} for an illustration.

\begin{figure}
  \begin{center}
    \includegraphics[scale=.85]{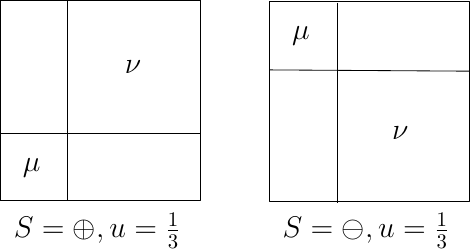}\vspace{-4mm}
  \end{center}
  \caption{The permuton $\mu \otimes_{(u,S)} \nu$.}
  \label{fig:SelfSimilarityRecusivePermuton}
\end{figure}

Now, given a random permuton $\bm\mu$,
we denote by $\Phi_p(\bm\mu)$ the random permuton 
$\bm\mu_0 \otimes_{(U,S)} \bm\mu_1$,
where $\bm\mu_0$ and $\bm\mu_1$ are two  copies of $\bm\mu$,
 $U$ is a uniform r.v.~in $[0,1]$ and $S$ is a random sign in $\{\oplus,\ominus\}$
with $\mathbb P(S=\oplus)=p$, all variables $\bm\mu_0$, $\bm\mu_1$, $U$ and $S$
being independent.

\begin{prop}
\label{prop:self_similarity}
For any $p$ in $[0,1]$, we have $\murec_p \stackrel{d}{=}\Phi_p(\murec_p)$.
Moreover, the distribution of $\murec_p$ is characterized by this property,
in the following sense: if a random permuton $\nu$ satisfy $\nu \stackrel{d}{=}\Phi_p(\nu)$,
then $\nu \stackrel{d}{=}\murec_p$.
\end{prop}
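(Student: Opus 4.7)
The plan is to derive the fixed-point equation directly from the dynamics of the recursive construction, then handle uniqueness by an iteration argument.

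First I would focus on the very first inflation. The step from $\sigma^{(1),p}=1$ to $\sigma^{(2),p}$ produces either $12$ or $21$ according to a $p$-biased sign $S_1$. From that step on, I label every point of $\sigma^{(n),p}$ by \emph{left} or \emph{right} according to which of the two points of $\sigma^{(2),p}$ it descends from. Because an inflation replaces a point by two consecutive (in value and in position) points carrying the same label, the left and right blocks remain spatially separated for all $n$, and their relative arrangement is determined by $S_1$, exactly as in the definition of $\otimes_{(u,S)}$. Furthermore, conditionally on the numbers $(N_L(n), N_R(n))$ of left and right points, the restrictions of $\sigma^{(n),p}$ to the two blocks are independent and distributed as recursive separable permutations with parameter $p$ of sizes $N_L(n)$ and $N_R(n)$, since within each block the dynamics replicate the original process.

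Next I would identify the limit of $N_L(n)/n$ via a Pólya urn: the process $(N_L(n), N_R(n))$ starts from $(1,1)$, and at each step a point is sampled uniformly and a same-label copy is added, which is the classical two-color urn with identity replacement matrix. By a standard computation, $N_L(n)/n$ converges a.s.~to a random variable $U$ uniform on $[0,1]$, independent of $S_1$ and of the two block subprocesses conditionally on sizes. Applying \cref{thm:convergence} to each block yields two independent copies $\bm\mu_0, \bm\mu_1$ of $\murec_p$ as the a.s.~permuton limits of the rescaled blocks. Combining this with $N_L(n)/n \to U$ and the geometric arrangement dictated by $S_1$, the empirical permuton $\mu_{\sigma^{(n),p}}$ converges a.s.~to $\bm\mu_0 \otimes_{(U,S_1)} \bm\mu_1$. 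Uniqueness of almost sure limits forces $\murec_p = \bm\mu_0 \otimes_{(U,S_1)} \bm\mu_1$ a.s., which is exactly the distributional identity $\murec_p \stackrel{d}{=} \Phi_p(\murec_p)$.

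For the uniqueness clause, suppose a random permuton $\bm\nu$ satisfies $\bm\nu \stackrel{d}{=} \Phi_p(\bm\nu)$. Iterating the identity $k$ times realizes $\bm\nu$ in distribution as a gluing of $2^k$ i.i.d.~copies of $\bm\nu$ along a random binary tree of depth $k$ whose internal nodes carry independent uniform cuts and independent $p$-biased signs. The leaves are rescaled into squares whose side lengths are products of $k$ i.i.d.~factors of the form $U$ or $1-U$ for $U$ uniform on $[0,1]$; the maximum leaf side tends to $0$ almost surely as $k \to \infty$, so any choice of random permuton at the leaves contributes an asymptotically negligible perturbation in a metric compatible with weak convergence of measures. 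Performing the same construction with $\murec_p$ at the leaves produces, at each level $k$, a permuton distributed as $\murec_p$ (by the first part). Coupling the two constructions on the same tree and passing to the limit gives $\bm\nu \stackrel{d}{=} \murec_p$.

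The main obstacle will be making rigorous, in the first part, the passage from convergence of each block to convergence of the entire permuton: one must set up a coupling in which the two blocks of $\sigma^{(n),p}$ are realized as genuine recursive separable processes of sizes $N_L(n)$ and $N_R(n)$, both tending a.s.~to $\infty$, apply \cref{thm:convergence} jointly and independently to each, and verify that the simultaneous rescaling governed by $U$ and $S_1$ transfers the a.s.~limits of the two blocks into an a.s.~limit of the glued empirical measure. The uniqueness step is essentially a contraction on the space of random permutons, but still requires a careful choice of metric to ensure that the vanishing leaves indeed imply convergence of distributions.
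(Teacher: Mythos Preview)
Your argument for the first part is essentially the paper's: both trace the descendants of the two points of $\sigma^{(2),p}$, observe that each block evolves as an independent recursive separable permutation process, and use the classical P\'olya urn to get a uniform limiting proportion~$U$. The one difference is that the paper stays at the level of laws: it proves, for each fixed $n$, the identity
\[
\Law(\sigma^{(n),p}) = p\,\Law\big(\tau^{(I),p} \oplus \rho^{(n-I),p}\big) + (1-p)\,\Law\big(\tau^{(I),p} \ominus \rho^{(n-I),p}\big)
\]
with $I$ uniform on $\{1,\dots,n-1\}$ (\cref{prop:self_similarity_discrete}), and then passes to the limit in distribution. This neatly avoids the obstacle you correctly flag, namely coupling the random-size blocks to genuine copies of the process and transferring a.s.~convergence through the gluing. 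Your route would yield the stronger a.s.~identity $\murec_p = \bm\mu_0 \otimes_{(U,S_1)} \bm\mu_1$, but the paper's distributional shortcut gets to $\stackrel{d}{=}$ with less bookkeeping.

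For uniqueness, the paper takes a shorter path: it shows directly that $\Phi_p$, viewed as a self-map of $\mathcal M_1(\mathcal P)$ equipped with the first Wasserstein distance, is a contraction with Lipschitz constant at most $2/3$ (\cref{lem:PhiP_contraction}), via the pointwise bound
\[
d_W\big(\mu_0 \otimes_{(U,S)} \mu_1,\ \nu_0 \otimes_{(U,S)} \nu_1\big) \le U^2\, d_W(\mu_0,\nu_0) + (1-U)^2\, d_W(\mu_1,\nu_1)
\]
and $\mathbb E[U^2+(1-U)^2]=2/3$; Banach's fixed-point theorem then concludes. Your depth-$k$ coupling is correct and is really the same contraction unrolled (the $\ell^2$ leaf contributions are exactly the iterated $U^2$ factors), but it requires the extra step of controlling the maximum leaf side over the $2^k$ leaves, whereas the one-step Lipschitz bound handles everything at once.
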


\subsubsection{Expected pattern densities}
\label{ssec:pattern_densities_intro}
We recall that if $\pi$ is a pattern (i.e.~a permutation) of size $k$
and $\mu$ a permuton,
we can define the random permutation $\Sample(\mu;k)$
and the pattern density $\dens(\pi,\mu)$ of $\pi$ in $\mu$ as follows.
Let $(x_i,y_i)$ be i.i.d.~points in $[0,1]^2$ with distribution $\mu$.
We reorder them as $(x_{(1)},y_{(1)})$,\dots, $(x_{(k)},y_{(k)})$
such that $y_{(1)} < \dots < y_{(k)}$.
Then there exists a unique (random) permutation $\tau$ such that
$ x_{(\tau_1)} < \dots < x_{(\tau_k)}$.
This random permutation $\tau$ is denoted $\Sample(\mu;k)$.
 We also write 
 $$\dens(\pi,\mu)=\mbb P\big(\Sample(\mu;k)=\pi\big).$$

These functionals plays a key role in the theory of permutons.
In particular, convergence of permutons is equivalent to convergence of all pattern
densities, see \cite{MR2995721}.
Also the distribution of a random permuton is uniquely determined
by its expected pattern densities \cite[Proposition~2.4]{bassino2020universal}. 
The next proposition provides combinatorial descriptions of these
expected pattern densities in the case of the recursive separable permuton.

To state it, we introduce some terminology.
Let $\pi$ and $\sigma$ be two permutations of respective sizes $k$ and $\ell$. 
Their \emph{direct sum} and \emph{skew sum} 
are the permutations of size $k + \ell$ defined in one-line notation as follows
\begin{align*}
\pi \oplus \sigma = & \pi_1 \ldots \pi_k (\sigma_1 +k) \ldots (\sigma_\ell +k) ; \\
 \pi \ominus \sigma = & (\pi_1+\ell) \ldots (\pi_k+\ell) \sigma_1 \ldots \sigma_\ell. 
\end{align*}
Examples with $k=3$ and $ \ell=2$ are provided in Figure~\ref{fig:sum_and_skew}.
They illustrate the graphical interpretation of these operations
on permutation diagrams.
\begin{figure}[t]
    \[
      \includegraphics[width=6cm]{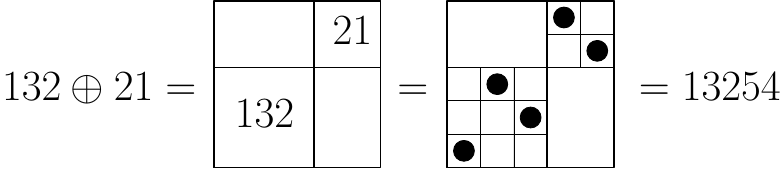} \quad 
      \includegraphics[width=6cm]{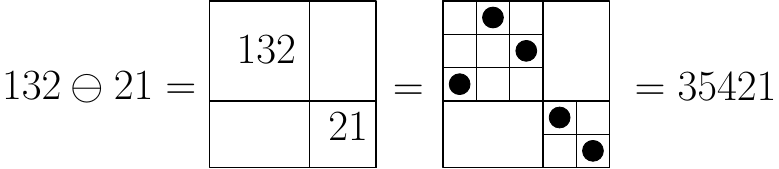}
    \]
\caption{Direct sum and skew sum of permutations.
\label{fig:sum_and_skew}}
\end{figure}

In the following, we consider rooted (complete) binary trees,
meaning that every internal node has exactly two ordered children.
Additionally, internal nodes are labeled with numbers from $1$ to some $k$ 
(where each integer in this range is used exactly once). 
The tree is said to be increasing
if labels are increasing on any path from the root to a leaf.
Finally, each internal node carries a decoration, which is either $\oplus$ or $\ominus$.
To such a tree $T$, we associate a permutation $\si=\Perm(T)$ as follows 
(an example of a rooted increasing binary tree and the associated permutation
is given in Figure~\ref{fig:tree_to_permutation}).
\begin{figure}[t]
\[      \includegraphics[scale=.7]{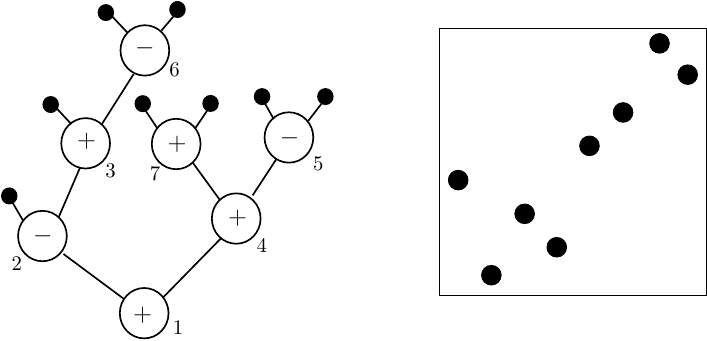}  \]
\caption{A rooted increasing binary tree and the associated permutation.
The permutation associated to the left subtree of the root is $4132$,
the one associated to its right-subtree $1243$. Since the root has decoration $\oplus$,
the permutation associated to the whole tree is $4132 \oplus 1243$,
which is equal to $41325687$.
\label{fig:tree_to_permutation}}
\end{figure}
\begin{itemize}
  \item If $T$ is reduced to a single leaf, then $\si$ is the one-element permutation $1$.
  \item Otherwise, the root of $T$ has two (ordered) children and we call $T_1$ and $T_2$ 
    the subtree rooted at these children. Let $\si_1$ and $\si_2$ be the permutations
    associated with $T_1$ and $T_2$.
    Then we associate $\si_1 \otimes \si_2$ with $T$,
    where $\otimes$ is the decoration of the root of $T$ ($\otimes  \in \{\oplus,\ominus\}$).
\end{itemize}
We note that $\Perm(T)$ does not depend on the labeling of the internal nodes of $T$
(only on the shape of $T$ and on the decorations of its internal nodes).
By construction, $\Perm(T)$ is always a separable permutation.
A separable permutation $\si$ is in general associated with more than one tree $T$.
We note that, except for the labeling of internal nodes of $T$,
 this is a standard construction in the theory of separable permutations,
see, e.g., \cite{bose1998pattern}.

\begin{prop}
\label{prop:expected_densities_murec}
For any pattern $\pi$ of size $n$,
we have
\[\mbb E\big[\dens(\pi,\murec_p) \big] = \mbb P\big[\sigma^{(n),p} =\pi \big] = \frac{N_{inc}(\pi)}{(n-1)!}\, (1-p)^{\des(\pi)}\, p^{n-1-\des(\pi)},  \]
where $\des(\pi)$ is the number of descents in $\pi$ and $N_{inc}(\pi)$
the number of increasing binary trees $T$ such that $\Perm(T)=\pi$.
\end{prop}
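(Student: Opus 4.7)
The statement contains two identities that are proved by quite different methods. My plan is to treat the combinatorial formula first via a reformulation of the inflation process as the growth of a random increasing binary tree, and then to prove the first equality by induction on $n$ using the self-similarity of $\murec_p$ from Proposition \ref{prop:self_similarity}.

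For the combinatorial formula, the key observation is that the inflation procedure producing $\sigma^{(n),p}$ admits an equivalent description as the random growth of a decorated increasing binary tree $T_n$: start with a single leaf; at each step $i = 1, \dots, n-1$, pick uniformly one of the $i$ current leaves, replace it by an internal node labeled $i$ carrying two new leaves, and decorate the new node with $\oplus$ with probability $p$, otherwise $\ominus$, independently. Under the natural bijection between leaves of the tree and positions of the permutation, one checks that $\sigma^{(n),p} = \Perm(T_n)$, matching increasing (resp.~decreasing) inflations with $\oplus$- (resp.~$\ominus$-)decorated nodes. Since the uniform choices are independent across steps, each increasing binary tree with prescribed shape and decorations appears with probability $\prod_{i=1}^{n-1} 1/i = 1/(n-1)!$. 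A short induction on trees, using $\des(\sigma \oplus \tau) = \des(\sigma) + \des(\tau)$ and $\des(\sigma \ominus \tau) = \des(\sigma) + \des(\tau) + 1$, shows that if $\Perm(T) = \pi$ then $T$ has exactly $\des(\pi)$ nodes decorated $\ominus$ and $n-1-\des(\pi)$ nodes decorated $\oplus$. Summing the weight $\frac{1}{(n-1)!} p^{n-1-\des(\pi)} (1-p)^{\des(\pi)}$ over all trees $T$ with $\Perm(T) = \pi$ yields the claimed formula.

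For the first equality, I prove by induction on $n$ that $\Sample(\murec_p, n) \stackrel{d}{=} \sigma^{(n),p}$. The base case $n = 1$ is trivial. For the inductive step, I invoke the self-similarity $\murec_p \stackrel{d}{=} \bm\mu_0 \otimes_{(U,S)} \bm\mu_1$ and sample $n$ i.i.d.~points from the right-hand side. Given $U$, the number $K$ of points falling in the block supporting $\bm\mu_0$ is $\Bin(n, U)$; since $U$ is uniform on $[0,1]$, after integration $K$ is uniform on $\{0, 1, \dots, n\}$. On $\{0 < K < n\}$, the induced pattern decomposes as an $\oplus$- or $\ominus$-sum (according to $S$) of two independent samples of sizes $K$ and $n - K$, which by the inductive hypothesis have the laws of $\sigma^{(K),p}$ and $\sigma^{(n-K),p}$. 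On $\{K \in \{0, n\}\}$, all points lie in a single block and the induced pattern coincides with $\Sample(\murec_p, n)$ itself. Writing $X := \mathbb P[\Sample(\murec_p, n) = \pi]$, this gives a linear equation in $X$ that solves to
\[
X = \frac{1}{n-1} \sum_{k=1}^{n-1} \bigl( p \, \mathbb P[\sigma^{(k),p} \oplus \sigma^{(n-k),p} = \pi] + (1-p) \, \mathbb P[\sigma^{(k),p} \ominus \sigma^{(n-k),p} = \pi] \bigr).
\]
Exactly the same recursion holds for $\mathbb P[\sigma^{(n),p} = \pi]$, obtained by decomposing $T_n$ at its root: a direct count using $\binom{n-2}{\ell-1}(\ell-1)!(n-\ell-1)! = (n-2)!$ shows that the number of leaves in the left subtree of the root of $T_n$ is uniform on $\{1, \dots, n-1\}$, and conditional on this the two subtrees are independent uniform increasing binary trees of the appropriate sizes. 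Equating the two recursions closes the induction, and composing the two equalities gives the proposition.

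The main subtlety is the treatment of the boundary events $\{K = 0\}$ and $\{K = n\}$ in the self-similarity step: on these events the induction hypothesis does not apply directly since one subsample still has size $n$. Far from being an issue, these events force the self-referential equation in $X$, whose unique solution then matches the tree recursion. The identification of the inflation process with the random increasing binary tree model is conceptually clean, but requires care in matching $\oplus/\ominus$ decorations with increasing/decreasing inflations and in keeping track of the labelling of internal nodes.
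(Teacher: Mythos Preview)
Your argument is correct. The treatment of the combinatorial formula $\mathbb P[\sigma^{(n),p}=\pi]=\tfrac{N_{inc}(\pi)}{(n-1)!}(1-p)^{\des(\pi)}p^{n-1-\des(\pi)}$ is essentially identical to the paper's: both encode the inflation history as a random increasing decorated binary tree $T_n$ with $\Perm(T_n)=\sigma^{(n),p}$, observe that each fixed tree has probability $\tfrac{1}{(n-1)!}p^{\oplus(T)}(1-p)^{\ominus(T)}$, and use the fact that $\ominus(T)=\des(\Perm(T))$.

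For the first equality $\mathbb E[\dens(\pi,\murec_p)]=\mathbb P[\sigma^{(n),p}=\pi]$, however, you take a genuinely different route. The paper proves separately (\cref{prop consistency of permutations}) that $(\sigma^{(n),p})_{n\ge 1}$ is a \emph{consistent} family, and then invokes the general machinery of \cite[Proposition~2.9 and Theorem~2.5]{bassino2020universal}: any consistent family has a permuton limit $\bm\mu$ with $\Sample(\bm\mu,n)\stackrel{d}{=}\sigma^{(n),p}$, and \cref{thm:convergence} identifies $\bm\mu$ with $\murec_p$. Your argument instead uses the self-similarity of \cref{prop:self_similarity} and induction on $n$, handling the boundary events $K\in\{0,n\}$ via a self-referential equation in $X=\mathbb P[\Sample(\murec_p,n)=\pi]$, and matching the resulting recursion with the root decomposition of $T_n$ (which is essentially the discrete self-similarity of \cref{prop:self_similarity_discrete}, re-derived through your tree count $\binom{n-2}{\ell-1}(\ell-1)!(n-\ell-1)!=(n-2)!$). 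Your approach is more self-contained in that it avoids both the consistency proposition and the external references, at the cost of the slightly delicate boundary analysis; the paper's approach is shorter here but shifts the work to \cref{prop consistency of permutations}, which itself requires a nontrivial inductive argument.
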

If $\pi$ is not a separable pattern, then  $N_{inc}(\pi)=0$, implying
$\dens(\pi,\murec_p)=0$ a.s.

\begin{rema}
  The fact that $\dens(\pi,\murec_p)=0$ a.s.~for non-separable patterns $\pi$ implies that the distribution of $\murec_p$ is also singular with respect
  to that of the so-called skew Brownian permutons $\mu_{\rho,q}$ with parameter $(\rho,q)$ in $(-1,1) \times (0,1)$.
  Indeed, the latter satisfy $\dens(\pi,\murec_p)>0$ a.s., see \cite[Theorem 1.10]{borga2023baxter}.
\end{rema}

\subsubsection{The intensity measure}
Permutons are measures, so that random permutons are random measures.
Given a random measure $\bm\mu$, one can define its {\em intensity measure} $I \bm\mu$,
sometimes also denoted $\mathbb E \bm\mu$ as follows:
for any measurable set $A$ of the ground space, we have
$I \bm\mu (A) = \mathbb E [\bm\mu (A)]$.

Our next result is a simple description of the intensity measure of the random 
permuton $\murec_p$ in terms of beta distributions.
Recall that the distribution $\beta(a,b)$ with positive parameters $a$ and $b$ is given by
\[\frac{\Gamma(a+b)}{\Gamma(a)\, \Gamma(b)} x^{a-1} (1-x)^{b-1} \, dx.\]
\begin{prop}
\label{prop:intensity_beta}
The intensity measure $I \murec_p$ of the recursive separable permuton
is the distribution of
\[ (U, \, U X_p +(1-U) X'_p),\]
where $U$, $X_p$ and $X'_p$ are independent random variables in $[0,1]$,
with distribution $\Leb$,
$\beta(p,1-p)$ and $\beta(1-p,p)$ respectively.
\end{prop}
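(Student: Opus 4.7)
The strategy is to combine the self-similarity of Proposition~\ref{prop:self_similarity} with a classical stick-breaking representation of the Dirichlet distribution.

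Iterating the fixed-point equation $\murec_p \stackrel{d}{=} \Phi_p(\murec_p)$, a sample $(X,Y) \sim I\murec_p$ can be generated by an infinite descent: at each step $j\ge 1$, draw $(U_j, S_j, B_j)$ with $U_j \sim \Leb$, $\prob(S_j = \oplus) = p$, and $\prob(B_j = 0 \mid U_j) = U_j$, independently across $j$; then zoom into the sub-rectangle of the current square prescribed by $(B_j, S_j)$. Tracking the coordinates of the limit point yields the series representation
\[
(X,Y) \;=\; \sum_{j\ge 1} w_j\,(B_j, T_j), \qquad w_j := (1-V_j)\prod_{i<j} V_i,
\]
where $V_j := U_j\,\indicator_{B_j=0} + (1-U_j)\,\indicator_{B_j=1}$ is the relative side of the retained sub-square, and $T_j := B_j \oplus \indicator_{S_j = \ominus} \in \{0,1\}$ encodes whether the sub-square discarded at step $j$ lies below ($T_j=1$) or above ($T_j=0$) the target point in the $y$-direction.

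I would then establish two facts by direct computation: \emph{(i)} marginally $V_j$ has density $2v$ on $[0,1]$, that is $V_j \sim \mathrm{Beta}(2,1)$, so that $(w_j)_{j\ge 1}$ is a $\mathrm{GEM}(2)$ stick-breaking sequence; and \emph{(ii)} the pairs $(B_j, T_j)$ are i.i.d., independent of $(V_j)_{j\ge 1}$, and take the values $(0,0), (0,1), (1,0), (1,1)$ with respective probabilities $p/2,\,(1-p)/2,\,(1-p)/2,\,p/2$. Invoking the classical identity relating $\mathrm{GEM}(\theta)$-weighted sums of i.i.d.\ atoms to the Dirichlet distribution (Sethuraman's construction of the Dirichlet process), one obtains
\[
(W_{00}, W_{01}, W_{10}, W_{11}) \;\sim\; \mathrm{Dir}(p,\,1-p,\,1-p,\,p), \qquad W_{ab} := \sum_{j\,:\,(B_j,T_j) = (a,b)} w_j.
\]

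The conclusion follows from $X = W_{10} + W_{11}$ and $Y = W_{01} + W_{11}$ together with the aggregation and splitting properties of the Dirichlet distribution. Aggregation gives $X \sim \mathrm{Beta}(1,1) = \Leb$; splitting gives that, conditionally on $X = u$, the normalized vectors $(W_{10},W_{11})/u \sim \mathrm{Dir}(1-p, p)$ and $(W_{00}, W_{01})/(1-u) \sim \mathrm{Dir}(p, 1-p)$ are independent of each other and of $X$. Setting $X_p := W_{11}/X \sim \mathrm{Beta}(p, 1-p)$ and $X'_p := W_{01}/(1-X) \sim \mathrm{Beta}(1-p, p)$ then yields $Y = X\cdot X_p + (1-X)\cdot X'_p$, which is exactly the formula of the statement.

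The main obstacle is the bookkeeping needed to justify the series representation rigorously, and in particular to check that $(B_j, T_j)$ is independent of $V_j$: both $V_j$ and $B_j$ are built from the same pair $(U_j, B_j)$, so the independence has to be verified by explicitly writing out the joint law of $(V_j, B_j)$ and integrating $U_j$ out. Once this step is in place, the remainder of the argument consists only of standard Dirichlet manipulations.
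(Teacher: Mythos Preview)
Your approach is correct and genuinely different from the paper's. The paper does not use the abstract self-similarity of Proposition~\ref{prop:self_similarity} at all; instead it works directly with the explicit construction $\murec_p = (\,\cdot\,,\phi(\,\cdot\,))_\#\Leb$. It first observes that $I\murec_p$ is the law of $(U,\phi(U))$ for $U$ uniform, then splits $\phi(U)=Y_1+Y_2$ according to whether the contributing mass lies to the left or right of $U$. Each normalized piece $Y_1/U$ and $Y_2/(1-U)$ is shown, via a direct analysis of the sequence $(U_j,S_j)$, to satisfy a one-dimensional distributional fixed-point equation $\nu=\Psi_p(\nu)$ (resp.\ $\Psi_{1-p}$); uniqueness of the fixed point follows by a Wasserstein contraction argument, and the identification $\nu_p=\beta(p,1-p)$ is then checked by a bare-hands density computation. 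Your route trades this explicit analysis for the Sethuraman/Dirichlet machinery: the beta distributions fall out automatically from Dirichlet aggregation and splitting, with no need to guess the fixed point or verify it by hand, and the only input from the paper is the self-similarity statement. The cost is the reliance on the Sethuraman construction (which is standard but external), and the bookkeeping you flag to get the i.i.d.\ structure of $(V_j,B_j,T_j)$ right; your computation that $(B_j,T_j)$ is independent of $V_j$ with the claimed $(p/2,(1-p)/2,(1-p)/2,p/2)$ law is the crux and does go through, since both events $\{B_j=0\}$ and $\{B_j=1\}$ contribute density $v\,dv$ to $\{V_j\in dv\}$. The paper's approach is more self-contained; yours is shorter once the Dirichlet toolkit is granted, and gives a pleasant structural explanation for why beta distributions appear.
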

From this, we can get an explicit formula for the density of $I \murec_p$.
 \begin{coro}
 \label{corol:intensity_density}
 $I \murec_p$ is absolutely continuous
with respect to Lebesgue measure on $ [0, 1]^2 $
and has density
\[\frac{1}{\Gamma(p)^2\Gamma(1-p)^2} \,
\int_{\max(x+y-1,0)}^{\min(x,y)}\frac{dz}{z^{1-p}(x-z)^p(y-z)^p(1-x-y+z)^{1-p}}. \]
 \end{coro}
Fig.~\ref{fig:plot_intensity} shows 3D plots of the density
of  $I \murec_p$ for $p=0.5$ and $p=0.6$ (obtained with Mathematica).
We note that the density diverges for $x=y$ when $p \ge 1/2$ and for $x+y=1$ when $p \le 1/2$. This is different from the Brownian separable permuton case
where the density of the intensity measure diverges only in the corners (i.e.~when both $x$ and $y$ are $0$ or $1$; see \cite{maazoun2020BrownianPermuton} for an explicit formula for the density
in this case).
\begin{figure}
\[ \includegraphics[height=4cm]{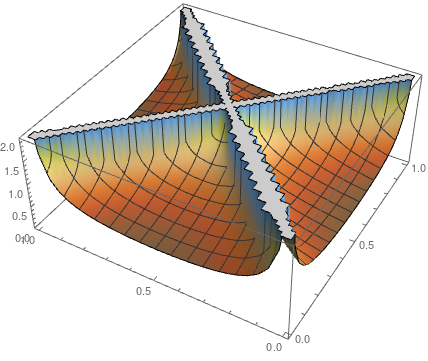} \qquad  \includegraphics[height=4cm]{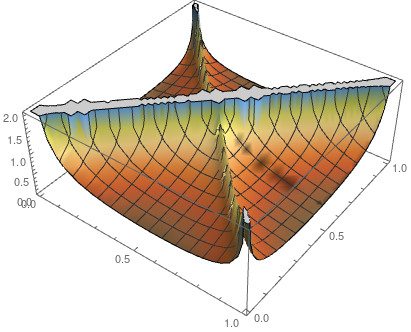} \]
\caption{3D plots of the density
of  $I \murec_p$ for $p=0.5$ (left) and $p=0.6$ (right).}
\label{fig:plot_intensity}
\end{figure}
\bigskip

We conclude this paragraph with a discussion presenting
the intensity measure of the recursive separable permuton
as a limit of a natural discrete object.
This interpretation is a motivation for computing the intensity measure,
and is not needed later in the article. 
Note also that this is not specific to the recursive separable permutation;
a similar discussion could be made for other models of permutations converging to 
random permutons.

For $n \ge 1$ and $N \ge 1$, let $\sigma^{(n),p}_1$, \dots,
$\sigma^{(n),p}_N$ be independent copies of the recursive random separable permutation $\sigma^{(n),p}$.
We then consider the average of the associated permutons:
\begin{equation}\label{eq:discrete_intensity}
  \mu^{(n),p}_N = \frac1N \sum_{i=1}^N \mu_{\sigma^{(n),p}_i}.
\end{equation}
This is a measure on $[0,1]^2$ with piecewise constant density
\[  g^{(n),p}_N(x,y) = \frac{n}N \sum_{i=1}^N \indicator \Big[ \sigma^{(n),p}_i\big(\lceil nx \rceil\big) = \lceil ny \rceil \Big].\]
When $N$ tends to infinity, $\mu^{(n),p}_N$ converges to $I\, \mu_{\sigma^{(n),p}}$,
the intensity measure of the random permuton associated to $\sigma^{(n),p}$.
This measure $I\, \mu_{\sigma^{(n),p}}$ in turn converges to $I\, \murec_p$ as $n$ tends to $+\infty$,
as a consequence of Theorem~\ref{thm:convergence}.
Therefore, for large $n$ and $N$ with $N \gg n$,
the empirical average measures $\mu^{(n),p}_N$ defined in \eqref{eq:discrete_intensity} 
can be seen as a discrete approximation of $I\, \murec_p$.
On Figure~\ref{fig:discrete_intensity}, we plot the density $g^{(n),p}_N$ for $n=200$, 
$N=10000$ and $p \in \{.5,.6\}$. The convergence to that of $I\, \murec_p$
(Fig.~\ref{fig:plot_intensity}) is plausible on the pictures.
\begin{figure}
  \begin{center}
   \[ \includegraphics[height=4cm]{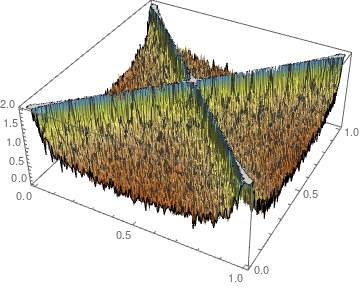} \qquad  \includegraphics[height=4cm]{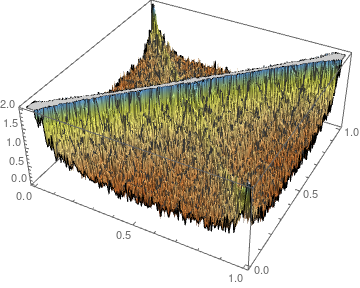} \]
  \end{center}
  \caption{3D plots of the density $g^{(n),p}_N$
  for $n=200$, 
$N=10000$ and $p$ being either $0.5$ (left) or $0.6$ (right).}
  \label{fig:discrete_intensity}
\end{figure}

\subsection{Analog results for cographs}
With a permutation $\sigma$ of size $n$,
it is standard to associate its inversion graph $G_\sigma$ on vertex set $\{1,\dots,n\}$.
By definition, $\{i,j\}$ is an edge of $G_\sigma$ if and only if it is an inversion in $\sigma$,
{\it i.e.}~\hbox{$(i-j)(\sigma(i)-\sigma(j))<0$}.
Inversion graphs of separable permutation are called {\em cographs}.
Cographs can alternatively be described as graphs avoiding the path $P_4$ on four vertices as induced subgraph, 
or as graphs that can be obtained starting from single
vertex graphs and iterating \enquote{disjoint union} and 
\enquote{taking the complement} operations.
We refer to the introduction of \cite{bassino2022cographs} for more background on cographs.

Uniform random cographs have recently been studied in
\cite{bassino2022linear,bassino2022cographs,stufler2021cographs}.
Considering the inversion graphs of  random recursive separable permutations
yields a natural Markovian model of random cographs.
It can be described directly on graphs, without going through permutations.

Namely,
we consider a sequence of random graphs $(G^{(n),p})_{n \ge 1 }$ starting from the unique graph with one vertex and defined recursively.
Given $G^{(n),p}$, a graph with $ n $ vertices, we obtain $G^{(n+1),p}$,
 a graph with $ n + 1 $ vertices, by the following procedure.
\begin{enumerate}
\item Let $v$ be a uniform random vertex of $G^{(n),p}$.
\item Add a new vertex $v'$ to $G^{(n),p}$, with the same set of neighbours as $v$.
\item With probability $1-p$, we connect $v$ and $v'$ with an edge.
\end{enumerate}

\begin{figure}
\[\includegraphics[height=3cm]{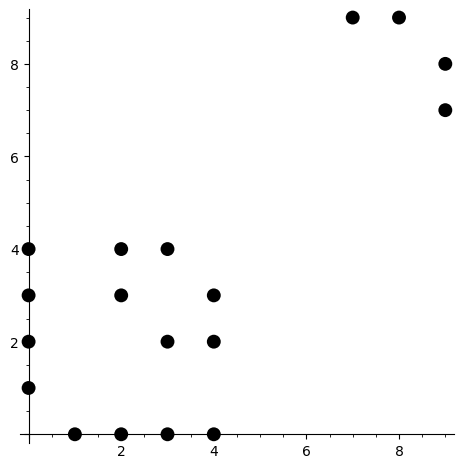}
\qquad  \includegraphics[height=3cm]{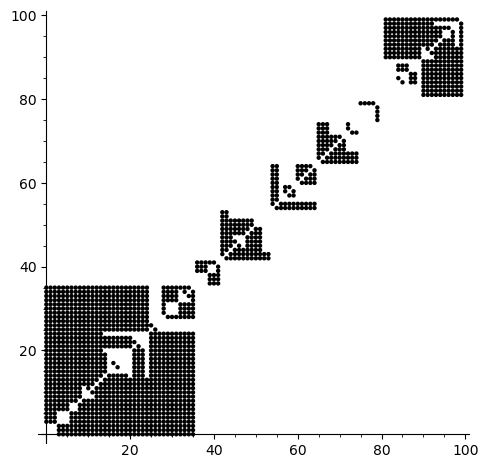}
\qquad  \includegraphics[height=3cm]{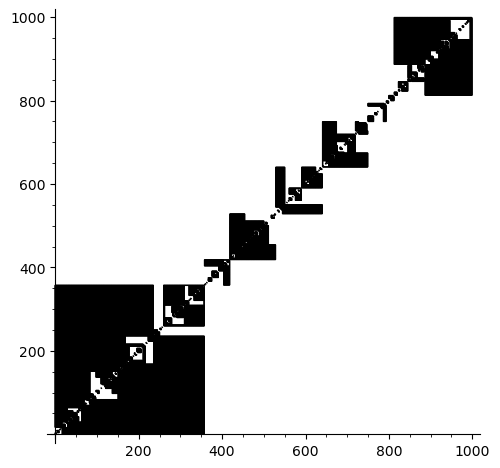}\]
\caption{A sample of graphs $G^{(10),{\, 0.5}}$, $G^{(100),{\, 0.5}}$,
$G^{(1000),{\, 0.5}}$ corresponding to the same realization of the process 
$(G^{(n),{\, 0.5}})_{n \ge 1}$.}
\label{fig:recursive_cographs}
\end{figure}
A simulation of this random graph process for $p=1/2$ is shown on Figure~\ref{fig:recursive_cographs}. Here graphs are represented by a collection of dots,
namely two dots at coordinates $(i,j)$ and $(j,i)$ for each edge $\{i,j\}$ in the graph
(in some sense, this is a pictorial version of the adjacency matrix of the graph).
We now state a convergence result for $G^{(n),p}$, which is the analogue of
Theorem~\ref{thm:convergence}. We assume the reader to be familiar with the notion of graphon
convergence.
\begin{theo}\label{thm:convergence_graphs}
The random graphs $ ( G^{(n),p} )_{ n \ge 1 } $ converge a.s.~to 
a random graphon,
which we call {\em recursive cographon (of parameter $p$)} and denote by $\Wrec_p$.
\end{theo}
A representative of the limiting graphon $\Wrec_p$ can be constructed using the random order~$\prec$ on $[0,1]$,
which we define later in Section~\ref{sec convergence}. Namely, for $x<y$ in $[0,1]$
we set
\[\Wrec_p(x,y)=\Wrec_p(y,x) = \begin{cases} 0 &\text{ if }x \prec y ;\\
1 &\text{ if }y \prec x .\end{cases}\]

Moreover, $\Wrec_p$ has the following properties, which are analogues
of Propositions~\ref{prop:singularity}, \ref{prop:self_similarity} and \ref{prop:expected_densities_murec}.
\begin{prop}\label{prop:singularity_graphs}
Let $p \ne q$ be fixed in $(0,1)$.
Then the distributions of the four random graphons $\Wrec_p$, $\Wrec_q$, $\WBr_p$
and $\WBr_q$ are pairwise singular,
where $\WBr_p$ is the Brownian cographon of parameter $p$ introduced in \cite{bassino2022cographs,stufler2021cographs}.
\end{prop}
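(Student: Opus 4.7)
The plan is to reduce \cref{prop:singularity_graphs} to its permuton analogue, \cref{prop:singularity}, through the inversion-graph construction. By definition, the recursive cograph $G^{(n),p}$ coincides with the inversion graph $G_{\sigma^{(n),p}}$ of the recursive separable permutation: the vertex-duplication step (possibly adding an edge) on the graph side is exactly the inversion-graph image of an inflation step on the permutation side. This identity passes to the limit: there is a continuous, hence Borel measurable, map $\Xi$ from permutons to graphons, characterized on induced-subgraph densities by
\[ t(H,\Xi(\mu)) \;=\; \sum_{\pi\,:\,G_\pi=H} \dens(\pi,\mu), \]
such that $\Wrec_p=\Xi(\murec_p)$ almost surely; the analogous identification $\WBr_p=\Xi(\muBr_p)$ was already used in \cite{bassino2022cographs} to derive the Brownian cographon from the Brownian separable permuton.

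Because $\Xi$ is many-to-one (for instance, a permuton and its vertical reflection have the same image), we cannot directly push the disjoint supports of \cref{prop:singularity} forward by $\Xi$. Instead, I propose to imitate the strategy that proves \cref{prop:singularity}, replacing pattern densities $\dens(\pi,\mu)$ by subgraph densities $t(H,W)$ as the separating invariants. The goal is to exhibit, for each pair among the four random graphons, a finite graph $H$ such that $t(H,\cdot)$ is almost surely equal to distinct deterministic values under the two laws being compared. For the four pairs that differ in parameter, the edge density $t(K_2,\cdot)$ already suffices: by \cref{prop:expected_densities_murec} applied to $\pi=21$, its expectation under $\Wrec_p$ equals $1-p$, and the same value is known for $\WBr_p$, so the two families are separated along the $p$-axis.

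The two remaining pairs $(\Wrec_p,\WBr_p)$ and $(\Wrec_q,\WBr_q)$ require a higher-order invariant. A natural candidate is the cherry density $t(P_3,\cdot)$, which on the permuton side equals $\dens(231,\cdot)+\dens(312,\cdot)$ and hence has expectation $p(1-p)$ under $\murec_p$ by \cref{prop:expected_densities_murec}; the expected value under $\muBr_p$, computable from the pattern-density formula for the Brownian separable permuton in \cite{bassino2018BrownianSeparable}, turns out to be different. The main obstacle is to upgrade these expectations to almost sure identities: the concentration of $t(H,\Wrec_p)$ at $\mbb E[t(H,\Wrec_p)]$ for $H\in\{K_2,P_3\}$ should follow from the fixed-point equation $\murec_p\stackrel{d}{=}\Phi_p(\murec_p)$ of \cref{prop:self_similarity} via a variance recursion whose unique solution is $0$, while the analogous concentration on the Brownian side is a known ergodic property of $\WBr_p$. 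Once these a.s.~identities are in place, the sets $\{W : t(K_2,W)=1-p,\; t(P_3,W)=p(1-p)\}$ and its three analogues provide pairwise disjoint supports for the four laws.
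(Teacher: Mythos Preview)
Your proposal contains a genuine gap: the claimed concentration of $t(K_2,\Wrec_p)$ (and of $t(P_3,\Wrec_p)$) at its mean is false, so no amount of care in the remaining steps will rescue the argument. Write $X=\dens(21,\murec_p)=t(K_2,\Wrec_p)$. The fixed-point equation of \cref{prop:self_similarity} gives
\[
X \stackrel{d}{=} U^2 X_0 + (1-U)^2 X_1 + 2U(1-U)\,\indicator[S=\ominus],
\]
with $X_0,X_1$ i.i.d.\ copies of $X$ independent of $(U,S)$. Centering at $m=\mathbb E[X]=1-p$ and taking second moments yields the recursion
\[
v \;=\; \mathbb E[U^4]\,v + \mathbb E[(1-U)^4]\,v + 4\,\mathbb E[U^2(1-U)^2]\,p(1-p)
   \;=\; \tfrac{2}{5}\,v + \tfrac{2}{15}\,p(1-p),
\]
whose unique solution is $v=\tfrac{2}{9}\,p(1-p)>0$. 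Thus the variance recursion you invoke has a \emph{positive} fixed point, and $t(K_2,\Wrec_p)$ is a genuinely random quantity. The same phenomenon occurs for $\WBr_p$ (pattern densities in the Brownian separable permuton are known to have nondegenerate laws), so edge or cherry densities cannot serve as deterministic separating invariants for any of the six pairs.

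The paper does not give a separate proof of \cref{prop:singularity_graphs}; it simply notes that the permuton argument transfers. Concretely, the separating functionals used in Section~4.4 for \cref{prop:singularity} are already functions of the inversion graph, hence descend to graphon space. For different parameters, the paper uses the asymptotic statistic $D_n(\mu)=\tfrac1{n-1}\mathbb E[\des(\Sample(\mu,n))\mid\mu]$ and shows $D_n\to 1-p$ a.s.\ via a fourth-moment bound (Lemma~4.7); since for separable $\pi$ the number of $\ominus$ decorations in any encoding tree equals $\des(\pi)$ and is an invariant of the unlabeled cograph $G_\pi$ (this is the quantity $n-1-Z(\cdot)$ of \cref{prop:expected_densities_Wrec}), $D_n$ factors through $\Xi$ and the same limit holds for $\Wrec_p$ and $\WBr_p$. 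For $\murec_p$ versus $\muBr_p$ at the same $p$, the paper uses the corner-mass events $\BL_\eps,\TL_\eps$ (Lemmas~4.8 and~4.9), which again have direct graphon analogues in terms of the explicit representative $\Wrec_p(x,y)=\indicator[y\prec x]$. If you want a self-contained proof, follow that route rather than attempting to force fixed-$n$ subgraph densities to be deterministic.
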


\begin{prop}
\label{prop:self_similarity_graphs}
Fix $p$ in $[0,1]$, and let $W_1$ and $W_2$ be independent copies of $\Wrec_p$.
Let also $U$ be a uniform random variable in $[0,1]$ and $S$ be a Bernoulli random variable of parameter $1-p$, independent from each other and from $(W_1,W_2)$.
We define a graphon $W$ by
\[ W(x,y) = \begin{cases}
W_1(x/U,y/U) &\text{ if }x,y \le U;\\
W_2((x-U)/(1-U),(y-U)/(1-U)) &\text{ if }x,y > U;\\
S &\text{ if }x  \le U <y\text{ or }y  \le U <x. 
\end{cases}\]
Then $W$ has the same law as $\Wrec_p$.
Moreover the law of $\Wrec_p$ is characterized by this property.
\end{prop}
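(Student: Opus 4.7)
The plan is to follow the two-step strategy of \cref{prop:self_similarity} but transported to graphons: first show that $\Wrec_p$ satisfies the distributional identity, and then show that the identity characterizes $\Wrec_p$ via a Banach-type contraction argument on distributions of graphons.

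For the first step, I would use the natural map $\Theta$ sending a separable permuton $\mu$ to its inversion-graph cographon $W_\mu$. This map is continuous between the spaces of permutons (with weak convergence) and graphons (with the cut pseudometric $\delta_\square$), and it intertwines the recursive constructions: the permuton inflation $\mu_0 \otimes_{(U,S)} \mu_1$ maps, under $\Theta$, exactly onto the graphon gluing described in the proposition, with the convention $S=\oplus$ (no inversions between blocks, hence no edges between the corresponding vertex sets) matching $S=0$ on the graphon side, and $S=\ominus$ matching $S=1$. The probabilities also match, since $\mbb P(S=\oplus)=p$ on the permuton side is the same as the probability of a ``no-edge'' block on the graphon side (the Bernoulli parameter being $1-p$). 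Moreover $\Theta(\murec_p)=\Wrec_p$ by applying $\Theta$ to the a.s.~convergence of \cref{thm:convergence}. Pushing the distributional identity of \cref{prop:self_similarity} through $\Theta$ yields the desired fixed-point identity for $\Wrec_p$.

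For uniqueness, let $\Psi_p$ denote the operator on distributions of graphons that sends a law $\mathcal L$ to the law of the glued graphon $W$ built from two independent $\mathcal L$-samples and independent $(U,S)$ as in the proposition statement. I would equip the space of distributions on the space of graphons (modulo measure-preserving rearrangements) with the Wasserstein distance $\bdW$ associated to $\delta_\square$; this is a complete metric space since the underlying graphon space is compact (Lov\'asz--Szegedy). Given $\mathcal L$ and $\mathcal L'$ and an optimal $\bdW$-coupling, I would couple $(W_1,W_1')$ and $(W_2,W_2')$ accordingly and reuse the same $(U,S)$ in both constructions. Since the cross blocks contribute nothing when $S=S'$, and since a graphon rescaled into the block $[0,U]^2$ contributes at most $U^2$ times its cut distance, one obtains the pointwise bound
\[\delta_\square(W,W')\le U^2\,\delta_\square(W_1,W_1')+(1-U)^2\,\delta_\square(W_2,W_2').\]
Taking expectation gives $\bdW(\Psi_p(\mathcal L),\Psi_p(\mathcal L'))\le \tfrac{2}{3}\,\bdW(\mathcal L,\mathcal L')$, so $\Psi_p$ is a $\tfrac{2}{3}$-contraction, and Banach's fixed-point theorem then forces $\Wrec_p$ to be the unique fixed point.

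The main obstacle, as I see it, is the cut-distance bookkeeping in the contraction estimate: $\delta_\square$ is defined as an infimum over measure-preserving rearrangements of the two graphons, so one must check that (near-)optimal relabellings of $W_1,W_1'$ and of $W_2,W_2'$ can be assembled into a single measure-preserving rearrangement of the composite graphons realising the claimed bound. This is routine but requires a careful block-wise construction. Once this is handled, existence (via $\Theta$) and uniqueness (via contraction) combine to give the proposition.
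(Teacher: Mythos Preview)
Your proposal is correct and matches the paper's approach. The paper does not give a detailed proof of this proposition, stating only that it is ``easily obtained, either by applying the inversion graph mapping to the permutation results, or by adapting the proofs''; your two steps are precisely a fleshed-out version of this---the first step pushes \cref{prop:self_similarity} through the permuton-to-graphon map, and the second step transports the contraction argument of \cref{lem:PhiP_contraction} to the cut metric, where the block-wise relabelling you describe does give the bound $\delta_\square(W,W')\le U^2\delta_\square(W_1,W_1')+(1-U)^2\delta_\square(W_2,W_2')$ since the cross-blocks cancel and each diagonal block contributes its cut norm scaled by the square of the block length.
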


For the last statement, we write $\dens(H,W)$ for the (induced) density of $H$
in a graphon~$W$.
We also recall that cographs can be encoded by decorated trees called
cotrees, see, e.g., \cite[Section 2.2]{bassino2022cographs}.
Here, we will consider cotrees where internal nodes are labeled
 by integers from $1$
to some $k$, 
and say that a cotree is increasing if labels are increasing from the root to the leaves. 
\begin{prop}
\label{prop:expected_densities_Wrec}
For any graph $H$ of size $n$,
we have
\[\mbb E\big[\dens(H,\Wrec_p) \big] = \mbb P\big[G^{(n),p} =H \big] = \frac{N_{inc}(H)}{(n-1)!}\, p^{Z(H)}\, (1-p)^{n-1-Z(H)},  \]
where $N_{inc}(H)$ the number of increasing binary cotrees $T$ encoding $H$,
and $Z(H)$ is the number of decorations $0$ in any binary cotree encoding $H$.
\end{prop}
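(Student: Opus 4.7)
The proof parallels that of Proposition~\ref{prop:expected_densities_murec} and splits into two independent parts: a combinatorial identity for $\mbb P[G^{(n),p}=H]$, and a sampling identity $\mbb E[\dens(H,\Wrec_p)]=\mbb P[G^{(n),p}=H]$. For the combinatorial identity, the key is to set up a bijection between runs of the recursive construction and ordered increasing binary cotrees with $\{0,1\}$-decorated internal nodes labeled $1,\ldots,n-1$ by time of creation. A trajectory selects, at each step $k=1,\ldots,n-1$, one of $k$ present vertices (contributing $1/k$) and a decoration (contributing $p$ if it is $0$ and $1-p$ if it is $1$). Under the bijection, the leaf chosen at step $k$ becomes internal node $k$, with a fixed convention assigning the two children. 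Hence each cotree $T$ with $Z(T)$ zero-decorations is produced with probability $\tfrac{1}{(n-1)!}\,p^{Z(T)}(1-p)^{n-1-Z(T)}$, and summing over the $N_{inc}(H)$ cotrees encoding $H$ yields the formula.

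For the sampling identity, the plan is to prove the distributional equality $\Sample(\Wrec_p;n)\stackrel{d}{=}G^{(n),p}$. This will follow from a \emph{consistency lemma}: for every $N\geq n$, the induced subgraph of $G^{(N),p}$ on a uniformly random $n$-subset of vertices has the same distribution (as a cograph isomorphism class) as $G^{(n),p}$. Granting this, the identity $\mbb E[\dens(H,G^{(N),p})]=\mbb P[G^{(n),p}=H]$ holds for every $N\geq n$, and one passes to the limit via the almost sure convergence $G^{(N),p}\to\Wrec_p$ of Theorem~\ref{thm:convergence_graphs} together with the continuity of induced pattern densities under graphon convergence. The consistency lemma itself reduces --- since edges among $\{1,\ldots,k\}$ are never modified after step $k$, so that the induced subgraph on the specific subset $\{1,\ldots,n\}$ is exactly $G^{(n),p}$ --- to showing that the isomorphism class of $G^{(N),p}$ is exchangeable in its vertex labels, or equivalently that deleting a uniformly random vertex from $G^{(N+1),p}$ yields $G^{(N),p}$. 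This can be established inductively on the cotree, as it amounts to the well-known consistency of uniformly grown binary trees with i.i.d.\ decorations on their internal nodes.

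The main obstacle is this consistency / vertex-exchangeability property; the combinatorial counting and passage to the limit are routine bookkeeping by comparison. A clean alternative, should the direct argument prove cumbersome, is to exploit the self-similarity of $\Wrec_p$ from Proposition~\ref{prop:self_similarity_graphs} to set up a recursion on $n$ for $\mbb P[\Sample(\Wrec_p;n)=H]$ by conditioning on the root decoration $S$ and the split point $U$, and to match this with the natural recursion for $\mbb P[G^{(n),p}=H]$ obtained by conditioning on the first step of the process. This mirrors how analogous identities for limit objects of Markovian models are typically handled and would presumably be the route taken in the permutation analog.
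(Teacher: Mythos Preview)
Your primary approach is correct and is precisely how the paper handles it: the paper states that Proposition~\ref{prop:expected_densities_Wrec} follows by adapting the proof of Proposition~\ref{prop:expected_densities_murec}, and that proof proceeds exactly via consistency (Proposition~\ref{prop consistency of permutations}) plus passage to the limit, together with the history-tree bijection for the combinatorial formula. One small correction: your guess that the permutation analog ``would presumably'' be done via self-similarity is off --- the paper uses the consistency route there too, so your first instinct already matches the paper.
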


All of these results are easily obtained, either by applying the inversion graph mapping
to the permutation results, or by adapting the proofs.
Since the space of graphons has no natural convex structure,
there is no natural notion of expectation of the random graphon $\Wrec$,
and Proposition~\ref{prop:intensity_beta} and Corollary~\ref{corol:intensity_density}
have no analogues for $\Wrec$.

\subsection{Outline of the paper} The remainder of this paper is organized as follows.
In Section~\ref{ssec:preliminaries}, we discuss some background material that is needed later. 
In Section~\ref{sec convergence}, we go through an explicit construction of the recursive separable permuton and prove Theorem~\ref{thm:convergence}.
In Section~\ref{sec:properties}, we investigate the properties of the recursive separable permuton and prove all of our other results.

\section{Background}
\label{ssec:preliminaries}

This section gathers some material needed in the rest of the paper.
The first two items (permutation patterns and the Wasserstein metric) consist of standard material. 
The last item (push-forward permutons) is more specific to this project.

\subsection{Permutation patterns.}
If $\sigma$ is a permutation of size $n$ (we simply write \enquote{permutation of $n$}
from now on) and $I$ a subset of $\{1,\dots,n\}$ with $k$ elements,
then the {\em pattern} induced by $\sigma$ on the set of positions $I$ is the unique permutation
$\tau=\pat_I(\sigma)$ of $k$ with the following property: writing $I=\{i_1,\dots,i_k\}$ with $i_1<\dots<i_k$,
we have, for all $1 \le g,h \le k$,
\[ \sigma(i_g) < \sigma(i_h) \ \Leftrightarrow \ \tau(g) < \tau(h).\]
In other words, $ \tau $ is obtained by considering the subsequence $\sigma(i_1)\,\sigma(i_2)\,\ldots\, \sigma(i_k)$
of $\sigma$ and replacing the smallest element by $1$, the second smallest by $2$, and so on.
For example, the pattern induced by $3\, 2\, 5\, 6\, 4\, 7 \,1$ on positions $\{2,3,5\}$
is $132$.

\subsection{Wasserstein metric.}
Some arguments in Sections~\ref{sec:SelfSimilarity} and \ref{sec:Intensity} use the notion
of {the} Wasserstein distance between probability distributions.
We briefly recall some main facts about it.

Let $X$ be a complete metric space with distance $d_X$. 
For each $p\ge 1$, consider the space $\mathcal M^p_1(X)$ of probability measures on $X$
with a finite $p$-th moment.
This space can be endowed with the so-called {\em $p$-th Wasserstein distance} 
(also called {the} optimal cost distance or {the} Kantorovich-Rubinstein distance):
\[ 
	d_{W,p}(\nu,\nu')^p
		\ldef 
			\inf_{ 
					\substack{ \bm X,\bm X': \\ \bm X \sim \nu, \, {\bm X'} \sim \nu' }
				} 
    			\mathbb E[ d_X(\bm X,{\bm X'})^p],
\]
where the infimum is taken over all pairs $(\bm X, {\bm X'})$ of random variables defined on the same probability space with distributions $\nu$ and $\nu'$, respectively.
{It is well-known that this defines} a metric on $\mathcal M^p_1(X)$;
see, e.g., \cite{clement2008wassertein}.

In this article, we will be interested in the cases $X=[0,1]$ and $X=[0,1]^2$.
In these cases, and more generally whenever $X$ is compact,
we have $\mathcal M^p_1(X)=\mathcal M_1(X)$ for all $p$.
Also, convergence {in} the Wasserstein metric is equivalent to the weak convergence of measures
(in general, on non necessarily compact spaces, it is equivalent to weak convergence
and convergence of the $p$-th moment).
In particular, it is sometimes fruitful to think of permuton convergence as convergence
for the (first) Wasserstein distance.

\subsection{Push-forward permutons.}
\label{ssec:push_forward}
Given a function $f:[0,1] \to [0,1]$, we can consider the measure $\mu_f:=( \, \cdot \,, f( \, \cdot \, ) )_{ \# }  \Leb $ on $[0,1]^2$, 
i.e.~the push-forward of the Lebesgue measure on $[0,1]$ by the map $x\mapsto (x,f(x))$.
The projection of $\mu_f$ on the first coordinate is always the Lebesgue measure on $[0,1]$,
while its projection on the second coordinate is $f_{ \# }  \Leb$.
Thus, if $f$ preserves the Lebesgue measure, then $\mu_f$ is a permuton.
Such permutons will be referred here as {\em push-forward} permutons.
Both the recursive and the Brownian separable permutons are push-forward permutons (for random functions $f$).

The permuton $\mu_\pi$ associated to a permutation is not a push-forward
permuton. We can, however, approximate it by a push-forward measure easily.
Given a permutation $\pi$ of $n$, 
let us define the function $f_{ \pi } 
		\colon
			[ 0, 1 ] \to [ 0, 1 ]
$
 by $f_\pi(0)=0$ and, for $x>0$,
$$
	f_{ \pi }( x )
		=
			\frac 	{
						\pi( \ceiling{ n x } )
					}{
						n
					}
		.
$$
We also write $ \hat{\mu}_{ \pi}=\mu_{f_\pi}
		=
			( \, \cdot \,, f_{ \pi }( \, \cdot \, ) )_{ \# }
			\Leb.
$
The measure $\hat{\mu}_{ \pi}$ is not a permuton (its projection on the $y$-axis is not uniform), but it resembles the permuton $\mu_{ \pi}$:
while $\mu_{ \pi}$ has, for each $i$, a mass $1/n$ uniformly distributed on the square
$[\frac{i-1}{n};\frac in] \times [\frac{\pi(i-1)}{n};\frac{\pi(i)}{n}]$, the measure  $\hat{\mu}_{ \pi}$
has the same mass distributed on the segment $[\frac{i-1}{n};\frac in] \times \big\{\frac{\pi(i)}{n}\big\}$.
This clearly implies
\begin{equation}
	\label{remark describing permutations as pushforwards}
	d_{ W, 1 }
    	\left(
        	\hat{\mu}_{ \pi },
        	\mu_{ \pi }
    	\right)
    		\le
    			\frac{ 1 }{ n }
\end{equation}
for any permutation $\pi$ of $n$.
It follows that, given a sequence of permutations $\pi^{ ( n ) } $ of increasing size,
 the sequences
$
	\{
		\hat{\mu}_{ \pi^{ ( n ) } }
	\}_{ n \ge 1 }
$
and
$
	\{
		\mu_{ \pi^{ ( n ) } }
	\}_{ n \ge 1 }
$
have the same limit points in $ \mc M_1( [ 0, 1 ]^2 ) $.

We end this section with a convergence criterium
for push-forward measures $\mu_f$,
which will be  used in the proof of our main result.
Since particular cases of push-forward permutons have been studied in the literature
\cite{bassino2018BrownianSeparable,borga2021skew,borga2022meanders},
it might also be useful in other contexts.
We also refer the reader to a paper of 
Bhattacharya and Mukherjee \cite{bhattacharya2017degree},
for a related result connecting pointwise convergence of random permutations seen
as functions,
and convergence of the associated random permutons.
\begin{prop}
	\label{prop characterization pushforward convergence}
	Let
	$ f, f_1, f_2, \ldots $ be measurable functions on $[ 0, 1 ]$ with values in $[0,1]$.
	Then the following statements are equivalent:
	\begin{enumerate}[ label = (\roman*) ]
		\item
		\label{claim some lp convergence}
		$
			f_n
				\to
					f
		$
		in $ L^p $ for some $ p \in [ 1, \infty ) $,

		\item
		\label{claim all lp convergence}
		$
			f_n
				\to
					f
		$
		in $ L^p $ for all $ p \in [ 1, \infty ) $,

		\item
		\label{claim weak convergence of pushforwards}
		$
			( \, \cdot \,, f_n( \, \cdot \, ) )_{ \# } 
			\Leb
				\to
        			( \, \cdot \,, f( \, \cdot \, ) )_{ \# } 
        			\Leb
		$
		weakly.

	\end{enumerate}
\end{prop}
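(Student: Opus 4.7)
The implication (ii)$\Rightarrow$(i) is immediate. For (i)$\Rightarrow$(ii), I would use the fact that all functions take values in $[0,1]$ on the probability space $([0,1],\Leb)$: for $q\ge p$ one has $|f_n-f|^q\le |f_n-f|^p$ (since $|f_n-f|\le 1$), so $\|f_n-f\|_q\to 0$; for $q\le p$, Jensen's inequality on a probability space gives $\|\cdot\|_q\le\|\cdot\|_p$, so again convergence transfers. This reduces the problem to showing (ii)$\Leftrightarrow$(iii).

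For (ii)$\Rightarrow$(iii), I would fix a continuous $\phi:[0,1]^2\to\mathbb R$ and rewrite the tested integrals as $\int_0^1\phi(x,f_n(x))\,dx$ and $\int_0^1\phi(x,f(x))\,dx$. Since $[0,1]^2$ is compact, $\phi$ is uniformly continuous, so given $\eps>0$ there is $\delta>0$ with $|u-v|<\delta\Rightarrow|\phi(\cdot,u)-\phi(\cdot,v)|<\eps$. Because $L^1$-convergence implies convergence in measure, $\Leb\big(\{x:|f_n(x)-f(x)|\ge\delta\}\big)\to 0$. Splitting the integral over this set and its complement yields
\[
\left|\int_0^1\phi(x,f_n(x))\,dx-\int_0^1\phi(x,f(x))\,dx\right|\le \eps+2\|\phi\|_\infty\,\Leb\big(\{|f_n-f|\ge\delta\}\big),
\]
which proves weak convergence.

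The main obstacle is (iii)$\Rightarrow$(i), since weak convergence only sees continuous test functions whereas $f$ is merely measurable. My plan is to approximate $f$ in $L^2([0,1])$ by a continuous $g:[0,1]\to[0,1]$ (using density of continuous functions, truncating to $[0,1]$ if needed, which cannot worsen the approximation). Then $\phi(x,y):=(y-g(x))^2$ is continuous on $[0,1]^2$, so assumption (iii) gives
\[
\int_0^1(f_n(x)-g(x))^2\,dx\ \longrightarrow\ \int_0^1(f(x)-g(x))^2\,dx.
\]
Taking square roots, $\|f_n-g\|_2\to\|f-g\|_2$, and by the triangle inequality $\|f_n-f\|_2\le\|f_n-g\|_2+\|g-f\|_2$, so $\limsup_n\|f_n-f\|_2\le 2\|f-g\|_2$. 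Since $\|f-g\|_2$ can be made arbitrarily small, this yields $f_n\to f$ in $L^2$, establishing (i) with $p=2$.

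The key trick, and the only place where a non-routine idea is needed, is the choice of the test function $(y-g(x))^2$ with $g$ continuous and close to $f$ in $L^2$: this transports the weak convergence into a quantitative $L^2$ estimate involving $f$, after which the triangle inequality finishes the job.
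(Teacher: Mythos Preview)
Your proof is correct. The equivalence (i)$\Leftrightarrow$(ii) and the implication (ii)$\Rightarrow$(iii) are handled in essentially the same spirit as the paper (the paper tests against products $u(x)v(y)$ with $v$ Lipschitz rather than using uniform continuity and convergence in measure, but this is a cosmetic difference).

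The interesting divergence is in (iii)$\Rightarrow$(i). The paper first observes that, by a density argument, the convergence $\int u(x)\big(v(f_n(x))-v(f(x))\big)\,dx\to 0$ extends from continuous $u$ to all $u\in L^1[0,1]$; it then plugs in $u_1\equiv 1$, $v_1(y)=y^2$ and $u_2=f$, $v_2(y)=y$ to expand $\|f_n-f\|_2^2=\int(f_n^2-f^2)-2\int f(f_n-f)$ and conclude directly. Your route instead keeps the test function genuinely continuous by replacing $f$ with a continuous $L^2$-approximant $g$ and testing against $(y-g(x))^2$, then closes with a triangle-inequality $\varepsilon$-argument. Your approach avoids the (easy but slightly implicit) step of enlarging the test class, at the cost of an extra approximation layer; the paper's approach yields $\|f_n-f\|_2\to 0$ in one stroke once that enlargement is granted. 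Both are clean and of comparable length.
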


\begin{proof}
{Letting $U$ be a uniform random variable in $[0,1]$ on some probability space $\Omega$,
the convergence of $f_n$ to $f$ in $L^p([0,1])$ is equivalent to
the convergence of the random variables $f_n(U)$ to $f(U)$ in $L^p(\Omega)$.
Since all $f_n(U)$ and $f(U)$ are uniformly bounded,
(i) is equivalent to (ii), and they are both equivalent the weak convergence of $f_n(U)$ to $f(U)$.}

{It remains to show the equivalence with (iii). If (iii) holds, the pair $(U,f_n(U))$ converges weakly to $(U,f(U))$.
Restricting to the second coordinates, we know that $f_n(U)$
converges weakly to $f(U)$, implying (i).}

{Conversely, if (i) holds, $f_n(U)$ converges weakly to $f(U)$,
and thus $(U,f_n(U))$ converges weakly to $(U,f(U))$,
{\em i.e.}~(iii) holds. This completes the proof of the proposition.}
\end{proof}

\subsection{{Permutations and pairs of total orders}}
\label{ssec:permutation_pair_orders}
{In some constructions, it will be convenient to see a permutation
as a finite set endowed with two total orders.
This is a standard point of view in the context of permutation patterns,
see, {\em e.g.}, \cite{albert2020logic}.}

Let us explain how to associate a permutation with a triple
$(E,<,\prec)$, where $ E $ is a finite set equipped with two total orders $<$ and $\prec$.
Indeed, using the first order, we write $E=\{x_1,\dots,x_k\}$ where $k=|E|$ and $x_1<\dots<x_k$.
Then there exists a unique permutation $ \sigma $ satisfying
\begin{equation}
	\label{defn permutation from a set}
	\sigma( j ) < \sigma( k )
		\quad
		\Longleftrightarrow
		\quad
			x_j \prec x_k.
\end{equation}

We denote this permutation by $\sigma= \Perm( E,<,\prec ) $.
For example, we choose \hbox{$E=\{a,b,c,d,e\}$}, with $a<b<c<d<e$ and $d \prec b\prec e\prec c \prec a$,
then $\sigma=52413$. To visualize this construction,
we can represent elements of $E$ as points in the plane, 
so that $<$ compares the $x$-coordinates, while $\prec$ compares the $y$-coordinates.
Then $E$ ressembles the diagram of $\sigma$, see Figure~\ref{fig:example_perm}.
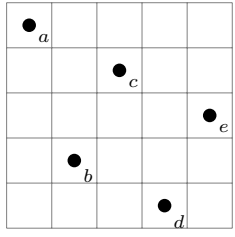
\begin{figure}
\[\begin{tikzpicture}[scale=0.6]
  \draw[gray,very thin] (0,0) grid (5,5);
  \fill (0.5,4.5) circle (0.15) node[below right =-0.1] {\scriptsize $a$};
  \fill (1.5,1.5) circle (0.15) node[below right =-0.1] {\scriptsize $b$};
  \fill (2.5,3.5) circle (0.15) node[below right =-0.1] {\scriptsize $c$};
  \fill (3.5,0.5) circle (0.15) node[below right =-0.1] {\scriptsize $d$};
  \fill (4.5,2.5) circle (0.15) node[below right =-0.1] {\scriptsize $e$};
\end{tikzpicture}\]
\caption{A set of points $E=\{a,b,c,d,e\}$; comparing $x$-coordinates and
$y$-coordinates yield two orders $<$ and $\prec$ on $E$.
The associated permutation $\sigma= \Perm( E,<,\prec )$ is $52413$.}
\label{fig:example_perm}
\end{figure}

Taking patterns is simple with this viewpoint.
If $E=\{x_1,\dots,x_k\}$ is as above (in particular, assuming $x_1<\dots<x_k$)
and if $I$ is a subset of $\{1,...,k\}$,
then 
\[\pat_I \big(  \Perm( E,<,\prec ) \big) =  \Perm \big(\{x_i,i \in I\},<,\prec\big). \]

Finally, $\sigma=\Perm( E,<,\prec )$ is also given by an explicit formula.
As above, let $x_k$ be the $k$-th smallest
element in $E$ for the first order $<$.
Then, if $x_k$ is the $\ell$-th smallest element for the order $\prec$, we have
\begin{equation}
  \sigma(k) = \ell = 1+\sum_{x \in E} \bm 1[x \prec x_k].
  \label{eq formula sigma}
\end{equation}

\section{Construction and convergence}
\label{sec convergence}

In this section, we construct the {recursive separable permutations} and the recursive separable permuton on a common probability space and establish the almost sure convergence of the permutations to the permuton. 

Fix $p$ in $(0,1)$.
Throughout the section, we consider two independent random i.i.d.~sequences $(U_j)_{j \ge 1}$ and $(S_j)_{j \ge 1}$, where the $U_j$ are uniform in $[0,1]$
and the $S_j$ are random signs  in $ \{ \oplus, \ominus \} $ with $\mathbb P( S_j = \oplus ) = p $.
For convenience, the signs $ \oplus $ and $ \ominus $ will often be regarded as $ 1 $ and $ -1 $, respectively, and we set $ U_0 = 0 $ and $ U_{ - 1 } = 1 $.

To illustrate our construction, we sampled sequences $(U_j)_{j \ge 1}$ and $(S_j)_{j \ge 1}$ as above (setting $p=1/2$),
and used them in examples throughout the section. The first sampled values (rounded to 2 decimal digits for $U_j$) are
\begin{align} U_1=\ &0.72,\ U_2=0.82,\ U_3=0.54,\ U_4=0.13,\ U_5=0.60, \nonumber\\
  &S_1=\oplus,\ S_2=\ominus,\ S_3=\ominus,\ S_4=\ominus, S_5=\oplus.\label{Eq:Values_US}
\end{align}

\subsection{The general strategy}
\label{ssec:strategy}
To construct our permutations and permuton, we will construct a sequence of partial orders on [0, 1] that will give rise to a total order on [0, 1] in \enquote{the limit}.
These partial orders will then be identified as permutations 
(in a similar spirit as in Section~\ref{ssec:permutation_pair_orders}),
while the total order will lead to a permuton. 

With $ ( U_j )_{ j \ge 1 } $ and $ ( S_j )_{ j \ge 1 } $ defined as above, we define our first partial order by using $ U_1 $ to split $ [ 0, 1 ] $ into the two intervals $ [ 0, U_1 ) $ and $ [ U_1, 1 ] $ and declaring that every element in $ [ 0, U_1 ) $ is less than every element in $ [ U_1, 1 ] $ if $ S_1 = \oplus $ and that $ [ U_1, 1 ] $ is instead the smaller interval if $ S_1 = \ominus $. The second partial order is to be obtained from the first one by using $ U_2 $ to further split one of the intervals into two new intervals and declare that the leftmost interval is the smaller of the two if $ S_2 = \oplus $ and that it is the larger of the two if instead $ S_2 = \ominus $. We repeat this process to obtain a sequence of partial orders, at each step using the next uniform random variable to split an interval into two intervals and declaring the leftmost interval to be the smaller interval when the corresponding sign is $ \oplus $ and the larger interval otherwise. 
See Figure~\ref{fig:construction_order}.
\begin{figure}
  \begin{center}
    \includegraphics[width=\textwidth]{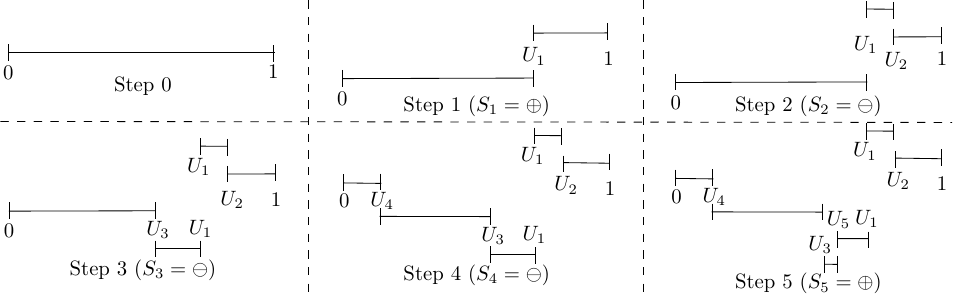}
  \end{center}
  \caption{Illustration of the first five partial orders associated with the sequences $(U_j)_{j \le 5}$ and $(S_j)_{j \le 5}$
  given in \eqref{Eq:Values_US}. An interval drawn at a higher level than another one means that its elements are larger.}
  \label{fig:construction_order}
\end{figure}
\smallskip

We show that these partial orders can be identified as the recursive separable permutations in Section \ref{ssec:construction_permutations}, but this can be seen intuitively. Indeed, the splitting of intervals into two and then assigning a sign to compare them mimics the inflation operation. Moreover, the distribution of the signs ensures that increasing adjacencies are created at the right rate. Finally, the properties of the uniform sequence ensure that the interval to be split at each step is chosen uniformly at random, as in our permutation model. Another motivation for the choice of the uniform sequence is the fact that the proportion of intervals to the left of $ U_1 $ must resemble the proportion of points in $ \sigma^{ ( n ), p } $ that were obtained from subdividing the leftmost point in $ \sigma^{ ( 2 ), p } $.
The number of such points has the same law as the number of white balls in the standard P\'olya urn process,
and its proportion is known to converge a.s.~to a uniform random variable in $[0,1]$.
\smallskip

The above partial orders naturally lead in the limit $n \to \infty$
to a total order on $[ 0, 1 ]$. %
 Thus we expect that the limiting permuton can be retrieved from this order. This retrieval is executed in Section~\ref{sec:construction_permuton},
as a natural extension of the construction of \cref{ssec:permutation_pair_orders}. 

\subsection{The total order and the limiting permuton}
\label{sec:construction_permuton} 
We start by offering an explicit formal description of the order described informally in the previous section.
Given $ x < y $ in $ [ 0, 1 ] $ such that 
$ 
	( x, y ] 
  		\cap
        	\{ U_j \}_{ j \ge 1 } 
        		\neq 
        			\emptyset 
        		,
$
we consider the minimal $ j \ge 1 $ such that 
$ 
	U_j
		\in 
			(x,y] 
		.
$
We set
\begin{equation}
 \begin{cases}
x \prec y &\text{ if } S_j = \oplus;\\
y \prec x &\text{ if } S_j = \ominus.
\end{cases}
\end{equation}
Alternatively, letting
\begin{align*}
	\mc I_{ x, y }
		& =
			\Big( \!
				\min( x, y ), \, \max( x, y ) 
			\Big]
		,
			\qquad
			x, y \in [ 0, 1 ]
		,
		\\
	i_{ x, y }
		& =
			\inf 
			\left\{ 
					j \ge 1 
				: 
					U_j \in \mc I_{ x, y } 
			\right\}
		,
			\qquad
			x, y \in [ 0, 1 ]
		,
\end{align*}

\noindent
and using the convention
$$
	S_\infty
		=
			0
		,
$$

\noindent
we can describe this relation concisely as 
$$
	x \prec y
		\qquad
		\Longleftrightarrow
		\qquad
			( y - x )
			S_{ i_{ x, y } }
				>
					0
				.
$$

Using the random relation $\prec$, we define a random function $ \phi \colon [ 0, 1 ] \to [ 0, 1 ] $ and a random measure on $ [ 0, 1 ]^2 $ by
\begin{align}
	\phi(x) 
		& = 
			\Leb\left( \{y \in [ 0, 1 ] : y \prec x\} \right)
		, \label{eq:def_phi}
		\\
	\murec_p
		& = \mu_\phi =
			( \, \cdot \,, \phi( \, \cdot \, ) )_{ \# }
			\Leb, \label{eq:def_mu_phi}
\end{align}
where, in the last equation, we use the notation $\mu_f$ of Section~\ref{ssec:preliminaries}.
{We note that \eqref{eq:def_phi} is a natural infinite version 
of \eqref{eq formula sigma}, while \eqref{eq:def_mu_phi} mimicks the way we associate a measure $\hat\mu_\pi$ with a permutation $\pi$ 
(see Section~\ref{ssec:push_forward}).}

In Proposition \ref{properties of phi} below, we will see that the following properties hold almost surely:
the relation $ \prec $ is a total order on $ [ 0, 1 ] $,
the function $ \phi $ is Lebesgue-preserving, 
and 
the measure $ \murec_p $ is a permuton.
We define the {\em recursive separable permuton (of parameter $p$)} as the random measure $ \murec_p $ on $[0,1]^2$.

\begin{rema}
  \label{rem:Brownian}
{  Our construction is inspired by the construction of the (biased) Brownian separable permuton $\muBr_p$ given in \cite{maazoun2020BrownianPermuton}.
  Indeed, the construction of $\muBr_p$ also goes through the construction of a random order $\prec$.
  However, instead of taking i.i.d.~uniform random variables $\{U_j\}_{j \ge 1}$, we consider
  a Brownian excursion $\exc$ on $[0,1]$ and attach signs to its local minima.
  We then define, for $x<y$ in $[0,1]$,
  \[ \begin{cases}
x \prec y &\text{ if } S_m = \oplus;\\
y \prec x &\text{ if } S_m = \ominus,
\end{cases} \]
 where $S_m$ is {\em the sign attached to the local minimum $m$ of $\exc$ on the interval $[x,y]$}. The rest of the construction, namely Eqs.~\eqref{eq:def_phi} and \eqref{eq:def_mu_phi}, is the same in the constructions of $\muBr_p$ and $\murec_p$.
  As said in the introduction, even though the constructions are similar,
  the resulting random permutons are different, and even more, their distributions
  are singular to each other (see Section~\ref{ssec:Singular_Distributions}). }
\end{rema}

\subsection{The Glivenko-Cantelli theorem and some consequences}
{For $\murec_p$ to be a permuton,
we need the sequence $(U_j)_{j \ge 1}$ to satisfy some good properties.
The fact that these properties hold a.s.~is essentially given by the well-known 
Glivenko-Cantelli theorem, which we now recall,
together with straightforward consequences.
To this end,}
we consider the following objects, for any $n \ge 1$:
\begin{itemize}
\item the ordered statistics of $ \{ U_{ - 1 }, U_0, \ldots, U_n \} $,
$$
	0=U_{ ( 0, n ) }
    	\le
        	U_{ ( 1, n ) }
        		\le
                	\ldots
                		\le
                        	U_{ ( n, n ) }
							\le
								U_{ ( n + 1, n ) }=1;
$$
\item 
the length of the largest interval in $ [ 0, 1 ] \setminus \{ U_1, \ldots, U_n \} $, 
$$
	\Delta_n
		=
			\max_{ 1 \le j \le n + 1 }
				U_{ ( j, \, n ) }
			-
				U_{ ( j-1, \, n ) }
		,
$$

\noindent
and

\item
the empirical measures
$$ 
	{P_n}
		= 
			\frac{1}{n} 
			\sum_{ j = 0 }^{ n - 1 }
				\delta_{ U_j },
$$
where $\delta_x$ is the Dirac measure at $x$. 
\end{itemize}

\noindent
In addition, we consider the event
\begin{equation}
	\label{defn almost sure set}
	\mc E
		=
			\Bigg
			\{
				( U_j )_{ j \ge 1 }
				\text{ are distinct},
				\,
			\sup_{ x \in [ 0, 1 ] }
			 	\bigg|
					x 
				- 
					\frac{1}{n} 
					\sum_{ j = 1 }^n 
						\indicator( U_j \le x )
				\bigg|
						\xrightarrow[ n \to \infty ]{}
							0
			\Bigg
			\}.
\end{equation}

\noindent
The following result summarizes the properties of these objects that will be useful to us.

\begin{prop}
	\label{prop consequences Glivenko-Cantelli}
	
	The event $ \mc E $ occurs almost surely and on $ \mc E $, the following statements hold (all limits are to be taken as $ n \to \infty $):
	\begin{enumerate}[ label = (\roman*) ]
		\item
		\label{claim convergence of ordered statistics}
		$
    		\sup_{ t \in [ 0, 1 ] }
    		 	\left|
    				U_{ ( \ceiling{ n t }, \, n - 1 ) } 
    			- 
    				t
    			\right|
    				\longrightarrow
    					0
					,
		$

		\item
		\label{claim convergence of maximal gap}
		$
			\Delta_n
				\to
					0
				,
		$

		\item
		\label{claim convergence of empirical measures over intervals}
		for every $ k \ge 1 $,
		$
    		\sup_{ J \in \mc A_k }
    		 	\left|
    				\Leb( J )
    			- 
    				{P_n}( J )
    			\right|
    				\longrightarrow
    					0
					,
		$
    	where $ \mc A_k $ consists of all subsets of $ [ 0, 1 ] $ that are a disjoint union of at most $ k $ intervals.
	\end{enumerate}
\end{prop}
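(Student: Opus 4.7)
The plan is to reduce all three assertions to the Glivenko--Cantelli theorem, which is essentially built into the definition of $\mc E$. Almost sure occurrence of $\mc E$ is immediate: pairwise distinctness of $(U_j)_{j\ge 1}$ has probability one since each pair has a continuous joint distribution, and the uniform convergence $\sup_{x \in [0,1]} |\frac{1}{n}\sum_{j=1}^n \indicator(U_j \le x) - x| \to 0$ is precisely the classical Glivenko--Cantelli theorem for i.i.d.~uniform samples on $[0,1]$. The three items then become deterministic statements on a realization for which $\delta_n := \sup_x |F_n(x) - x| \to 0$ (where $F_n$ denotes the empirical CDF of $U_1,\dots,U_n$) and the $U_j$ are distinct.

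For claim (i), the idea is the standard equivalence, when the target CDF is continuous, between uniform convergence of a distribution function and uniform convergence of its quantile inverse. Distinctness of the $U_j$ gives $F_{n-1}\bigl(U_{(\lceil nt\rceil,n-1)}\bigr) = \lceil nt \rceil/(n-1)$ exactly, whence
\[
  \bigl|U_{(\lceil nt\rceil,n-1)} - t\bigr| \le \delta_{n-1} + \Bigl|\tfrac{\lceil nt\rceil}{n-1} - t\Bigr| \le \delta_{n-1} + \tfrac{2}{n-1},
\]
uniformly in $t \in [0,1]$. Claim (ii) follows by contradiction: any interval $[a,b] \subseteq [0,1]$ of length at least $\varepsilon$ containing no $U_j$ for $1 \le j \le n$ forces $F_n(a) = F_n(b)$, hence $\sup_x|F_n(x) - x| \ge (b-a)/2 \ge \varepsilon/2$, contradicting $\delta_n \to 0$. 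For (iii), every $J \in \mc A_k$ decomposes into at most $k$ disjoint half-open intervals $(a_i, b_i]$, and $P_n(J)$ differs from $\tfrac{n-1}{n}\sum_i [F_{n-1}(b_i) - F_{n-1}(a_i)]$ by at most $O(1/n)$ (the discrepancy coming from the deterministic atom at $U_0 = 0$). Combining with $|F_{n-1}(b_i) - b_i| \le \delta_{n-1}$ and similarly at each $a_i$ gives
\[
  \sup_{J \in \mc A_k}\bigl|P_n(J) - \Leb(J)\bigr| \le 2k \delta_{n-1} + O(k/n) \longrightarrow 0.
\]

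The main obstacle is purely notational bookkeeping: one must translate carefully between Glivenko--Cantelli as stated in $\mc E$ (with $U_1,\dots,U_n$) and the auxiliary objects $P_n$ and $U_{(j,\,n-1)}$, which involve a shifted index set and an extra deterministic atom at $0$. Every such discrepancy is an $O(1/n)$ error that vanishes in the limit uniformly in the relevant parameters, so no probabilistic input beyond Glivenko--Cantelli is required.
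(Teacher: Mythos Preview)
Your proof is correct and follows essentially the same route as the paper: both reduce everything to the Glivenko--Cantelli statement encoded in $\mc E$, handle the index shifts between $F_n$, $F_{n-1}$, and $P_n$ as $O(1/n)$ bookkeeping, and treat (iii) by decomposing into intervals. The only minor difference is in (ii), where you argue by contradiction (a persistent gap forces $\delta_n$ bounded away from zero) while the paper deduces it from (i) via the triangle inequality $|U_{(j,n)} - U_{(j-1,n)}| \le |U_{(j,n)} - j/n| + 1/n + |(j-1)/n - U_{(j-1,n)}|$; both are equally elementary.
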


\begin{proof}
	The fact that $ \mc E $ occurs almost surely
	is a classical result known as the Glivenko-Cantelli Theorem.	
	We assume for the remainder of the proof that $ \mc E $ occurs.

	Since $ U_{ ( \ceiling{ n t }, \, n - 1 ) } \in [ 0, 1 ] $ for all $ t \in [ 0, 1 ] $ and $ n \ge 1 $, we can apply 
	the limit in (\ref{defn almost sure set}): as $ n \to \infty $, we have the convergence
	\begin{equation}\label{eq:U_close_FempU}
		\sup_{ t \in [ 0, 1 ] }
		 	\left|
				U_{ ( \ceiling{ n t }, \, n - 1 ) } 
			- 
				\frac{1}{n} 
				\sum_{ j = 1 }^n 
					\indicator( U_j \le U_{ ( \ceiling{ n t }, \, n - 1 ) } )
			\right|
				\longrightarrow
					0
				.
	\end{equation}
	
	\noindent
	Recalling that the sequence $ ( U_j )_{ j \ge 1 } $ contains distinct values, we obtain the estimate
	\begin{equation*}
    	\left|
			\frac{1}{n}
			\sum_{ j = 1 }^n
    			\indicator( U_j \le U_{ ( \ceiling{ n t }, \, n - 1 ) } )
		-
			t
		\right|
    				 \le 
                    		\frac{1}{n}
						+	
							\left|
								\frac{1}{n}
                    			\sum_{ j = 1 }^{ n - 1 }
                        			\indicator( U_j \le U_{ ( \ceiling{ n t }, \, n - 1 ) } )
                    		-
                    			t
                    		\right|
    			 \le 
                    		\frac{2}{n}
	\end{equation*}
	
	\noindent
	which shows that $ t $ can replace
	$
		\frac{1}{n} 
		\sum_{ j = 1 }^n 
			\indicator( U_j \le U_{ ( \ceiling{ n t }, \, n - 1 ) } )
	$
	in the convergence \eqref{eq:U_close_FempU}.
	This establishes \ref{claim convergence of ordered statistics}. 
	Writing
	$$
		\left|
			U_{ ( j, \, n ) }
		-
			U_{ ( j-1, \, n ) }
		\right|
			\le
        		\left|
        			U_{ ( j, \, n ) }
        		-
					\frac{ j }{ n }
        		\right|
				+
        		\left|
					\frac{ j }{ n }
				-
					\frac{ j - 1 }{ n }
        		\right|
				+
        		\left|
					\frac{ j - 1 }{ n }
				-	
        			U_{ ( j-1, \, n ) }
        		\right|
	$$
	
	\noindent
	for $ n \ge 1 $ and $ 1 \le j \le n + 1 $, we see that \ref{claim convergence of maximal gap} follows immediately from \ref{claim convergence of ordered statistics}.

	For $ x \in [ 0, 1 ] $, we can write
	\begin{align*}
		\left|
			\Leb( [ 0, x ] )
		- 
			{P_n}( [ 0, x ] )
		\right|
			& = 
            		\Bigg|
            			x
            		- 
						\frac{ 1 }{ n }
						\sum_{ j = 0 }^{ n - 1 }
							\indicator( U_j \le x )
            		\Bigg|
			\\
			& \le 
            		\Bigg|
            			x
            		- 
						\frac{ 1 }{ n }
						\sum_{ j = 1 }^n
							\indicator( U_j \le x )
            		\Bigg|
				+
					\frac{ 1 }{ n }
					\left|	
						\indicator( U_n \le x )
					-
						\indicator( U_0 \le x )
					\right|
	\end{align*}
	
	\noindent
	and apply the limit in (\ref{defn almost sure set}) to see that
	$$
		\sup_{ x \in [ 0, 1 ] }
    		\left|
    			\Leb( [ 0, x ] )
    		- 
    			{P_n}( [ 0, x ] )
    		\right|
				\longrightarrow
					0
	$$
	
	\noindent
	as $ n \to \infty $.
	Using the decomposition 
	$$
		\nu( ( x, y ] )
			=
				\nu( [ 0, y ] )
			-
				\nu( [ 0, x ] )
	$$
	
	\noindent
	for any probability measure $ \nu $ on $ [ 0, 1 ] $ and $ x < y $ in $ [ 0, 1 ] $, we then obtain the convergence	
	$$
		\sup_{ x < y }
    		\left|
    			\Leb( ( x, y ] )
    		- 
    			{P_n}( ( x, y ] )
    		\right|
				\longrightarrow
					0
	$$
	
	\noindent
	as $ n \to \infty $.
	Recalling that the sequence $ ( U_j )_{ j \ge 1 } $ contains distinct values, we have the bound
	\begin{equation*}
		P_n\{ x \}
			= 
					\frac{ 1 }{ n }
					\sum_{ j = 0 }^{ n - 1 }
						\indicator( U_j = x )
		 \le 
					\frac{ 1 }{ n }
	\end{equation*}

	\noindent
	for any $ x \in [ 0, 1 ] $. 
	In particular, we can add or remove the endpoints of the interval $ ( x, y ] $ above while maintaining the convergence.
	This establishes the result for $ k = 1 $.
	The extension to general $ k $ follows from the additivity property of measures.
\end{proof}

\subsection{Useful properties of the relation $\prec$ and the function $\phi$}

This section is dedicated to analyzing the relation $ \prec $ and the function $ \phi $. The goal is twofold:
not only will this allow us to establish that $\murec_p $ is a permuton a.s., but it will prepare us for the convergence argument.
We begin with basic properties of the relation $ \prec $.

\begin{lemm}
	
	\label{lem extrema and the random order}
	Let $ x, y, z \in [ 0, 1 ] $.
	Then
	$
		i_{ x, y },
	$
	$
		i_{ y, z },
	$
	and
	$
		i_{ x, z }
	$
	can be assigned the labels $ a $, $ b $, and $ c $ so that
	$$
		a = b 
			\le 
				c
			.
	$$

	\noindent
	Moreover, if
	$$
		i_{ x, y }
			=
				i_{ y, z }
			<
				\infty
			,
	$$
	
	\noindent
	then
	$$
		y 
			\prec
				x, z
				\qquad
				\text{or}
				\qquad
			x, z
				\prec
					y
				.
	$$

\end{lemm}

\begin{proof}
	The intervals $ \mc I_{ x, y } $, $ \mc I_{ y, z } $, and $ \mc I_{ x, z } $ can be assigned the labels $ A $, $ B $, and $ C $ so that they satisfy
	\begin{equation}
		\label{eqn interval identity}
		A
			=
				B \sqcup C
			.
	\end{equation}
	
	\noindent
	It follows immediately that
	\begin{align*}
		\inf \left\{ j \ge 1 : U_j \in A \right\}
			& = 
				\inf \left\{ j \ge 1 : U_j \in B \sqcup C \right\}
			\\
			& = 
				\min \!
				\big(
    				\inf \left\{ j \ge 1 : U_j \in B \right\} \!,
					\,
    				\inf \left\{ j \ge 1 : U_j \in C \right\}
					\!
				\,
				\big)
			,
	\end{align*}
	
	\noindent
	establishing the first claim. 
	Assume now that
	$
		i_{ x, y }
			=
				i_{ y, z }
			<
				\infty
			,
	$
	or equivalently,
	$$
		\inf \left\{ j \ge 1 : U_j \in \mc I_{ x, y } \right\}
			=
				\inf \left\{ j \ge 1 : U_j \in \mc I_{ y, z } \right\}
			<
				\infty
			.
	$$
	\noindent
	Then the intervals $ \mc I_{ x, y } $ and $ \mc I_{ y, z } $ are not disjoint
	$
    	(
			U_{ i_{ x, y } } 
    			\in
    				\mc I_{ x, y }
    				\cap
    				\mc I_{ y, z }
    	)
		.
	$
	This implies that $ y $ is either the minimum or maximum of $ \{ x, y, z \} $.
	In particular, the inequality $ ( y - z )/( y - x ) > 0 $ holds.
	From this, we obtain the equivalence
	\[
		x \prec y
			\  \Longleftrightarrow \
				( y - x )
				S_{ i_{ x, y } }
					>
						0
			\  \Longleftrightarrow\
				( y - z )
				S_{ i_{ y, z } }
					>
						0
			\ \Longleftrightarrow \
				z \prec y.
\]

	\noindent
	The equivalence
	$$
		y \prec x
			\quad \Longleftrightarrow \quad
				y \prec z
	$$
	
	\noindent
	can be obtained similarly. 
	Recalling that $ y \neq x $ and $ i_{ x, y } < \infty $, it follows that $ ( y - x ) S_{ i_{ x, y } } \neq 0 $, so either $ x \prec y $ or $ y \prec x $.
	Applying one of the above equivalences concludes the proof.
\end{proof}

\begin{coro}
	\label{cor partial order}
	The relation $ \prec $ is a partial order.
\end{coro}

\begin{proof}
Irreflexivity and asymmetry are straightforward to prove, let us consider transitivity.
	Suppose that $ x \prec y $ and $ y \prec z $.
	Since 
	$
		y 
			\not\prec
				x
			,	
	$
	$
		z
			\not\prec
				y
			,
	$
	and
	$
		i_{ x, y }
			,
			\,
				i_{ y, z }
			<
				\infty
			,
	$
	Lemma~\ref{lem extrema and the random order} implies that
	$
		i_{ x, y }
			\neq
				i_{ y, z }
			.
	$
	Applying Lemma~\ref{lem extrema and the random order} again reveals that
	$
		i_{ x, z }
			=
				\min(
					i_{ x, y },
					i_{ y, z }
					)
	$
	and $ x \prec z $.
	This shows that $ \prec $ is transitive.
\end{proof}

Now we turn our attention to the function $ \phi $.
For this analysis, we make use of the functions
$$
	\phi_k ( x )
		=
			\Leb\big( 
					\big\{ 
						y \in [ 0, 1 ]
					: 
						y \prec x, 
						\,
						i_{ x, y } 
							\le
								k
					\big\} 
				\big)
		,
			\qquad
			k \ge 0
		,
$$

\noindent
which serve as approximations to $ \phi $.
Indeed, the continuity of measure property implies that $ \phi_k \nearrow \phi $ pointwise as $ k \to \infty $.
In some sense, the function $\phi_k$ encodes the $k$-th partial order defined in \cref{ssec:strategy}.
See Figure~\ref{fig:graph_phik} for an illustration.
The basic properties of these functions are summarized in the following result.

\begin{figure}
  \begin{center}
    \includegraphics[width=.3\textwidth]{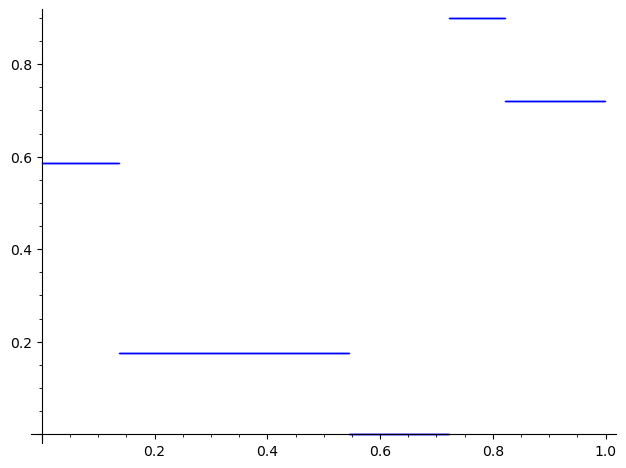} \quad \includegraphics[width=.3\textwidth]{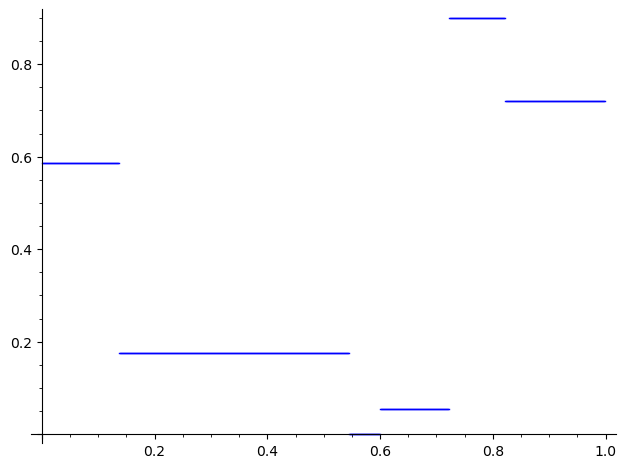} \quad \includegraphics[width=.3\textwidth]{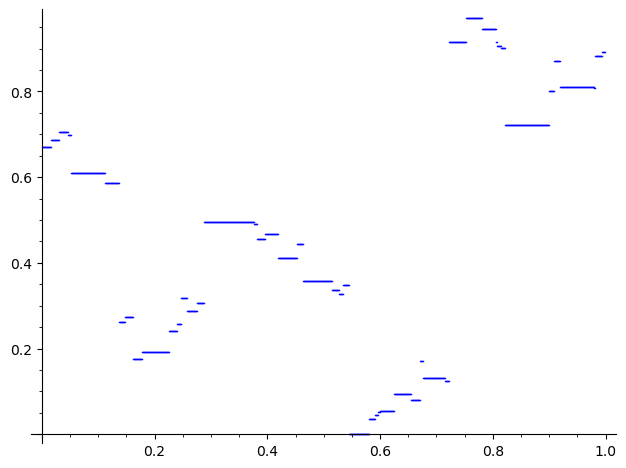}
  \end{center}
  \caption{Graphs of the functions $\phi_k$ for $k \in \{4,5,49\}$, associated with our sampled sequences $(U_j)_{j \le 5}$
  and $(S_j)_{j \le 6}$, whose first values are given in Eq.~\eqref{Eq:Values_US}.}
  \label{fig:graph_phik}
\end{figure}

\begin{prop}

	\label{prop characterization of discrete order}

	Let $ x, y \in [ 0, 1 ] $ and $ k \ge 0 $.
	The following statements hold:
	\begin{enumerate}[ label = (\roman*) ]
	
    	\item
        \label{claim equality in step function}
        $
        	i_{ x, y } > k
        		\qquad
    			\Longleftrightarrow
    			\qquad
            			\phi_k ( x ) 
    						= 
    							\phi_k( y )
    			,
        $

		\item
        \label{claim inequality in step function}
		$
        	x \prec y,
        	\,
        	i_{ x, y } \le k
        		\qquad
    			\Longleftrightarrow
    			\qquad
            			\phi_k( x ) < \phi_k( y ) 
    			,
        $
         \item 
        \label{claim alternative definition of phi_k}
    $   
    	\phi_k ( x )
    		=
    			\Leb\big( 
    					\big\{ 
    						y \in [0,1] 
    					: 
    						\phi_k( y ) < \phi_k( x ) 
    					\big\} 
    				\big)
    		;$
        \item
        \label{claim intervals in step function}
		$ \phi_k $ is constant on each of the intervals
        $$
        	\left[ 0, U_{ ( 1, k ) } \right),
        	\left[ U_{ ( 1, k ) }, U_{ ( 2, k ) } \right),            	
        	\ldots,
        	\left[ U_{ ( k - 1, k ) }, U_{ ( k, k ) } \right),
        	\left[ U_{ ( k, k ) }, 1 \right],
        $$
		\noindent
		and assumes different values on distinct intervals.
	\end{enumerate}

\end{prop}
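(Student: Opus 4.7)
The proof will be organized around the partition $\mathcal{P}_k = \{J_0, J_1, \ldots, J_k\}$ of $[0,1]$ into the $k+1$ half-open intervals listed in (iv). The starting observation, immediate from the definition of $i_{x,y}$, is that $i_{x,y} > k$ if and only if $\mathcal{I}_{x,y}$ contains none of $U_1, \ldots, U_k$, which in turn holds if and only if $x$ and $y$ lie in the same block of $\mathcal{P}_k$. This already reduces (i) to the claim that $\phi_k$ is constant on each block and takes distinct values on distinct blocks.

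To that end I would next put an order on the blocks themselves. For $i \ne i'$ and any $x \in J_i$, $y \in J_{i'}$, the interval $\mathcal{I}_{x,y}$ always contains the entire gap separating $J_i$ from $J_{i'}$, and otherwise differs from another choice only by sub-pieces of $J_i$ and $J_{i'}$, neither of which contains any $U_j$ with $1 \le j \le k$. Hence $i_{x,y}$ and the sign $S_{i_{x,y}}$ depend only on the pair $(i, i')$, and I may define $J_i \prec J_{i'}$ by the common outcome $x \prec y$ among representatives. This relation inherits a partial order structure from \cref{cor partial order}, and since $i_{x,y} < \infty$ ensures $S_{i_{x,y}} \ne 0$, any two distinct blocks are comparable: the $k+1$ blocks form a chain under $\prec$.

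From here the explicit formula
\[
  \phi_k(x) \;=\; \sum_{i' \,:\, J_{i'} \prec J_i} \Leb(J_{i'}) \qquad (x \in J_i)
\]
follows: for $y$ in the same block as $x$ one has $i_{x,y} > k$ and $y$ is excluded, while for $y$ in a different block $J_{i'}$, $i_{x,y} \le k$ holds automatically and the condition ``$y \prec x$'' is uniform in $y \in J_{i'}$ and equivalent to $J_{i'} \prec J_i$. In particular $\phi_k$ is constant on each $J_i$; and on the event $\mc E$ every $J_{i'}$ has positive Lebesgue measure, so the predecessor sums take pairwise distinct values along the chain. This proves (iv), and (i) drops out immediately. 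Part (ii) is then an easy consequence: if $x \prec y$ with $i_{x,y} \le k$, then $x$ and $y$ sit in distinct blocks with $J_i \prec J_{i'}$, and the predecessor sum for $J_{i'}$ strictly dominates that for $J_i$ by at least $\Leb(J_i) > 0$; conversely, $\phi_k(x) < \phi_k(y)$ forces $x$ and $y$ into distinct blocks with $J_i \prec J_{i'}$ by the same formula.

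Finally, for (iii), the chain structure combined with strict monotonicity of the predecessor sum yields $\phi_k(y) < \phi_k(x)$ if and only if $y$ lies in some block $J_{i'}$ with $J_{i'} \prec J_i$. Hence $\{y : \phi_k(y) < \phi_k(x)\}$ is exactly $\bigsqcup_{J_{i'} \prec J_i} J_{i'}$, whose Lebesgue measure is $\phi_k(x)$ by the displayed formula. The only mildly delicate step in all of this is the well-definedness of the block order, which really amounts to carefully decomposing $\mathcal{I}_{x,y}$ into a piece inside $J_i$, the full gap between $J_i$ and $J_{i'}$, and a piece inside $J_{i'}$, and noting that only the middle piece can contain any of $U_1,\ldots,U_k$; once this bookkeeping is in hand, (i)--(iv) all follow from the single formula above.
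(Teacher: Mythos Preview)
Your argument is correct and, in fact, cleaner in its organization than the paper's. The paper proves (i) first by direct set manipulation: it shows that the sets $L_t=\{s:s\prec t,\ i_{s,t}\le k\}$ satisfy $L_x=L_y$ when $i_{x,y}>k$ and $L_x\subsetneq L_y$ when $x\prec y$ with $i_{x,y}\le k$, the strict inclusion being witnessed by an explicit nonempty interval $(U^-,U^+)\subset L_y\setminus L_x$. Parts (ii)--(iv) are then read off from this. You instead go straight to the block structure, put a total order on the $J_i$, and derive the single formula $\phi_k(x)=\sum_{J_{i'}\prec J_i}\Leb(J_{i'})$, from which all four statements are immediate. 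Your route makes the step-function picture explicit up front; the paper's route avoids having to verify that the block order is well-defined, trading that bookkeeping for the construction of $(U^-,U^+)$.

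One cosmetic remark: you invoke the event $\mc E$ to guarantee $\Leb(J_{i'})>0$, but the proposition as stated carries no such hypothesis. All that is actually used---by you and by the paper alike---is that $U_1,\dots,U_k$ are distinct and lie in $(0,1)$, which is the standing almost-sure assumption throughout the section (the paper's ``It should be clear that $(U^-,U^+)$ is nonempty'' relies on exactly the same thing). So this is a phrasing issue, not a gap.
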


\begin{proof}	
	\ref{claim equality in step function}
	$(\Longrightarrow)$
	Let $ x, y \in [ 0, 1 ] $ and $ k \ge 0 $.
	Suppose that $ i_{ x, y } > k $.
	Let us show that
	$
		L_x
			=
				L_y
			,
	$
	where
	$$
		L_t
			=
        		\{
        			s 
        		:
        			s \prec t,
					\,
        			i_{ s, t } \le k
        		\}
			,
				\qquad
				t \in [ 0, 1 ] 
			.
	$$
	
	\noindent
	Let $ z \in L_x $.
	This implies $i_{ x, z }  \le k$, and in particular
	$
		i_{ x, z } 
			<
				i_{ x, y }
	$.
	 From this, Lemma~\ref{lem extrema and the random order} gives us that
	$
		i_{ y, z } = i_{ x, z } 
			\le k
	$
	and
	$
		z \prec y.
	$
	Therefore, $ z \in L_y $, establishing the containment
	$
		L_x
			\subset
				L_y
			.
	$
	The reverse containment holds by symmetry.
	Writing
	$
		\phi_k( x ) 
			=
				\Leb( L_x )
			=
				\Leb( L_y )
			=
				\phi_k( y )
	$
	establishes the desired result.

	\ref{claim equality in step function} $(\Longleftarrow)$
	Suppose now that $ i_{ x, y } \le k $.
	It follows that $ x \neq y $ and $ S_{ i_{ x, y } } \neq 0 $, so either $ x \prec y $ or $ y \prec x $.
	Without loss of generality, we can assume that $ x \prec y $.
	Let us show that $ L_x \subset L_y $.
	Take $ z \in L_x $. 
	Since
	$
		z \prec x \prec y
	$,
		Corollary~\ref{cor partial order} implies that $ z \prec y $ and Lemma~\ref{lem extrema and the random order} implies that
	$
		i_{ x, z }
			\neq
				i_{ x, y }
			.
	$
	Applying Lemma~\ref{lem extrema and the random order} again, we find that
	$
		i_{ y, z } 
			= 
				\min( i_{ x, z }, i_{ x, y } )
			\le 
				k
			,
	$
	and consequently, $ z \in L_y $. 
	This establishes the containment $ L_x \subset L_y $.

	Thus we can write
	$
		\phi_k( y )
			=
				\phi_k( x ) 
			+ 
				\Leb( L_y \setminus L_x )
		,
	$
	and it only remains to show that $ L_y \setminus L_x $ has positive Lebesgue measure.
	To this end, we define
	$$
		U^-
			=
				\begin{cases}
    				\max\{
    						U_j
    					:
    						-1 \le j \le k,
						 	\,
    						 U_j \le x
    					\},
								&
									x \neq 1, 
					\\
    				\max\{
    						U_j
    					:
    						0 \le j \le k
    					\}
								&
									x = 1,
				\end{cases}
	$$
	
	\noindent
	and
	$$
		U^+
			=
				\begin{cases}
    				\min\{
    						U_j
    					:
    						-1 \le j \le k,
						 	\,
    						x < U_j
    					\},
								&
									x \neq 1, 
					\\
    			1
								&
									x = 1,
				\end{cases}
	$$
	
	\noindent
	and will show that $ ( U^-, U^+ ) $ is a nonempty interval contained in $ L_y \setminus L_x $.
	It should be clear that $ ( U^-, U^+ ) $ is nonempty,
	that $x \in [U^-,U^+)$ (except when $x=1$) and that
	$$
		( U^-, U^+ )
			\cap
				\{
					U_1, \ldots, U_k
				\}
					=
						\emptyset
					.
	$$
	
	\noindent
	Now take $ z \in ( U^-, U^+ ) $. We have $\mc I_{ x, z } \subset (U^-,U^+)$
	(except in the case $x=1$, where $U^+$ should be included).
	Recalling that $ U_j \in ( 0, 1 ) $ for $ j \ge 1 $, it can be verified that
	$$
		\mc I_{ x, z }
			\cap
				\{
					U_1, \ldots, U_k
				\}
			\subset
				( U^-, U^+ )
        			\cap
        				\{
        					U_1, \ldots, U_k
        				\} = \emptyset
			.
	$$
	
	\noindent
	It follows immediately that $ i_{ x, z } > k $, so $ z \notin L_x $.
	Since $i_{ x, y } \le k$ and $x \prec y$, we have $i_{ x, y }  < i_{ x, z } $
	and Lemma~\ref{lem extrema and the random order} gives us that
	$
		i_{ y, z }
			=
				i_{ x, y }
			\le 
				k
	$
	and $ z \prec y $, so $ z \in L_y $.
	This establishes the containment 
	$ 
		( U^-, U^+ ) 
			\subset 
				L_y \setminus L_x 
			,
	$ 
	concluding the proof.

	\ref{claim inequality in step function}
	$ ( \Longrightarrow ) $
	This statement was proved in the proof of \ref{claim equality in step function} $ ( \Longleftarrow ) $.

	\ref{claim inequality in step function}
	$ ( \Longleftarrow ) $
	Let $ x, y \in [ 0, 1 ] $ and $ k \ge 0 $ and suppose that 
	$
		\phi_k( x ) < \phi_k( y ).
	$
	Using \ref{claim equality in step function}, we have that
	$
		i_{ x, y }
			\le
				k
			.
	$
	As before, we must have that $ x \prec y $ or $ y \prec x $.
	However, the forward implication in \ref{claim inequality in step function} implies that
	$
		y \not\prec x.
	$
	Therefore, $ x \prec y $.

        \ref{claim alternative definition of phi_k}
        This claim follows from the definition of $\phi_k$ and \ref{claim inequality in step function}

	\ref{claim intervals in step function}
	This claim follows immediately from \ref{claim equality in step function}.
\end{proof}

The above result leads to the following estimate for $ \phi $, which plays a crucial role in the convergence argument.

\begin{coro}
	
	\label{cor discretization estimate}
	 Let $ k \ge 1 $ and $ \nu $ be a probability measure on $ [ 0 , 1 ] $.
	 The following inequality holds:
	\begin{enumerate}[ label = (\roman*) ]
    	\item
	 	\label{claim discretization estimate}
		$
    		\displaystyle
			\sup_{ x \in [ 0, 1 ] }
        		\big|
        			\nu 
        				\{ 
        					y
        				:
        					y \prec x
        				\} 
        		-
        			\nu 
        				\{ 
        					y
        				:
        					\phi_k( y ) < \phi_k( x )
        				\} 
        		\big|
        			\le
            				\Delta_k
        				+
        					\sup_{ J \in \mc A_1 }
        						|
        							\Leb( J )
        						-
        							\nu( J )
        						|
        				.
    	 $
	\end{enumerate}
Consequently, the following convergence holds on $ \mc E $:
	\begin{enumerate}[ label = (\roman*), resume ]
		\item
		\label{claim convergence of empirical measures over intervals in the new order}
		$
			\displaystyle
    		\sup_{ x \in [ 0, 1 ] }
    		 	\left|
    				\phi( x )
    			- 
    				{P_n}
						\{
							y
						:
							y \prec x
						\}
    			\right|
    				\longrightarrow
    					0
		$
		as $ n \to \infty $.
    \end{enumerate}

\end{coro}

\begin{proof}
    Let $ k \ge 1 $ and $ \nu $ be a probability measure on $ [ 0 , 1 ] $.
    Using Proposition \ref{prop characterization of discrete order}, we have that
    \begin{multline*}
    	\left|
    		\nu 
    			\{ 
    				y
    			:
    				y \prec x
    			\} 
    	-
    		\nu 
    			\{ 
    				y
    			:
    				\phi_k( y ) < \phi_k( x )
    			\} 
    	\right|
    	 =
    			\nu 
    				\{ 
    					y
    				:
    					y \prec x, 
    					\,
    					i_{ x, y } > k
    				\} 
    		\\
    		\le
    			\nu 
    				\{ 
    					y
    				:
    					i_{ x, y } > k
    				\} 
    		=
    			\nu 
    				\{ 
    					y
    				:
    					\phi_k( y ) = \phi_k( x )
    				\} 
    	 =
        			\nu( 
        				\phi_k^{ -1 }( \phi_k( x ) )
    					).
    \end{multline*}

    \noindent
  From Proposition \ref{prop characterization of discrete order}\ref{claim intervals in step function}, 
    $
        \phi_k^{ -1 }( \phi_k( x ) )
        ,
    $
    is an interval between two consecutive values in the set $\{U_{-1},U_0,\cdots,U_k\}$.
    Therefore
    \begin{align*}
    \Leb(        			\phi_k^{ -1 }( \phi_k( x ) )
    					) \le \Delta_k \text{ and } 
					|\nu( 
        				\phi_k^{ -1 }( \phi_k( x ) )
    					)
        		-
        			\Leb(
        				\phi_k^{ -1 }( \phi_k( x ) )
    					)|\le \sup_{ J \in \mc A_1 }
        						|
        							\Leb( J )
        						-
        							\nu( J )
        						|,
    \end{align*}
establishing \ref{claim discretization estimate}.
    Taking $ \nu = \Leb $ and $ \nu = {P_n} $ in \ref{claim discretization estimate} gives us that, for $k \ge 1$,
		\begin{equation}
			\label{bound for phi_k approximation}
    		\displaystyle
			\sup_{ x \in [ 0, 1 ] }
        		\big|
        			\Leb 
        				\{ 
        					y
        				:
        					y \prec x
        				\} 
        		-
        			\Leb 
        				\{ 
        					y
        				:
        					\phi_k( y ) < \phi_k( x )
        				\} 
        		\big|
        			\le
            				\Delta_k
    	 \end{equation}

    \noindent
    and, for $n,k \ge 1$,
		$$
    		\displaystyle
			\sup_{ x \in [ 0, 1 ] }
        		\big|
        			{P_n}
        				\{ 
        					y
        				:
        					y \prec x
        				\} 
        		-
        			{P_n}
        				\{ 
        					y
        				:
        					\phi_k( y ) < \phi_k( x )
        				\} 
        		\big|
        			\le
            				\Delta_k
        				+
        					\sup_{ J \in \mc A_1 }
        						|
        							\Leb( J )
        						-
        							{P_n}( J )
        						|
				.
    	 $$
    
    \noindent
    Note that $ \mc A_1 $ can be replaced by $ \mc A_k $ above since $ \mc A_1 \subset \mc A_k $ for all $ k \ge 1 $.
    Observe also that the set
	$
		\{ 
			y
		:
			\phi_k( y ) < \phi_k( x )
		\} 
	$
	lies in $ \mc A_k $ for every $ x \in [ 0, 1 ] $ (see Proposition \ref{prop characterization of discrete order}\ref{claim intervals in step function}), from which we obtain the following inequality (for $n,k \ge 1$):
		$$
    		\displaystyle
			\sup_{ x \in [ 0, 1 ] }
        		\big|
        			\Leb 
        				\{ 
        					y
        				:
        					\phi_k( y ) < \phi_k( x )
        				\} 
        		-
        			{P_n} 
        				\{ 
        					y
        				:
        					\phi_k( y ) < \phi_k( x )
        				\} 
        		\big|
        			\le
        					\sup_{ J \in \mc A_k }
        						|
        							\Leb( J )
        						-
        							{P_n}( J )
        						|
					.
    	$$
	
		\noindent
		Collecting these bounds, we obtain the estimates (again for  $n,k \ge 1$)
		$$
			\displaystyle
    		\sup_{ x \in [ 0, 1 ] }
    		 	\big|
    				\Leb \{
							y
						:
							y \prec x
						\}
    			- 
    				{P_n}
						\{
							y
						:
							y \prec x
						\}
    			\big|
        			\le
            				2 \, \Delta_k
        				+
        					2 \sup_{ J \in \mc A_k }
        						|
        							\Leb( J )
        						-
        							{P_n}( J )
        						|
					.
		$$
		
	\noindent
	Since $\Leb \{ y :y \prec x \} =\phi(x)$,
	applying Proposition~\ref{prop consequences Glivenko-Cantelli}
	(items~\ref{claim convergence of maximal gap} 
	and \ref{claim convergence of empirical measures over intervals}) 
	concludes the proof.
\end{proof}

Finally, we establish the main properties of the objects $ \prec $, $ \phi $, and $ \murec_p $.

\begin{prop}
	\label{properties of phi}
	On $ \mc E $, the following statements hold:
	\begin{enumerate}[ label = (\roman*) ]
		\item
		\label{claim total order}
		the relation $ \prec $ is a total order,
				
		\item
		\label{claim uniform convergence of phi_k}
		$ \phi_k \to \phi $ uniformly on $ [ 0, 1 ] $ as $ k \to \infty $,
		
		\item
		\label{claim continuity of phi}
		$ \phi $ is continuous on $ [ 0 , 1 ] \setminus \{ U_j \}_{ j \ge 1 } $, 

		\item
		\label{claim phi is measure preserving}
		$ \phi $ preserves the measure $ \Leb $,
		and thus $\mu_\phi$ is a permuton.
	\end{enumerate}
\end{prop}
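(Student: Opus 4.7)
Throughout, we work on the event $\mc E$ and exploit the results already established for the approximations $\phi_k$.

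For \ref{claim total order}, the partial order structure is given by \cref{cor partial order}, so only totality remains. If $x \ne y$, say $x < y$, then $(x,y]$ is a nonempty open interval. By \cref{prop consequences Glivenko-Cantelli}\ref{claim convergence of maximal gap}, $\Delta_n \to 0$, so for $n$ large enough every interval of length $y-x$ contains some $U_j$ with $1 \le j \le n$; hence $i_{x,y} < \infty$. Since $S_{i_{x,y}} \in \{\oplus,\ominus\}$ is nonzero, either $x \prec y$ or $y \prec x$, giving totality.

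For \ref{claim uniform convergence of phi_k}, I would combine two identities: $\phi(x) = \Leb\{y : y \prec x\}$ by definition, and $\phi_k(x) = \Leb\{y : \phi_k(y) < \phi_k(x)\}$ by \cref{prop characterization of discrete order}\ref{claim alternative definition of phi_k}. The bound \eqref{bound for phi_k approximation} in \cref{cor discretization estimate} then yields $\sup_x |\phi(x) - \phi_k(x)| \le \Delta_k$, which tends to $0$ on $\mc E$.

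For \ref{claim continuity of phi}, fix $x \notin \{U_j\}_{j \ge 1}$. Since $\{U_{(1,k)},\dots,U_{(k,k)}\} = \{U_1,\dots,U_k\}$, the point $x$ lies in the interior of one of the intervals of \cref{prop characterization of discrete order}\ref{claim intervals in step function}, so $\phi_k$ is constant in a neighborhood of $x$. Each $\phi_k$ is thus continuous at $x$, and continuity of $\phi$ at $x$ follows from the uniform convergence in \ref{claim uniform convergence of phi_k}.

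The main obstacle is \ref{claim phi is measure preserving}. My plan is to analyze the pushforward $(\phi_k)_\#\Leb$ explicitly and pass to the limit. By \cref{prop characterization of discrete order}\ref{claim intervals in step function}, $\phi_k$ takes distinct constant values $a_1,\dots,a_{k+1}$ on the intervals $I_1,\dots,I_{k+1}$ determined by the order statistics of $U_1,\dots,U_k$. Reindexing so that $a_{j_1}<a_{j_2}<\dots<a_{j_{k+1}}$, \cref{prop characterization of discrete order}\ref{claim alternative definition of phi_k} gives $a_{j_m} = \sum_{l<m}\Leb(I_{j_l})$, so that consecutive atoms satisfy $a_{j_{m+1}} - a_{j_m} = \Leb(I_{j_m})$ and the atoms partition $[0,1]$. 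The CDF $F_k$ of $(\phi_k)_\#\Leb$ then satisfies $|F_k(t) - t| \le \max_j \Leb(I_j) \le \Delta_k$ for all $t \in [0,1]$, so $(\phi_k)_\#\Leb$ converges weakly to $\Leb$. On the other hand, uniform convergence $\phi_k \to \phi$ implies $(\phi_k)_\#\Leb \to \phi_\#\Leb$ weakly (since $g \circ \phi_k \to g\circ\phi$ uniformly for any continuous $g$ on $[0,1]$). By uniqueness of the weak limit, $\phi_\#\Leb = \Leb$. Finally, $\murec_p = (\mathrm{id},\phi)_\#\Leb$ has first marginal $\Leb$ by construction and second marginal $\phi_\#\Leb = \Leb$, so it is a permuton.
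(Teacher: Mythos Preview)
Your proofs of \ref{claim total order}, \ref{claim uniform convergence of phi_k} and \ref{claim continuity of phi} are correct and essentially identical to the paper's: both rely on $\Delta_k \to 0$ for totality, on the estimate \eqref{bound for phi_k approximation} for uniform convergence, and on uniform convergence of locally constant functions for continuity. (One minor point: for $x\in\{0,1\}$ you are at a boundary of one of the intervals rather than in its interior, but $\phi_k$ is still constant on a one-sided neighborhood, so continuity holds.)

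Your argument for \ref{claim phi is measure preserving} is correct but takes a genuinely different route. The paper argues pointwise on the distribution function: for each $b\in(0,1)$ it chooses $y_k$ with $\phi_k(y_k)$ the smallest value of $\phi_k$ above $b$, uses \cref{prop characterization of discrete order}\ref{claim alternative definition of phi_k} to rewrite $\phi_k(y_k)=\Leb\{z:\phi_k(z)\le b\}$, and then invokes the \emph{monotone} convergence $\phi_k\nearrow\phi$ to pass to $\Leb\{z:\phi(z)\le b\}$. You instead identify the discrete pushforward $(\phi_k)_\#\Leb$ exactly, bound its CDF by $|F_k(t)-t|\le\Delta_k$, and conclude $(\phi_k)_\#\Leb\Rightarrow\Leb$; independently, uniform convergence $\phi_k\to\phi$ gives $(\phi_k)_\#\Leb\Rightarrow\phi_\#\Leb$, and uniqueness of weak limits finishes. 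Your approach is slightly more conceptual and avoids the auxiliary points $y_k$, at the cost of an extra weak-convergence step; the paper's approach is more hands-on and exploits the monotonicity $\phi_k\nearrow\phi$, which you never use. Both are clean and of comparable length.
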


\begin{proof}
	Take $ x \neq y $ in $ [ 0, 1 ] $ and suppose that $ \mc E $ occurs.
	From Proposition \ref{prop consequences Glivenko-Cantelli}, 
	we know that the maximal gap $\Delta_k$ of the sequence $ ( U_j )_{ j \le k } $
	tends to $0$, so the infinite sequence $ ( U_j )_{ j \ge 1 }$
	 must intersect the nonempty interval $ \mc I_{ x, y } $.
    It follows that $ x $ and $ y $ are comparable in the order $ \prec $. This establishes \ref{claim total order}.
   
The uniform convergence in \ref{claim uniform convergence of phi_k} follows immediately from Proposition~\ref{prop characterization of discrete order}\ref{claim alternative definition of phi_k},
Eq.~\eqref{bound for phi_k approximation} and Proposition~\ref{prop consequences Glivenko-Cantelli}\ref{claim convergence of maximal gap}.
    The continuity in \ref{claim continuity of phi} then follows from 
    the uniform convergence in  \ref{claim uniform convergence of phi_k}
    and from Proposition \ref{prop characterization of discrete order}\ref{claim intervals in step function}, which describes the continuity of each $ \phi_k $.

 It remains to prove \ref{claim phi is measure preserving}, i.e.~that $\phi$
 preserves the Lebesgue measure.
   Let $ b \in ( 0, 1 )$. From Proposition \ref{prop characterization of discrete order}\ref{claim alternative definition of phi_k} and \ref{claim intervals in step function}, we see that
     each $ \phi_k $ attains only finitely many values
     and that the gap between consecutive values is bounded by $\Delta_k$.
       In particular, defining $y_k$ such that $\phi_k(y_k)$ is the smallest element in the range
    of $\phi_k$ above $b$ (which exists for large $k$), we have $\lim_{k\to +\infty} \phi_k(y_k) =b$.
   Also, using Proposition \ref{prop characterization of discrete order}\ref{claim alternative definition of phi_k}, we have
       \begin{align*}
    	\phi_k( y_k )
			= 
    			\Leb\big( 
    					\big\{ 
    						z
    					: 
    						\phi_k( z ) < \phi_k( y_k )
    					\big\} 
    				\big)
			 = 
    			\Leb\big( 
    					\big\{ 
    						z
    					: 
    						\phi_k( z ) \le b
    					\big\} 
    				\big).
    \end{align*}

	\noindent
	Recalling that $ \phi_k \nearrow \phi $, we have that
    \begin{multline*}
    	( \phi_\# \Leb )(  [ 0, b ] )
			 = 
    			\Leb\big( 
    					\big\{ 
    						z 
    					: 
    						\phi( z ) \le b
    					\big\} 
    				\big)
			 = 
    			\lim_{ k \to \infty } 
				\Leb\big( 
						\big\{ 
    						z
    					: 
    						\phi_k( z ) \le b 
    					\big\} 
    				\big)\\
			 = 
    			\lim_{ k \to \infty } 
					\phi_k( y_k )
			 = 
    				b
			.
    \end{multline*}
        
    \noindent
    Since this holds for every $ b \in ( 0, 1 ) $, the measures $ \phi_\# \Leb $ and $ \Leb $ are the same, and \ref{claim phi is measure preserving} holds.
\end{proof}

\subsection{The permutations}
\label{ssec:construction_permutations}
We continue our construction by realizing the recursive separable permutations on our probability space.
As mentioned earlier, the strategy will be to identify the partial orders from Section \ref{ssec:strategy} as permutations.
Informally, we do this by viewing the intervals in a partial order as forming a \enquote{shape} that defines a permutation diagram. 
For example, the 2nd partial order illustrated in Figure~\ref{fig:construction_order} has the shape of the permutation $ 132 $ since  
	its leftmost interval $ [ 0, U_1 ) $ is placed at the lowest level, 
	its middle interval $ [ U_1, U_2 ) $ is placed at the highest level, 
	and
	its rightmost interval $ [ U_2, 1 ] $ is placed at an intermediate level.
Note that this shape can also be deduced from simply considering the left endpoints $ 0, U_1 $, and $ U_2 $, which satisfy $ 0 < U_1 < U_2 $ and $ 0 \prec U_2 \prec U_1 $.

Throughout this section, we will assume that the $(U_j)_{j \ge 1}$ are distinct and different from 0 and 1. 
This ensures that the random set
$
	E_n=
    	\{ 		
			U_j 
			,\,  0 \le j \le n - 1 
    	\}
$ has size $n$.
Equipping each $E_n$ with the natural order $<$ and the random order $\prec$, 
the above considerations prompt us to define
\begin{equation}
\label{eq def lambda n}
	\incPermutations_n
     	=
    		\Perm
    			\Big( 
    				E_n, <,\prec 
    			\Big)
    	,
    		\qquad
    		n \ge 1
    	.
\end{equation}
An illustration is provided in Figure~\ref{fig:lambda}, which should be compared with Figures~\ref{fig:construction_order} and \ref{fig:graph_phik}.

In Proposition~\ref{prop these are recursive separable permutations}, we confirm that this permutation sequence has the same distribution as $(\sigma^{(n),p})_{n \ge 1}$.
The proof uses some standard exchangeability properties of i.i.d.~random variables, implying for example that the relative order of the $(U_i)_{i \le n}$ is independent from the set of their values $\{U_i, i \le n\}$.
As a first step, we establish a recursion for $ \incPermutations_n $, which will require some notation.

%
%
%
%
\begin{figure}
  \begin{center}
    \includegraphics[width=.3\textwidth]{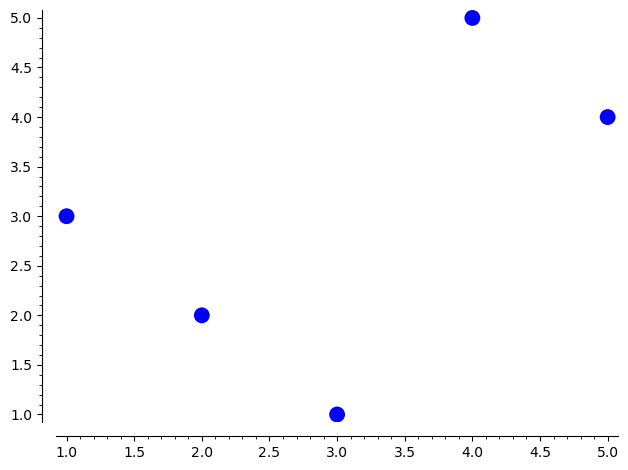} \quad \includegraphics[width=.3\textwidth]{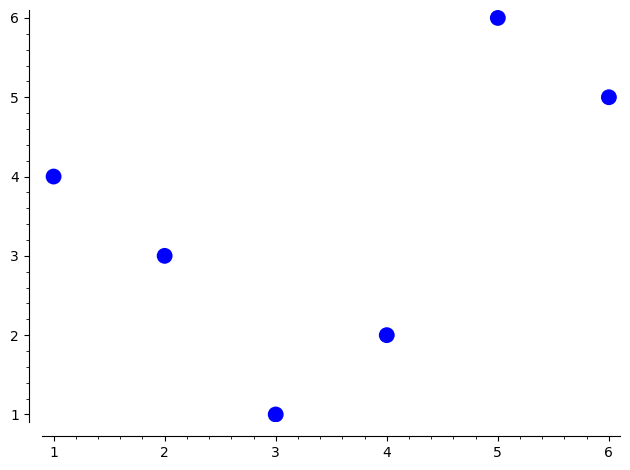} \quad \includegraphics[width=.3\textwidth]{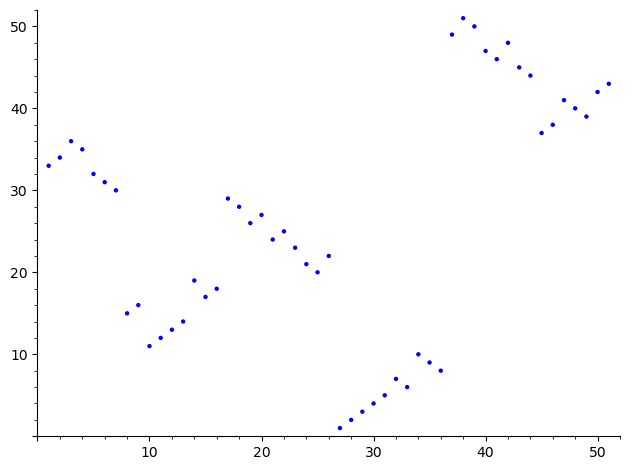}
  \end{center}
  \caption{The permutations $\lambda_n$ for $n \in \{5,6,50\}$, associated with our sampled sequences $(U_j)_{j \le 5}$
  and $(S_j)_{j \le 6}$, whose first values are given in Eq.~\eqref{Eq:Values_US}.}
  \label{fig:lambda}
\end{figure}
\medskip

Given a permutation $ \tau $ of $ n $, a sign $ s \in \{ \oplus, \ominus \} $, and an integer $ k \in \{ 1, \ldots, n \} $, let $ \tau_{s,k} $ be the permutation obtained by writing $ \tau $ in one-line notation, increasing all values bigger than $j= \tau(k) $, and replacing $ j $ by $ j \, j\!\! +\!\! 1 $ if $ s = \oplus $ or by $ j\!\! +\!\! 1 \, j $ if $ s = \ominus $.
For a permutation $\sigma$ of $n$ and an integer $k$,
we denote by $ \sigma^{\downarrow k}$ the pattern induced by $\sigma$
on the set of positions $\{1,\dots,n\} \setminus \{k\}$ (in other words,
we erase $\sigma_k$ in the one-line notation and decrease by 1 all values bigger than $\sigma_k$
to get a permutation).

With this notation in hand,
it can be verified that $  \tau_{s,k} $ is the unique permutation $ \sigma $ of $n+1$ satisfying the following properties:
	\begin{enumerate}[ label = (\roman*) ]
		\item
		$ \sigma^{\downarrow k}$ 
		and  $\sigma^{\downarrow (k+1)}$ are both equal to $ \tau$; 
		
		\item
		$ \sigma( k + 1 ) > \sigma(k)$ if $s=\oplus$ (resp.~$ \sigma( k + 1 ) < \sigma(k)$ if $s=\ominus$).
	\end{enumerate}

\begin{prop}
	\label{prop permutation recursion}
Suppose that the elements $(U_j)_{j=1}^n$ are distinct.
Let $ R_n $ denote the rank of $ U_n $ in 
	$ 
		\{
			U_j
		\}_{ j = 1 }^n
	$
	(i.e.~the unique integer satisfying
	$
		U_{ ( R_n, \, n ) }
			=
				U_{ n }
			).
	$
	The following recursion holds:
	$$
		\incPermutations_{ n + 1 }
			=
				( \incPermutations_n)_{S_n,R_n}
			,
				\qquad
				n \ge 1
			.
	$$
\end{prop}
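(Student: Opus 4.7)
The plan is to verify the characterization of $\tau_{s,k}$ stated just before the proposition. With $\sigma = \incPermutations_{n+1}$, $\tau = \incPermutations_n$, $s = S_n$, and $k = R_n$, two things must be shown:
\emph{(i)} $\sigma^{\downarrow R_n} = \sigma^{\downarrow (R_n + 1)} = \tau$, and
\emph{(ii)} $\sigma(R_n + 1) > \sigma(R_n)$ iff $S_n = \oplus$.
First I would identify which points occupy positions $R_n$ and $R_n + 1$ of $\sigma$. Since $U_0 = 0$ is a.s.~the minimum of $\{U_0,\dots,U_n\}$, the $<$-rank of $U_{(j,n)}$ in this set is $j+1$. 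Consequently position $R_n+1$ is occupied by $U_n = U_{(R_n,n)}$, and position $R_n$ by its immediate $<$-predecessor in the set, which I denote by $V$ (so $V = U_0$ if $R_n = 1$, and $V = U_{(R_n-1,n)}$ otherwise). The identity $\sigma^{\downarrow (R_n+1)} = \tau$ is then immediate from the pattern-extraction formula recalled above the proposition: removing the point $U_n$ leaves the set $\{U_0,\dots,U_{n-1}\}$ whose $(<,\prec)$-permutation is by definition $\tau$.

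The heart of the argument is the other equality $\sigma^{\downarrow R_n} = \tau$. Removing $V$ yields the set $\{U_0,\dots,U_n\} \setminus \{V\}$, which is not literally $\{U_0,\dots,U_{n-1}\}$. I would prove that both give the same permutation by exhibiting the bijection $\phi$ that swaps $U_n$ and $V$ (and is the identity on the remaining points), and showing $\phi$ preserves both orders. The $<$-preservation follows at once from the fact that $V$ and $U_n$ are $<$-consecutive in $\{U_0,\dots,U_n\}$, so they occupy the same $<$-rank in their respective sets, while all other elements keep their ranks. The $\prec$-preservation reduces to showing that $i_{x,V} = i_{x,U_n}$ for every $x \in \{U_0,\dots,U_n\} \setminus \{V,U_n\}$; combined with the sign of $V-x$ and $U_n-x$ (which agree), this gives $x \prec V \Leftrightarrow x \prec U_n$. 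The crucial observation is that since $V$ and $U_n$ are $<$-consecutive in $\{U_0,\dots,U_n\}$, no $U_j$ with $1 \le j \le n-1$ lies in the interval $(V,U_n]$. Any such $x$ is automatically of the form $U_m$ with $m \le n-1$, which forces $i_{x,V},\, i_{x,U_n} \le n-1$; the intervals $\mc I_{x,V}$ and $\mc I_{x,U_n}$ differ only by $(V,U_n]$, which contributes no index in $\{1,\dots,n-1\}$, so the two infima coincide. A short split into the subcases $x < V$ and $x > U_n$ completes the verification.

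For \emph{(ii)}, the same observation gives $i_{V,U_n} = n$ directly: the only $U_j$ with $j \le n$ in $(V,U_n]$ is $U_n$ itself. Hence $V \prec U_n$ iff $(U_n-V)\,S_n > 0$ iff $S_n = \oplus$, and translating this into $\prec$-ranks in $\sigma$ gives exactly the inequality $\sigma(R_n+1) > \sigma(R_n)$. The main obstacle I foresee is not conceptual but notational: one must simultaneously track indices relative to $\{U_1,\dots,U_n\}$ (where $R_n$ lives) and to $\{U_0,\dots,U_n\}$ (where positions in $\sigma$ live), and handle carefully the degenerate case $R_n = 1$ in which $V = U_0$ is not of the form $U_j$ with $j \ge 1$.
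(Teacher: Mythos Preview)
Your proof is correct and follows essentially the same approach as the paper: both verify the two-part characterization of $(\lambda_n)_{S_n,R_n}$ by showing that $V=U_{(R_n-1,n)}$ and $U_n$ compare identically with every other element of $\{U_0,\dots,U_{n-1}\}$ under both $<$ and $\prec$, and that $i_{V,U_n}=n$. The only difference is that where you derive $i_{x,V}=i_{x,U_n}$ directly from the interval decomposition, the paper obtains it by invoking \cref{lem extrema and the random order}; note also that when $x=U_0<V$ the bound $i_{x,V}\le n-1$ comes from $V$ lying in $\mc I_{x,V}$ rather than from $x$ itself.
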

Before proving the result, let us consider an example using the sampled values $(U_j)_{j \le 5}$ and $(S_j)_{j \le 5}$ in \cref{Eq:Values_US}.
Taking $ n = 5 $, the relevant quantities are $S_5=\oplus$, $R_5=3$, and the permutations $\lambda_5 $ and $\lambda_6 $ depicted in Figure~\ref{fig:lambda}.
It can be seen from the diagrams that the identity $\lambda_6=(\lambda_5)_{\oplus,3}$ does hold: $\lambda_6$ is obtained from $\lambda_5$ by an increasing inflation of its third point from the left.
\begin{proof}
	We will verify that $ \incPermutations_{ n + 1 } $ satisfies the three conditions that characterize
	$
    ( \incPermutations_n)_{S_n,R_n}
	$.
	Writing $k=R_n$, we note that $U_n$ corresponds to the $ (k+1 )$-st point
	from the left in $\lambda_{n+1}$ 
	(note that the index $j$ starts from $0$ in eq.~\eqref{eq def lambda n}).
	Therefore we have 
	$$(\lambda_{n+1})^{\downarrow k+\! 1}=\Perm
    			\big( 
    				\{ 
							U_j, 		
					  \, 0 \le j \le n - 1 		
    				\},<,\prec
    			\big)=\lambda_n.$$
	
	Let us consider the pattern $(\lambda_{n+1})^{\downarrow k}$.
		The $k$-th point from the left in $\lambda_{n+1}$ corresponds to $U_{(k-1,n)}$,
		 so that 
	\[(\lambda_{n+1})^{\downarrow k}= \Perm
    			\big( 
    				\{ 	
							U_j, \, 0\le j  \le n 
    				\} \setminus \{U_{(k-1,n)}\}, <, \prec 
    			\big).\]
			To show that this is also $\lambda_n$, we need to show that
$U_n$ and $U_{(k-1,n)}$ compare in the same way with other $U_j$'s ($j \le n$),
both for the natural order $<$ and for the random order $\prec$.

The case of the natural order $<$ is trivial by definition of the ordered statistics, since $U_n=U_{(k,n)}$.
Consider the random order $\prec$.
Since $U_n=U_{(k,n)}$, the element $U_n$ is the $U$ with smallest index
 in the interval $(U_{(k-1,n)},U_n]$, i.e.~$i_{U_{(k-1,n)},U_n}=n$.
 On the other hand, for $j <n$ and $U_j \neq U_{(k-1,n)}$, the interval $\mc I_{U_{(k-1,n)},U_j}$
 contains either $U_j$ or $U_{(k-1,n)}$, so that $i_{U_{(k-1,n)},U_j}<n$.
Applying Lemma~\ref{lem extrema and the random order},
we then find that
	$
		i_{ U_{(k-1,n)}, U_j }
			=
				i_{ U_j, U_n }
			<
				\infty
			,
	$
and
$U_{(k-1,n)}$ and $U_n$ compare in the same way with $U_j$ in the order $\prec$,
which is what we needed to show.
Thus $\lambda_{n+1}$ verifies the first condition in the characterization
of $
    ( \incPermutations_n)_{S_n,R_n}
	$.

It remains to show that $ \lambda_{n+1}( k) < \lambda_{n+1}(k+1)$ if $S_n=\oplus$,
 (resp.~$ \lambda_{n+1}( k ) > \lambda_{n+1}(k+1)$ if $S_n=\ominus$).
 We consider the case $S_n=\oplus$.
 Since the $k$-th and $ ( k+1 )$-st points from the left in $ \lambda_{n+1}$
 correspond to $U_{(k-1,n)}$ and $U_n$ respectively,
 it suffices to show that $U_{(k-1,n)} \prec U_n$.
 This  follows from the fact that $i_{U_{(k-1,n)},U_n}=n$ (see above) and 
 the assumption $S_n=\oplus$
 (recall that $U_{(k-1,n)} < U_n$ for the natural order).
  
 We have proved that $\lambda_{n+1}$ verifies the second condition in the characterization
of $
    ( \incPermutations_n)_{S_n,R_n}
	$, implying $\lambda_{n+1}= ( \incPermutations_n)_{S_n,R_n}$, as desired.
\end{proof}

\begin{prop}
	\label{prop these are recursive separable permutations}
	The permutations $ ( \incPermutations_n )_{ n \ge 1 } $ are recursive separable permutations.
	In other words, 
	$$
		(\incPermutations_n)_{n \ge 1}
			\stackrel{d}=
				(\sigma^{(n),p})_{n \ge 1}.
	$$
\end{prop}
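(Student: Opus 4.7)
The plan is to use the recursion in \cref{prop permutation recursion} to identify $(\incPermutations_n)_{n \ge 1}$ as a Markov chain with the same transition mechanism as $(\sigma^{(n),p})_{n \ge 1}$. Since both chains start from the unique permutation of $1$, equality in distribution of the whole process will follow.

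First, I would compare the map $\tau \mapsto \tau_{s,k}$ defined before \cref{prop permutation recursion} with the inflation operation of Section~\ref{ssec:model}. Picking a uniform value $j \in \{1,\dots,n\}$ in the introduction's construction is the same as picking a uniform position $k = (\sigma^{(n),p})^{-1}(j) \in \{1,\dots,n\}$ (since $(\sigma^{(n),p})^{-1}$ is a bijection), and then the three-step inflation procedure produces exactly $(\sigma^{(n),p})_{S,k}$, where $S = \oplus$ (resp.~$\ominus$) with probability $p$ (resp.~$1-p$). Hence the chain $(\sigma^{(n),p})_{n \ge 1}$ satisfies
\[ \sigma^{(n+1),p} \stackrel{d}{=} (\sigma^{(n),p})_{S'_n, R'_n}, \]
where, conditionally on $\sigma^{(n),p}$, the pair $(R'_n, S'_n)$ is independent of the past, with $R'_n$ uniform in $\{1,\dots,n\}$ and $S'_n$ a Bernoulli-type sign of parameter $p$.

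Next, I would establish the analogous statement for the chain $(\incPermutations_n)_{n \ge 1}$. By \cref{prop permutation recursion}, $\incPermutations_{n+1} = (\incPermutations_n)_{S_n, R_n}$. The sign $S_n$ is by construction independent of everything occurring before time $n$ and has the correct distribution. The main point is that $R_n$ (the rank of $U_n$ in $\{U_1,\dots,U_n\}$) is uniform on $\{1,\dots,n\}$ and independent of $\incPermutations_n$ together with $(S_k)_{k \le n-1}$. To see this, I would use the classical fact that for an i.i.d.~uniform sequence, the rank $R_k$ of $U_k$ in $\{U_1,\dots,U_k\}$ is uniform on $\{1,\dots,k\}$ and the $R_k$'s are mutually independent (and independent from $(S_k)_k$, which is an independent family). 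Then I would argue that $\incPermutations_n$ is a deterministic function of $(R_1,S_1,\dots,R_{n-1},S_{n-1})$: indeed, the restriction of the order $\prec$ to $E_n = \{U_0, U_1, \dots, U_{n-1}\}$ depends only on the relative order of $U_1,\dots,U_{n-1}$ (equivalently, on $(R_1,\dots,R_{n-1})$) and on $S_1,\dots,S_{n-1}$, since the indices $i_{x,y}$ for $x,y \in E_n$ all lie in $\{1,\dots,n-1\}$ (because $\mc I_{x,y}$ always contains some $U_k$ with $k \le n-1$, namely the right endpoint when $x,y \in E_n$). The independence and uniformity of $R_n$ with respect to $(R_1,\dots,R_{n-1})$ and $(S_k)_{k \ge 1}$ then yields the required conditional distribution of $(R_n, S_n)$ given $\incPermutations_n$.

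Combining these two observations, both $(\incPermutations_n)_{n \ge 1}$ and $(\sigma^{(n),p})_{n \ge 1}$ are Markov chains on the space of permutations with identical initial distributions (the unique permutation of size~$1$) and identical transition kernels $\tau \mapsto \tau_{s,k}$ with $(k,s)$ uniform-Bernoulli distributed. An immediate induction on $n$ yields the equality in distribution of the two processes. The most delicate point is the verification that $\incPermutations_n$ is measurable with respect to $(R_1,S_1,\dots,R_{n-1},S_{n-1})$, i.e.~that the indices $i_{U_i, U_j}$ involved in the definition of $\prec$ on $E_n$ do not exceed $n-1$; this is a short bookkeeping argument using that one endpoint of $\mc I_{x,y}$ lies in $E_n$ for $x,y \in E_n$.
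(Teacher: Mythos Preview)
Your proposal is correct and follows essentially the same approach as the paper: use the recursion of \cref{prop permutation recursion}, establish that $(R_n,S_n)$ is independent of $(\incPermutations_1,\dots,\incPermutations_n)$ with the appropriate distribution (via the measurability of $\incPermutations_n$ with respect to the relative order of $U_0,\dots,U_{n-1}$ and the signs $S_1,\dots,S_{n-1}$), and conclude that both processes are Markov chains with the same initial state and transition kernel. Your write-up is slightly more explicit than the paper's in invoking the classical independence of the rank sequence $(R_k)_k$ and in spelling out why the inflation operation in Section~\ref{ssec:model} matches $\tau \mapsto \tau_{s,k}$, but the argument is the same.
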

\begin{proof}
	We start with the following observation.
	\begin{itemize}
	\item To compare $U_j$ and $U_k$ in the order $\prec$, we need to look
	at the sign $S_{i_{U_j,U_k}}$. If $j,k <n$ and $U_j \ne U_k$,
	then the interval $\mc I_{U_j,U_k}$ contains either $U_j$ or $U_k$,
	and thus $i_{U_j,U_k}<n$. Therefore, the restriction of $\prec$ to $\{U_0,\dots,U_{n-1}\}$
	only depends on the restriction of the natural order $<$ to  $\{U_0,\dots,U_{n-1}\}$
	and on the signs $S_1, \ldots, S_{ n - 1 } $.
	Consequently, the tuple of permutations $(\incPermutations_1, \dots, \incPermutations_n) $
	only depends on this data as well.
	\item On the other hand, $ R_n $ describes how $ U_n $ fits into the
	gaps of the set $\{U_0,\dots,U_{n-1}\}$, and is independent
	of its order structure.
	\end{itemize}	
	Therefore the random variables $(\incPermutations_1, \dots, \incPermutations_n) $, 
	$ S_n $, and $ R_n $ are independent.
	Combining this with Proposition \ref{prop permutation recursion}, 
	this implies that $(\incPermutations_n)_{n \ge 1}$ is a Markov process
	and we can compute its transition kernel.
	If $ \pi $ and $\tau$ are permutations of $ n $ and $n+1$ respectively,
	one has
	\begin{align*}
		\prob( 
				\incPermutations_{ n + 1 } = \tau | \incPermutations_n= \pi
			)
				& = 
                						\sum_{ j = 1 }^n
						\sum_{ s \in \{ \oplus, \ominus \} }
						\indicator( 
                				\pi_{s,j} = \tau
                			)
                                    \prob( 
									R_n = j,
								S_n = s 
	                			)
				\\
			&= 
                								\sum_{ j = 1 }^n
						\left(\frac{p}{n}
						\,
						\indicator( 
                				\pi_{\oplus,j} = \tau
                			)
					+
						\frac{1-p}{n}
						\,
						\indicator( 
                				 \pi_{\ominus,j} = \tau
                			) \right)
				\\
				& = 
						\prob( 
                				\sigma^{ ( n+1), \, p  } = \tau
							|
                				\sigma^{ ( n), \, p  } = \pi
                			)
	\end{align*}
Since $(\incPermutations_n)_{n \ge 1}$ and $(\sigma^{ ( n), \, p  })_{n \ge 1}$
are both Markov chains with the same transition kernel and the same initial distribution
($\incPermutations_1$ and $\sigma^{ ( 1), \, p  }$ are both equal to the 
unique permutation of $1$ a.s.), they have the same distribution.
\end{proof}

The following result is the last one in this section.
Here, we show that our permutation model satisfies another type of recursion,
referred to as {\em consistency}.
It is not needed for the convergence argument, 
but it is useful in the next section for studying 
the expected pattern densities of $\murec_p$.

\begin{prop}
	\label{prop consistency of permutations}
	The sequence $(\incPermutations_n)_{n \ge 1} $ is a consistent family
	of random permutations,
	in the sense of \cite[Definition 2.8]{bassino2020universal}:
	namely, for $ n \ge 1 $, the permutation obtained by removing a uniformly random point from $ \lambda_{ n + 1 } $ is distributed as $\lambda_n$.
\end{prop}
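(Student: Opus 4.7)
The plan is to prove the statement by induction on $n$, leveraging the Markov recursion from \cref{prop permutation recursion}: $\incPermutations_{n+1} = (\incPermutations_n)_{S_n, R_n}$, where $R_n$ is uniform on $\{1, \dots, n\}$, $S_n$ is an independent sign with $\mathbb P(S_n = \oplus) = p$, and both are independent of $\incPermutations_n$. The base case $n = 1$ is immediate, as deleting any point from a size-$2$ permutation yields the unique permutation of size~$1$.

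For the inductive step, I would introduce $K$ uniform on $\{1, \dots, n+1\}$, independent of all other randomness, and analyze $(\incPermutations_{n+1})^{\downarrow K}$ in two cases. When $K \in \{R_n, R_n+1\}$, the defining property of inflation (as already used in the proof of \cref{prop permutation recursion}) gives $(\incPermutations_{n+1})^{\downarrow K} = \incPermutations_n$. When $K \notin \{R_n, R_n+1\}$, deletion and inflation act on disjoint positions, and I would establish the commutation
\[
(\incPermutations_{n+1})^{\downarrow K} = ((\incPermutations_n)^{\downarrow K'})_{S_n, R'_n},
\]
where $(K', R'_n) = (K, R_n-1)$ if $K < R_n$ and $(K', R'_n) = (K-1, R_n)$ if $K > R_n+1$.

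A short enumeration then shows that, conditional on $K \notin \{R_n, R_n+1\}$, the map $(K, R_n) \mapsto (K', R'_n)$ is a bijection onto $\{1, \dots, n\} \times \{1, \dots, n-1\}$: pairs $(k', r')$ with $k' \le r'$ come from the subcase $K < R_n$, and those with $k' > r'$ from the subcase $K > R_n+1$. Hence, after conditioning, $K'$ and $R'_n$ are independent uniform variables on $\{1, \dots, n\}$ and $\{1, \dots, n-1\}$ respectively, and the triple $(K', R'_n, S_n)$ remains independent of $\incPermutations_n$. By the inductive hypothesis at size $n$, $(\incPermutations_n)^{\downarrow K'} \stackrel{d}{=} \incPermutations_{n-1}$, and it is independent of $(R'_n, S_n) \stackrel{d}{=} (R_{n-1}, S_{n-1})$. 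Applying \cref{prop permutation recursion} once more at size $n-1$ then gives
\[
((\incPermutations_n)^{\downarrow K'})_{S_n, R'_n} \stackrel{d}{=} (\incPermutations_{n-1})_{S_{n-1}, R_{n-1}} = \incPermutations_n,
\]
so that combining with the first case yields $(\incPermutations_{n+1})^{\downarrow K} \stackrel{d}{=} \incPermutations_n$ and closes the induction.

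The main obstacle is proving the commutation identity in the second case. Although the claim is intuitively clear---inflation at $R_n$ and deletion at $K \notin \{R_n, R_n+1\}$ act on disjoint positions and ought to commute after suitable index shifts---making it rigorous requires unwinding the definitions of $(\cdot)_{s,k}$ and $(\cdot)^{\downarrow k}$ on one-line notation and checking that the value shifts (increments above $\incPermutations_n(R_n)$ when inflating, decrements above $\incPermutations_n(K)$ when deleting) are mutually consistent. This reduces to a short but attentive case analysis on whether $K$ lies before or after $R_n$ and on the sign of $\incPermutations_n(R_n) - \incPermutations_n(K)$.
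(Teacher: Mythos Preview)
Your proof is correct and follows essentially the same approach as the paper: the same induction on $n$, the same two-case split on whether $K \in \{R_n, R_n+1\}$, the same commutation identity (the paper's variables $(X,Y)$ coincide exactly with your $(K', R'_n)$), and the same conditional distributional analysis using the induction hypothesis. The only point the paper makes more explicit is that $\lambda_n$ is independent of the splitting event $\{K \in \{R_n, R_n+1\}\}$, which you need when ``combining'' the two cases into the unconditional statement.
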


\begin{proof}
	To avoid repetition, let us take the following convention throughout the proof:
    each new random variable introduced in this proof should be assumed to be independent of all previously defined random variables.

	The $ n = 1 $ case of the proposition is trivial.
	Proceeding by induction, we fix $ n > 1 $ and assume that the result holds for $ n - 1 $.
	Let $ j $ be a uniformly random integer in $ \{ 1, \ldots, n + 1 \} $
	and
	$ A $ be the event
	$
			\{
				R_n \le j \le R_n + 1
			\}
		.
	$
	As we will show later, the following statements hold:
	\begin{enumerate}[ label = (\roman*) ]
		\item \label{item:lambda_ind_A}
		$ \lambda_n $ is independent of $ A $,
		\item \label{item:on_A}
		$ 
			\lambda_{ n + 1 }^{ \downarrow  j }
				=
					\lambda_n
		$
	 	on $ A $,
		and
		\item \label{item:on_Ac}
		conditionally given $ A^c $, 
		$ 
			\lambda_{ n + 1 }^{ \downarrow  j }
		$
		is distributed as
		$ 
			\lambda_n.
		$
	\end{enumerate}

	\noindent
	The result follows from these statements. Indeed,
	if $ \tau $ is a permutation of $ n $, then
	\[
			\prob(
				\lambda_{ n + 1 }^{ \downarrow  j }
					=
						\tau
			)
				 = 
						\prob(
            				\lambda_{ n + 1 }^{ \downarrow  j }
            					=
            						\tau
						\vert
							A^c						
						)
						\prob( A^c )
					+
						\prob(
            				\lambda_{ n + 1 }^{ \downarrow  j }
            					=
            						\tau
						\vert
							A						
						)
						\prob( A ).
						\]
		Provided that \cref{item:on_Ac} holds, we have
		$\prob(
            				\lambda_{ n + 1 }^{ \downarrow  j }
            					=
            						\tau
						\vert
							A^c						
						)=
						\prob(
            				\lambda_n
            					=
            						\tau				
						)
				$, while \cref{item:lambda_ind_A,item:on_A} would imply
				$\prob(
            				\lambda_{ n + 1 }^{ \downarrow  j }
            					=
            						\tau
						\vert
							A						
						)=\prob(
            				\lambda_ n             					=
            						\tau
						\vert
							A						
						)=\prob(
            				\lambda_ n             					=
            						\tau)$.
	Therefore, given \cref{item:lambda_ind_A,item:on_A,item:on_Ac},
	one has 	
				\[
			\prob(
				\lambda_{ n + 1 }^{ \downarrow  j }	=\tau)
				 = 
						\prob(
            				\lambda_n
            					=
            						\tau				
						)
						\prob( A^c )
					+
						\prob(
            				\lambda_n
            					=
            						\tau					
						)
						\prob( A )
				 = 
						\prob(
            				\lambda_n
            					=
            						\tau				
						)
				.
 \]

	It remains, then, to verify the above statements.
	As explained in the proof of Proposition \ref{prop these are recursive separable permutations}, $\lambda_n$ is independent from $R_n$. Moreover, by construction,
	$j$ is independent from $(\lambda_n,R_n)$. Thus $\lambda_n$ is independent
	from $(j,R_n)$ and hence from the event $A$, proving \cref{item:lambda_ind_A}.

    Moreover it follows from Proposition~\ref{prop permutation recursion}
    that, a.s., $\lambda_{ n + 1 }^{ \downarrow  R_n }=\lambda_{ n + 1 }^{ \downarrow  R_n\! +\!1}=\lambda_n$, implying  \cref{item:on_A}.
	For the third statement, we introduce the random variables
	\begin{align*}
		X
			& = 
				j - \indicator( j > R_n )  
			,
			\\
		Y
			& = 
				R_n - \indicator( R_n > j )  
			.
	\end{align*}
	Using Proposition~\ref{prop permutation recursion} and an obvious commutation relation between
    the operator $\downarrow j$ and $(\cdot)_{s,k}$, we have
	$$
		\lambda_{ n + 1 }^{ \downarrow  j }
        = \big( (\lambda_n)_{S_n,R_n} \big)^{ \downarrow  j } =
            ( \lambda_n^{ \downarrow X })_{S_n,Y},
			\quad
				\text{on } A^c
			.
	$$

	\noindent
	Notice also that
	$ S_n $, $ \lambda_n $, $ X $, and $ Y $ are mutually independent given $ A^c $, 
	$ S_n $ and $ \lambda_n $ are independent of $ A^c $,
    $ X $ is uniformly distributed in $\{1,\ldots, n\} $ conditionally given $ A^c $, 
	and 
    $ Y $ is uniformly distributed in $\{1,\ldots, n - 1\} $ conditionally given $ A^c $.
	Therefore, the conditional distribution of $ ( S_n, \lambda_n, X, Y ) $ given $ A^c $ 
	is exactly the distribution of
	$ ( S_n, \lambda_n, k, \ell ) $, 
	where
    $ k $ is a uniformly random integer in $ \{1,\dots, n \} $
	and
    $ \ell $ is a uniformly random integer in $\{1,\dots, n - 1 \} $.
	Recalling from Proposition \ref{prop these are recursive separable permutations} that
	$ \incPermutations_{ n - 1 } $, $ S_{ n - 1 } $, and $ R_{ n - 1 } $ are mutually independent and making use of the induction hypothesis, we find that conditionally given $ A^c $, 
    \[
		\lambda_{ n + 1 }^{ \downarrow  j }
		= (\lambda_n^{ \downarrow X })_{S_n,Y}
			 \stackrel{d} =
            ( \lambda_n^{ \downarrow k } )_{S_n,\ell}
			 \stackrel{d} =
             ( \lambda_{ n - 1 } )_{S_{n-1},\ell}
			\stackrel{d} =
				\lambda_n
			.
            \]
	\noindent
	This establishes the third fact and concludes the proof.
\end{proof}

\subsection{The convergence argument}
\begin{proof}[Proof of Theorem \ref{thm:convergence}]
        We recall that the event \[\mc E
		=
			\Bigg
			\{
				( U_j )_{ j \ge 1 }
				\text{ are distinct},
				\,
			\sup_{ x \in [ 0, 1 ] }
			 	\bigg|
					x 
				- 
					\frac{1}{n} 
					\sum_{ j = 1 }^n 
						\indicator( U_j \le x )
				\bigg|
						\xrightarrow[ n \to \infty ]{}
							0
			\Bigg
			\}\]
	holds almost surely.
        Also, since the permutations $ ( \incPermutations_n )_{ n \ge 1 } $ are recursive separable permutations (see Proposition~\ref{prop these are recursive separable permutations}), 
        it suffices to show the permuton convergence 
        $$
        	\mu_{ \incPermutations_n }
        		\xrightarrow[ n \to \infty ]{}
        			\murec_p
        			\quad
        			\text{on } \mc E
        		.
        $$
        
        \noindent
        Making use of (\ref{remark describing permutations as pushforwards}), the identity
        $
        	\murec_p
        		=
        			( \, \cdot \,, \phi( \, \cdot \, ) )_{ \# }
        			\Leb
        		,
        $
		and Proposition \ref{prop characterization pushforward convergence}, we can reformulate this convergence as the convergence of functions
        $$
        	f_{ \incPermutations_n }
        		\longrightarrow
        			\phi
        				\quad
            			\text{in } L^1[ 0, 1 ]
                			\quad
                			\text{on } \mc E
        		.
        $$
        
        \noindent
        We will establish this convergence by showing that 
        $
        	f_{ \incPermutations_n }
        		\to
        			\phi
        $
        pointwise almost everywhere in $ [ 0 , 1 ] $ whenever $ \mc E $ occurs.

	To this end, fix
	$ 
       	x 
       		\in 
       			( 0, 1 ]
       			\setminus 
       			\{
       				U_j
       			\}_{ j \ge 1 }
	$
	and suppose that $ \mc E $ occurs.
	Since $ x > 0 $, the quantity $ \ceiling{ n x } $ lies in $\{ 1, \ldots, n \} $ and we can write
	\begin{multline}
	\label{eq:diff_flambda_phi}
		\left|
        	f_{ \incPermutations_n }( x )
		-
			\phi( x )
		\right|
			 \le 
            		\left|
                    	f_{ \incPermutations_n }( x )
            		-
        				P_n
            				(\left\{ 
            					y
            				:
            					y \prec U_{ ( \ceiling{ n x } - 1, n - 1 ) }
            				\right\}) 
            		\right|
			\\
			 				+
            		\left|
        				P_n
            				(\left\{ 
            					y
            				:
            					y \prec U_{ ( \ceiling{ n x } - 1, n - 1 ) }
            				\right\}) 
            		- 
            			\phi( U_{ ( \ceiling{ n x } - 1 , n - 1 ) } )
            		\right|
			 				+
            		\left|
            			\phi( U_{ ( \ceiling{ n x } - 1 , n - 1 ) } )
            		-
            			\phi( x )
            		\right|
			.
	\end{multline}

	\noindent
	Corollary \ref{cor discretization estimate}\ref{claim convergence of empirical measures over intervals in the new order} tells us that the second term above will converge to zero as $ n \to \infty $.
	The third term also converges to zero: 
	indeed, Proposition \ref{prop consequences Glivenko-Cantelli} implies that
	\begin{align*}
		\left|
			x
		-
			U_{ ( \ceiling{ n x } - 1, n - 1 ) }
		\right|
			& \le
            		\left|
            			x
            		-
            			U_{ ( \ceiling{ n x }, n - 1 ) }
            		\right|
				+
            		\left|
            			U_{ ( \ceiling{ n x }, n - 1 ) }
            		-
            			U_{ ( \ceiling{ n x } - 1 , n - 1 ) }
            		\right|
			\\
			& \le
            		\left|
            			x
            		-
            			U_{ ( \ceiling{ n x }, n - 1 ) }
            		\right|
				+
            		\Delta_{ n - 1 }
			\\
			& \quad \xrightarrow[ n \to \infty ]{}
				0
			,
	\end{align*}
	\noindent
	and $ \phi $ is continuous at $ x $ (see Proposition \ref{properties of phi}\ref{claim continuity of phi}). 
	It only remains, then, to show that the first term in the upper bound \eqref{eq:diff_flambda_phi} also converges to zero.
    This follows from the definition of $\incPermutations_n$ and Eq.~\eqref{eq formula sigma}, which allows us to write
    \begin{align*}
    	f_{ \incPermutations_n }( x )
    		 =
    			\frac 	{
    						\incPermutations_n( \ceiling{ n x } )
    					}{
    						n
    					}
			 &=
        			\frac{ 1 }{ n }
    			+
    				\frac{ 1 }{ n }
        			\sum_{ j = 0 }^{ n - 1 }
        				\indicator
        					\left( 
                            	U_j
                        			\prec 
                        				U_{ ( \ceiling{ n x } - 1, \, n - 1 ) } 
        					\right)\\
    		 &=
        			\frac{ 1 }{ n }
				+
    				{P_n}
        				(\left\{ 
        					y
        				:
        					y \prec U_{ ( \ceiling{ n x } - 1, n - 1 ) }
        				\right\}) 
			.
\end{align*}
Hence, whenever $\mc E$ occurs, for any 
$x \in ( 0, 1 ] \setminus \{U_j\}_{ j \ge 1 }$, the quantity $f_{ \incPermutations_n }( x )$
tends to $\phi(x)$. This concludes the proof.
\end{proof}

\section{Properties of the recursive separable permuton}
\label{sec:properties}

\subsection{Self-similarity}
\label{sec:SelfSimilarity}
{Fix $p \in(0,1)$. We recall informally the definition of $\Phi_p$ from the introduction.
If $\bm \mu$ is a random permuton, then $\Phi_p(\bm \mu)$
is the random permuton obtained as follows.}
\begin{itemize}
\item {We let $U$ and $S$ be independent random variables,
$U$ being uniform in $[0,1]$ and $S$ being $\oplus$ with probability $p$ and $\ominus$ with probability $1-p$.}
\item {We construct $\Phi_p(\bm \mu)$ by juxtaposing two independent
copies $\bm \mu_0$ and $\bm \mu_1$ of $\bm \mu$, where $\bm \mu_0$ is scaled by a factor $U$,
while $\bm \mu_1$ is scaled by a factor $1-U$. If $S=\oplus$, we put $\bm \mu_0$ below and on the left of $\bm \mu_1$, while, if   $S=\ominus$,
we put $\bm \mu_0$ above and on the left of $\bm \mu_1$.}
\end{itemize}
{The goal of this section is to prove Proposition~\ref{prop:self_similarity},
which states that the law of $\murec_p$ is the unique probability distribution invariant by $\Phi_p$.}

{We first prove the uniqueness of such an invariant probability distribution.}
For this, we see, with a small abuse of notation, $\Phi_p$ as a map from
the set $\mathcal M_1(\mathcal P)$ of probability measures on the set $ \mc P $ of permutons to itself.
Both $\mathcal P$ and $\mathcal M_1(\mathcal P)$ 
are endowed with the first Wasserstein distance; to avoid confusion, we will use a boldface notation $\bdW$ for the distance on $\mathcal M_1(\mathcal P)$
and a standard $d_W$ for that on $\mathcal P$.

\begin{lemm}
The map $\Phi_p:\mathcal M_1(\mathcal P) \to \mathcal M_1(\mathcal P)$
is a contraction with Lipschitz constant at most $2/3$.
\label{lem:PhiP_contraction}
\end{lemm}
\begin{proof}
Let $\mathbb P$ and $\mathbb Q$ be two probability measures
on the set $\mathcal P$ of permutons and call $d:=\bdW(\mathbb P,\mathbb Q)$ their Wasserstein distance.
Fix $\eps>0$. By definition of Wasserstein distance,
one can find random permutons $\bm\mu$ and $\bm\nu$ on the same probability space
such that $\bm\mu$ has distribution $\mathbb P$, $\bm\nu$ has distribution $\mathbb Q$
and \[\mathbb E(d_W(\bm\mu, \bm\nu)) <d+\eps.\]
We consider two independent copies $(\bm\mu_0,\bm\nu_0)$ and $(\bm\mu_1,\bm\nu_1)$
of the pair $(\bm\mu,\bm\nu)$. We also consider a single pair $(U,S)$,
where $U$ is a uniform random variable in $[0,1]$ and $S$ a random sign with $\mathbb P(S=\oplus)=p$, such that $U$ and $S$ are independent from each other and from 
$(\bm\mu_0,\bm\nu_0)$ and $(\bm\mu_1,\bm\nu_1)$.
By definition of $\Phi_p$, the measure $\Phi_p(\mathbb P)$ (resp. $\Phi_p(\mathbb Q)$) 
is the distribution
of the random permuton $\bm\mu_0 \otimes_{(U,S)} \bm\mu_1$ 
(resp.~$\bm\nu_0 \otimes_{(U,S)} \bm\nu_1$). Therefore
\begin{equation}
\label{eq:bouding_distances_afterPhiP}
 \bdW(\Phi_p(\mathbb P),\Phi_p(\mathbb Q))
\le \mathbb E \big( d_W(\bm\mu_0 \otimes_{(U,S)} \bm\mu_1, 
\bm\nu_0 \otimes_{(U,S)} \bm\nu_1) \big).
\end{equation}
It is straightforward to see that, a.s.,
\begin{equation}
\label{eq:boud_dW_otimesUS}
d_W\big( \bm\mu_0 \otimes_{(U,S)} \bm\mu_1, 
\bm\nu_0 \otimes_{(U,S)} \bm\nu_1 \big) \le U^2 d_W\big( \bm\mu_0,\bm\nu_0\big)
+(1-U)^2 d_W\big( \bm\mu_1,\bm\nu_1\big),
\end{equation}
where the factor $U^2$ is explained by the rescaling of distances and 
of weights of $\bm\mu_0$, both by a factor $U$ in the construction of $\bm\mu_0 \otimes_{(U,S)} \bm\mu_1$ (and similarly the factor $(1-U)^2$ comes from the rescaling
of distances and weigths in $\bm\mu_1$ by $1-U$).
From Eqs.~\eqref{eq:bouding_distances_afterPhiP} and~\eqref{eq:boud_dW_otimesUS},
 using the independence of $U$ from $(\bm\mu_0,\bm\nu_0)$ and $(\bm\mu_1,\bm\nu_1)$
 and the equality $(\bm\mu_0,\bm\nu_0)\stackrel{d}=(\bm\mu_1,\bm\nu_1)$, we get
\[ \bdW(\Phi_p(\mathbb P),\Phi_p(\mathbb Q))
\le \mathbb E\big(U^2 +(1-U)^2\big) \, \mathbb E\big(d_W\big( \bm\mu_0,\bm\nu_0\big) \big) < \tfrac23 (d+\eps).\]
Since this holds for any $\eps>0$, we have
\[ \bdW(\Phi_p(\mathbb P),\Phi_p(\mathbb Q)) \le \tfrac23 \, d=  \tfrac23 \bdW(\mathbb P,\mathbb Q).\]
This proves the lemma.
\end{proof}

Lemma~\ref{lem:PhiP_contraction} implies in particular the existence and uniqueness
of a probability measure $\mathbb P_p$ on the set of permutons such that
$\mathbb P_p =\Phi_p(\mathbb P_p)$.
We still need to identify $\mathbb P_p$ with the distribution of the recursive separable
permuton $\murec_p$.
To do this, we first show that the recursive separable {\em permutations} exhibit self-similarity 
and then use Theorem~\ref{thm:convergence} 
to carry this property over to the limit.
It might be possible to establish the self-similarity of the permuton directly, 
but we think that the self-similarity of the permutations is of its own interest.

Below, we denote the distribution of a random variable $ X $ by $\Law( X ) $.
In addition, we use the notion of skew sums and direct sums of permutations introduced in Section~\ref{ssec:pattern_densities_intro}.
 \begin{prop}
 \label{prop:self_similarity_discrete}
  Let $(\sigma^{(n),p})_{n\ge 1}$, $(\tau^{(n),p})_{n\ge 1}$ and $(\rho^{(n),p})_{n\ge 1}$
   be independent copies of the permutation process defined in Section~\ref{ssec:model},
  and let $I$ be a uniform integer in $\{1,\dots,n-1\}$.
  Then, for any fixed $n \ge 1$, we have
  \[ \Law(\sigma^{(n),p}) = p\, \Law(\tau^{(I),p} \oplus \rho^{(n-I),p} )
  +(1-p) \Law(\tau^{(I),p} \ominus \rho^{(n-I),p} ).\]
 \end{prop}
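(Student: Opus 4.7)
The plan is induction on $n$, with the base case $n=2$ checked directly: the process produces the permutation $12$ with probability $p$ and $21$ with probability $1-p$, while the right-hand side gives exactly the same mixture since $I=1$ a.s.\ and $1 \oplus 1 = 12$, $1 \ominus 1 = 21$.

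For the inductive step from $n$ to $n+1$, I would realize $\sigma^{(n+1),p}$ as $\sigma^{(n),p}$ followed by one inflation step (pick $j$ uniformly in $\{1,\dots,n\}$, inflate the point of value $j$ with an independent random sign). Using the induction hypothesis, I can write $\sigma^{(n),p} \stackrel{d}{=} \tau^{(J),p} \star \rho^{(n-J),p}$, where $J$ is uniform in $\{1,\dots,n-1\}$ and $\star$ is an independent random sign ($\oplus$ w.p.\ $p$). The key observation is that in both the $\oplus$ and the $\ominus$ cases, the set of values belonging to the $\tau$-part of $\sigma^{(n),p}$ has size exactly $J$. Hence with probability $J/n$ the inflation falls inside the $\tau$-part, and conditionally on this event it acts as a fresh uniform inflation step applied to $\tau^{(J),p}$; by the Markov description of the process, the result is distributed as $\tau^{(J+1),p}$. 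Symmetrically, with probability $(n-J)/n$ the step acts on $\rho^{(n-J),p}$ and produces $\rho^{(n-J+1),p}$. Crucially, the root sign $\star$ and the overall $\star$-sum structure are preserved.

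Letting $I'$ denote the size of the left part in this new $\star$-decomposition of $\sigma^{(n+1),p}$, a short calculation gives
\[
\mathbb P(I'=k) \;=\; \mathbb P(J=k-1)\,\tfrac{k-1}{n} \;+\; \mathbb P(J=k)\,\tfrac{n-k}{n} \;=\; \tfrac{1}{n}, \qquad k\in\{1,\dots,n\}
\]
(with the obvious adjustments at $k=1$ and $k=n$), so $I'$ is uniform on $\{1,\dots,n\}$. Moreover, conditionally on $\{I'=k,\,\star\}$, one of the two parts is unchanged and hence distributed as a recursive separable permutation of its size by the induction hypothesis, while the other underwent one further step of its own recursive process and is therefore distributed as a recursive separable permutation of size one larger. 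The two parts remain independent, and $\star$ remains independent of $I'$ with the same Bernoulli distribution, closing the induction.

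The only delicate point is verifying that the inflation step applied to $\tau^{(J),p} \star \rho^{(n-J),p}$ really does restrict to a uniform inflation step inside the chosen side: in the $\oplus$ case this is immediate, but in the $\ominus$ case one has to track the value shift induced by the skew sum. I expect this bookkeeping, rather than any deep ingredient, to be the main obstacle in writing out the details.
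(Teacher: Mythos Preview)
Your induction is correct, and the key computation---that the new left-part size $I'$ is uniform on $\{1,\dots,n\}$---is exactly the P\'olya urn recursion underlying the result. The one point that deserves a line or two more care is the conditional independence: given $I'=k$, you are mixing two scenarios (left part inflated vs.\ right part inflated), and you need to observe that in \emph{each} scenario separately the pair (left, right) has the product law $\Law(\sigma^{(k),p})\otimes\Law(\sigma^{(n+1-k),p})$, hence so does the mixture. This is true, but your current phrasing (``one of the two parts is unchanged\dots the other underwent one further step'') reads as though which part moved is determined by $I'$, when in fact it is still random.

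The paper takes a somewhat different route: rather than inducting on $n$, it conditions on $\sigma^{(2),p}$ and tracks, for every $n$ simultaneously, the descendants in $\sigma^{(n),p}$ of each of the two initial points. This gives an \emph{explicit} decomposition $\sigma^{(n),p}=\tau\star\rho$ on the same probability space, where independence of $\tau$ and $\rho$ (given their sizes) is immediate because inflations in disjoint blocks are independent, and the size of $\tau$ is identified as a classical P\'olya urn, hence uniform. Your approach trades this process-level coupling for a purely distributional induction via the Markov property; it works just as well, but requires the extra mixture/independence check above that the paper's explicit coupling sidesteps.
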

 We note that this is an equality for fixed $n \ge 1$, and it does not
 extend to a recursive description of the law of the process $(\sigma^{(n),p})_{n\ge 1}$.
 \begin{proof}
 For $n=2$, we have $\sigma^{(2),p}=12$ with probability $p$
 and $\sigma^{(2),p}=21$ with probability $1-p$.
 We will show that conditionally on $\sigma^{(n),p}=12$,
 we have 
 \begin{equation}
 \label{eq:Law_SigmaN_Knowing_Sigma2}
 \Law(\sigma^{(n),p} | \sigma^{(2),p}=12) = 
 \Law(\tau^{(I),p} \oplus \rho^{(n-I),p} ).
 \end{equation}
 We recall that $(\sigma^{(n),p})_{n\ge 1}$ is defined recursively via inflation operations,
 where one point is replaced by two adjacent points
 (either in ascending or descending positions).
 In such operations, we will think at the two new points as the \enquote{children}
  of the point they replace.
  This defines an ascendant/descendant relation on the set of points of all $\sigma^{(n),p}$
  for $n \ge 1$.
  
  In particular, points in $\sigma^{(n),p}$ can be split in two parts,
 defined as the descendants of the points $1$ and $2$ in $\sigma^{(2),p}$.
 Let us call $\tau$, resp.~$\rho$, the pattern formed the descendants of $1$, resp.~$2$.
 Since points are always replaced by pairs of adjacent points,
  when $\sigma^{(2),p}=12$, we have $\sigma^{(n),p} = \tau \oplus \rho$.
Conditionally of its size, which we call $k$, the permutation $\tau$
has the same distribution as $\tau^{(k),p}$ since it is obtained from $1$ 
by successive random inflations. Similarly, $\rho$ has the same distribution as $\rho^{(n-k),p}$. Moreover, both are independent conditionally on $k$.

Therefore, the only remaining thing to be proven in order to establish~\eqref{eq:Law_SigmaN_Knowing_Sigma2}
is that $k$ is uniformly distributed in $\{1,\dots,n-1\}$.
Letting $I_j$ being the number of descendants of $1$ in $\sigma^{(j),p}$,
we see that $(I_j)_{j \ge 1}$ has the Markov property and 
 \[
 \begin{cases}
\mbb P(I_j=k | I_{j-1}=k-1) = \frac{k-1}{j-1} \\
\mbb P(I_j=k | I_{j-1}=k) = \frac{j-1-k}{j-1}
\end{cases} 
 \]
Using this and the base case $I_2=1$ a.s., 
an immediate induction shows that, for any $j \ge 2$, the random variable
$I_j$ is uniformly distributed in $\{1,\dots,j-1\}$
(we remark that this is a basic model of P\'olya urn.)
This concludes the proof of Eq.~\eqref{eq:Law_SigmaN_Knowing_Sigma2}.

With similar arguments, one can prove
\begin{equation}
 \label{eq:Law_SigmaN_Knowing_Sigma2_Case2}
\Law(\sigma^{(n),p} | \sigma^{(2),p}=21) \stackrel{d}= 
 \Law(\tau^{(I),p} \ominus \rho^{(n-I),p} ).
 \end{equation}
 Since $\mbb P(\sigma^{(n),p}=12)=p=1-\mbb P(\sigma^{(n),p}=21)$,
 the proposition follows from Eqs.~\eqref{eq:Law_SigmaN_Knowing_Sigma2}
 and~\eqref{eq:Law_SigmaN_Knowing_Sigma2_Case2}.
 \end{proof}

\begin{proof}[Proof of Proposition~\ref{prop:self_similarity}]
Taking the limit $n \to +\infty$ in Proposition~\ref{prop:self_similarity_discrete} gives
$\murec_p \stackrel{d}= \Phi_p(\murec_p)$
(recall from Theorem~\ref{thm:convergence} that $\sigma^{(n),p}$ converge a.s., and hence
in distribution to $\murec_p$).
The uniqueness statement in Proposition~\ref{prop:self_similarity} follows 
from Lemma~\ref{lem:PhiP_contraction}.
\end{proof}

\subsection{Expected pattern densities}
\label{ssec:expected_pattern_densities}

\begin{proof}[Proof of Proposition~\ref{prop:expected_densities_murec}]
From Proposition~\ref{prop consistency of permutations}, we know that $(  \sigma^{(n),p})_{n \ge 1}$
is a consistent family of random permutations.
 By~\cite[Proposition 2.9]{bassino2020universal}, there exists a random permuton $\bm\mu$ such that  $\sigma^{(n),p}$ converge to $\bm\mu$ in distribution
 and, for each $n\ge 1$,
 the random permutation $\Sample(\bm\mu,n)$ has the same distribution as  $\sigma^{(n),p}$.
On the other hand, $\sigma^{(n),p}$ converges almost surely to $\murec_p$ (Theorem~\ref{thm:convergence}), so $\bm\mu$ and $\murec_p$ must be equal in distribution.
 Therefore,
 \[ \sigma^{(n),p} \stackrel{d}{=} \Sample(\bm\mu,n) \stackrel{d}{=} \Sample(\murec_p,n), 
 	\qquad n \ge 1.
 \]
 Together with \cite[Theorem 2.5]{bassino2020universal}, this implies that
 \[ \mathbb E\big[ \dens(\pi,\murec_p) \big] = \mathbb P\big[ \Sample(\murec_p,n)=\pi\big]= \mathbb P\big[ \sigma^{(n),p}=\pi \big],
 \qquad \pi \in S_n.
 \]
 This proves the first part of Proposition~\ref{prop:expected_densities_murec}.
 
 For the second part, we associate with a realization of the random sequence 
 $(  \sigma^{(k),p})_{k \le n}$ a (rooted binary increasing decorated) tree $T_n$ 
 such that $\Perm(T_n)=\sigma^{(n),p}$ (where we use the map $\Perm$
 from trees to permutations introduced in Section~\ref{ssec:pattern_densities_intro}).
 We proceed by induction on $n$.
 For $n=1$, the tree $T_1$ is reduced to a single leaf.
 For $n \ge 1$, assume that $T_n$ is constructed and 
 that in the sampling process for $\sigma^{(n+1),p}$ (Section~\ref{ssec:model}),
 we have chosen some integer $j$ and some sign $s$
 (where $s=\oplus$ means that $j$ is replaced by $j\, j\!+\!1$
 and $s=\ominus$ means that $j$ is replaced by $j\!+\!1 \, j$).
 Then $T_{n+1}$ is obtained by replacing the $j$-th leaf of $T_n$
 by an internal node with label $n$ and decoration $s$, with two children which are leaves.
 It can be verified, using the induction hypothesis $\Perm(T_n)=\sigma^{(n),p}$,
 that $\Perm(T_{n+1})=\sigma^{(n+1),p}$.
 Informally, $T_n$ encodes the history of the construction of $\sigma^{(n),p}$.
 An example is given on Figure~\ref{fig:history}.
 
 \begin{figure}
\[\includegraphics[width=\textwidth]{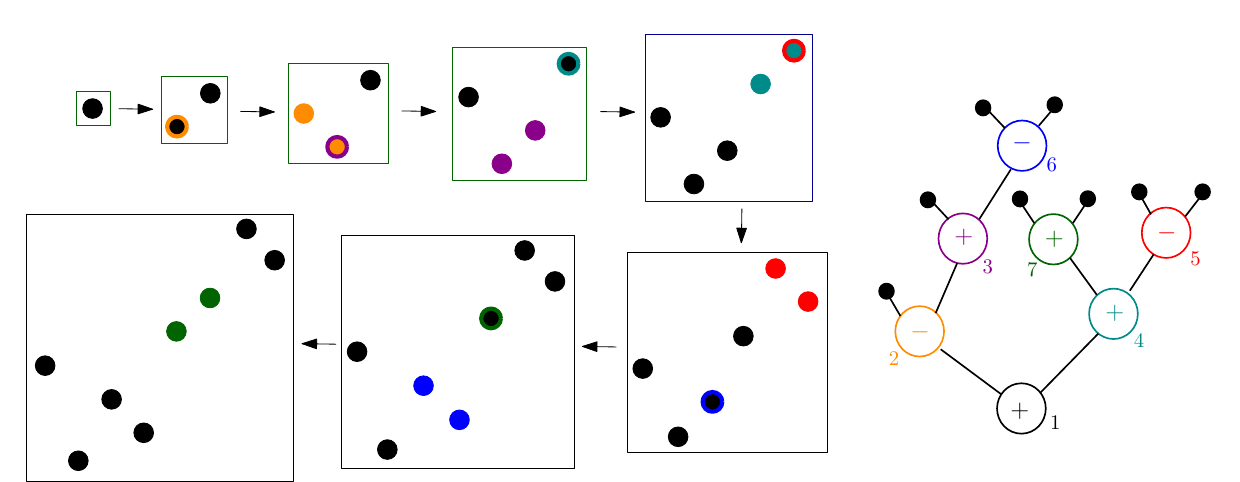}\]
\caption{A realization of the process $(\sigma^{(k),p})_{k \le 8}$ and the associated
tree $T_8$. Colors indicate points which have been selected for inflation at each step,
and the internal node created in the corresponding step in the construction of $T_n$.}
\label{fig:history}
\end{figure}
 
 The above construction yields a random tree $T_n$ living in the same probability space 
 as $(  \sigma^{(k),p})_{k \le n}$. For a fixed (rooted binary increasing decorated) tree $T$,
 the event $T_n=T$ amounts to making specific choices of integers $j$ and signs $s$ at each
 step of the construction of $(  \sigma^{(k),p})_{k \le n}$.
 Thus we have 
 \[ \mathbb P[T_n =T] = \frac{1}{(n-1)!} p^{\oplus(T)} (1-p)^{\ominus(T)},\]
 where $\oplus(T)$ and $\ominus(T)$ denote the number of $\oplus$ decorations
 and $\ominus$ decorations in $T$ respectively.
 Since $\Perm(T_n)=\sigma^{(n),p}$, this implies that for a given permutation $\pi$
 of $n$,
 \[ \mathbb P \big[ \sigma^{(n),p}=\pi \big] = \frac{1}{(n-1)!} \sum_{T: \Perm(T)=\pi} 
 p^{\oplus(T)} (1-p)^{\ominus(T)}.\]
 But an immediate inductive argument
 shows that whenever $ \Perm(T)=\si$, we have $\ominus(T)=\des(\sigma)$,
 and consequently $\oplus(T)=n-1-\des(\sigma)$.
 Therefore, all terms in the above sum are equal, and we obtain
 \[ \mathbb P \big[ \sigma^{(n),p}=\pi \big] = 
 \frac{N_{inc}(\pi)}{(n-1)!}\, (1-p)^{\des(\pi)}\, p^{n-1-\des(\pi)}.\qedhere\]
 \end{proof}

\subsection{Intensity measure}
\label{sec:Intensity}
Recall that the intensity measure of $\murec_p$, denoted $I \murec_p$,
is a probability measure on $[0,1]^2$ defined as follows:
for all Borel subsets $A$ of $[0,1]^2$, we have $I \murec_p(A)= \mathbb E[\murec_p(A)]$.
The goal of this section is to compute this intensity measure.
We start with a lemma.
\begin{lemm}
  \label{lem:Mickael}
 The intensity measure $I \murec_p$ of the recursive separable permuton
 is the distribution of $(U,\phi(U))$, where $U$ is a uniform random variable
 in $[0,1]$, $\phi$ is the random function of Section~\ref{sec:construction_permuton} ,
 and $U$ and $\phi$ are independent from each other.
\end{lemm}
\begin{proof}
{For any function $f$, we denote $f_2$ the map $x \mapsto(x,f(x))$.
Then, for any measurable set $A \subset [0,1]^2$,
we have
\[\mu_f(A) = \Leb\big(f_2^{-1}(A)\big) = \int_0^1 \One[(x,f(x)) \in A] dx = \mathbb E\big[ \One[(U,f(U)) \in A] \big],\]
where $U$ is a uniform random variable in $[0,1]$.
Recalling that $\murec_p=\mu_\phi$, where $\phi$ the random function of Section~\ref{sec:construction_permuton} ,
 we have
\[I \murec_p(A) = \mathbb E[\mu_\phi(A)] = \mathbb E[ \One[(U,\phi(U)) \in A] ],\]
where in the first expectation, only $\phi$ is random but, in the second one,
both $U$ and $\phi$ are random, independent from each other. This proves the lemma.}
\end{proof}
\begin{rema}
A similar statement (proved in the same way) for the Brownian separable permuton is given in \cite[Lemma 6.1]{maazoun2020BrownianPermuton},
and serves as a basis of Maazoun's computation of the intensity measure of the Brownian separable permuton.
\end{rema}

\subsubsection{Distributional equations}
To go further, we 
consider the following map $\Psi_p$ from the set $\mathcal M_1([0,1])$
of probability measures on $[0,1]$ to itself.
If $\nu$ is a probability measure on $[0,1]$, we let $X$ be a r.v.~ with distribution $\nu$,
and set 
\[Y= V \cdot B  + (1-V) \cdot X,\]
where $B$ is a Bernoulli variable with parameter $p$,
$V$ is a uniform random variable in $[0,1]$, 
and the two are independent from each other and from $X$.
 We then define $\Psi_p(\nu)$ as the distribution of $Y$.
 \begin{lemm}
  The equation $\nu=\Psi_p(\nu)$ has a unique solution in $\mathcal M_1([0,1])$.
  \label{lem:Xp}
\end{lemm}
This solution will be denoted $\nu_p$ from now on.
\begin{proof}
  We will prove that $\Psi_p$ is a contracting map from $\mathcal M_1([0,1])$
  to itself, when we equip it with the first Wasserstein metric.
  Namely, we let $\nu_1$ and $\nu_2$ be two probability measures of $[0,1]$
  and we shall prove that
  \begin{equation}\label{eq:Psi_Contraction}
    d_{W,1}\big(\Psi_p(\nu_1),\Psi_p(\nu_2)\big) \le \tfrac12 d_{W,1}\big(\nu_1,\nu_2\big). 
  \end{equation}
  Fix $\eps>0$ and choose r.v.~$(X_1,X_2)$ on the same probability space
  such that
  \[ \mathbb E\big[\, |X_1-X_2|\, \big] \le  d_{W,1}\big(\nu_1,\nu_2\big) + \eps.\]
  We then take a Bernoulli variable $B$ with parameter $p$ 
  and a  uniform random variable $V$ in $[0,1]$, independent from each other and from $(X_1,X_2)$.  
  We set 
  \[Y_1= V \cdot B + (1-V) \cdot X_1,\qquad Y_2= V \cdot B + (1-V)\cdot X_2.\]
  Note that the variables $V$ and $B$ used to define $Y_1$ and $Y_2$ are the same.
  We have
  \[
    \mathbb E\big[\, |Y_1-Y_2|\, \big] \le  \mathbb E\big[\,|1-V| \cdot |X_1-X_2|\, \big] 
    \le   \tfrac12 \mathbb E\big[\, |X_1-X_2|\, \big] \le \tfrac12 d_{W,1}\big(\nu_1,\nu_2\big) +\tfrac12 \eps.
    \]
  The random variables $Y_1$ and $Y_2$ are defined on the same probability space,
  and have distributions $\Psi_p(\nu_1)$ and $\Psi_p(\nu_2)$ respectively.
  By definition of the Wasserstein distance, we have
  \[d_{W,1}\big(\Psi_p(\nu_1),\Psi_p(\nu_2)\big) \le \mathbb E\big[\, |Y_1-Y_2|\, \big]\le \tfrac12 d_{W,1}\big(\nu_1,\nu_2\big) +\tfrac12 \eps.
\]
Since this holds for any $\eps>0$, we have proved Eq.~\eqref{eq:Psi_Contraction}.
We conclude with Banach fixed point theorem that $\Psi_p$ has exactly one fixed point,
concluding the proof of the lemma.
\end{proof}

\begin{prop}
\label{prop:intensity_UXp}
The intensity measure $I \murec_p$ of the recursive separable permuton
is the distribution of
\[ (U, \, U X_p +(1-U) X_{1-p}),\]
where $U$, $X_p$ and $X_{1-p}$ are independent r.v.~with distribution
$\Leb([0,1])$, $\nu_p$ and $\nu_{1-p}$ distribution respectively.
\end{prop}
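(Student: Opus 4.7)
The plan is to identify $I\murec_p$ as the unique fixed point of a contraction operator $F$ on $\mathcal M_1([0,1]^2)$ arising from the self-similarity of \cref{prop:self_similarity}, and to verify that the candidate $\mu^{\ast} := \Law\!\big(U, \, UX_p + (1-U)X_{1-p}\big)$ satisfies $F(\mu^{\ast}) = \mu^{\ast}$.

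First, by \cref{prop:self_similarity} and linearity of expectation, $I\murec_p = F(I\murec_p)$, where $F$ sends $\mathcal I \in \mathcal M_1([0,1]^2)$ to the law of the pair $(X,Y)$ obtained as follows: draw i.i.d.\ samples $(X^{(0)}, Y^{(0)}), (X^{(1)}, Y^{(1)}) \sim \mathcal I$ independent of an auxiliary triple $(V, B, S)$, with $V \sim \Leb$, $B \in \{0,1\}$ of conditional law Bernoulli$(V)$ given $V$, and $S$ a sign with $\mathbb P(S = \oplus) = p$; then $(X,Y)$ is the affine transformation of $(X^{(B)}, Y^{(B)})$ dictated by the construction of $\bm\mu_0 \otimes_{(V,S)} \bm\mu_1$ in each of the four $(B,S)$-branches (for instance, $(B,S) = (1,\oplus)$ yields $(VX^{(0)}, VY^{(0)})$). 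Coupling samples from $F(\mathcal I_1)$ and $F(\mathcal I_2)$ using the same $(V,B,S)$ and optimal couplings of $\mathcal I_1, \mathcal I_2$, the affine maps contract distances by factors $V$ or $1-V$ whose conditional mean given $B$ equals $\tfrac{2}{3}$; mimicking the proof of \cref{lem:PhiP_contraction}, $F$ is therefore a $\tfrac{2}{3}$-contraction for the $1$-Wasserstein metric and admits a unique fixed point.

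It then suffices to verify $F(\mu^{\ast}) = \mu^{\ast}$. Both measures have uniform first marginal, so I would disintegrate over it. Conditionally on $X = u$, the four branches split into $\{B = 1\}$ (conditional probability $1 - u$, with $V$ uniform on $[u,1]$) and $\{B = 0\}$ (probability $u$, with $V$ uniform on $[0,u]$). On $\{B = 1\}$, parameterizing $V = u + (1-u)W$ with $W$ uniform, the second coordinate of a sample from $F(\mu^{\ast})$ equals $uX_p + (1-u)\big[\indicator_{\{S = \ominus\}}(1 - W) + W X_{1-p}\big]$; setting $B' := \indicator_{\{S = \ominus\}} \sim \mathrm{Bern}(1-p)$ and $W' := 1 - W$, the bracket becomes $B'W' + (1-W')X_{1-p}$, which has law $\nu_{1-p}$ by the $\Psi_{1-p}$-invariance from \cref{lem:Xp}. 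The symmetric computation on $\{B = 0\}$ yields $u\tilde X_p + (1-u)X_{1-p}$ with $\tilde X_p \sim \nu_p$ via $\Psi_p$-invariance of $\nu_p$, and averaging over $B$ recovers $\mu^{\ast}$.

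The main obstacle is this final verification, where the four branches must be combined correctly. The key observation is that the random sign $S$ of the self-similarity plays exactly the role of the Bernoulli variable in the definition of $\Psi_p$: this matching lets the ``extra'' randomness $V$ be absorbed into the uniform variable of the $\Psi_p$ construction, so that the fixed-point equations for $\murec_p$ and for $\nu_p, \nu_{1-p}$ mesh precisely. Once this is identified, the remainder is routine bookkeeping.
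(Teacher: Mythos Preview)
Your proof is correct and takes a genuinely different route from the paper's. The paper works directly with the explicit construction of $\murec_p$ from \cref{sec:construction}: via \cref{lem:Mickael}, $I\murec_p$ is the law of $(U,\phi(U))$, and the paper decomposes $\phi(U)=Y_1+Y_2$ according to whether $y<U$ or $y>U$ in the natural order; it then analyzes the nested sequence of ``first hits'' $V_1,V_2,\dots$ of the $U_i$ in $(U,1]$ (and symmetrically in $[0,U)$) to show directly that $\tilde Y_2:=Y_2/(1-U)$ satisfies the $\Psi_{1-p}$ fixed-point equation and $\tilde Y_1:=Y_1/U$ the $\Psi_p$ one. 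Your approach instead extracts from the self-similarity of \cref{prop:self_similarity} a fixed-point equation for the intensity measure itself, proves uniqueness by a $2/3$-contraction (the same mechanism as \cref{lem:PhiP_contraction}), and then checks algebraically that the candidate solves it by reducing to the $\Psi_p$- and $\Psi_{1-p}$-invariances of $\nu_p,\nu_{1-p}$. The paper's argument is more hands-on and exposes the probabilistic mechanism (the telescoping interval structure), while yours is more structural: it never returns to the explicit order $\prec$ and would carry over verbatim to any self-similar permuton whose building block produces the same affine recursion. One cosmetic slip: your example ``$(B,S)=(1,\oplus)$ yields $(VX^{(0)},VY^{(0)})$'' clashes with the notation $X^{(B)}$; the indices of $B$ and of the samples should be aligned, but this does not affect the mathematics.
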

\begin{proof}
  From Lemma~\ref{lem:Mickael}, $I \murec_p$ is the distribution of $(U,\phi(U))$,
  where $U$ is a uniform random variable in $[0,1]$ and $\phi$ the random mapping
  constructed in Section~\ref{sec:construction_permuton} .
  We write $\phi(U)=Y_1+Y_2$,
  with
  \[\begin{cases}
    Y_1 = \Leb\big( \{y : (y<U) \wedge (y \prec U) \big);\\
    Y_2 = \Leb\big( \{y : (y>U) \wedge (y \prec U) \big);
  \end{cases}\]
In each of these equations, the first comparison is for the natural order on $[0,1]$, while
  the second is for the random order $\prec$.

  We shall describe the distribution of $Y_2$, conditionally on $U$.
  Let $V_1$ be the {\em first} $U_i$ larger than $U$, \enquote{first} meaning here
   the one with smallest index.
  We then denote $V_2$ to be the first $U_i$ between $U$ and $V_1$ 
  and define $V_3, V_4 \ldots$ similarly.
  Clearly, setting for convenience $V_0=1$, we have
  \[ (V_{j+1}-U)_{j \ge 0} \stackrel{d}= \big( T_{j+1} \, (V_j-U)\big)_{j \ge 0},   \]
  where $(T_1,T_2,\cdots)$ is a sequence of i.i.d.~uniform random variable in $[0,1]$,
  independent from $U$.
  This implies
  \[ (V_{j})_{j \ge 0} \stackrel{d}= \big( U+(1-U)T_1 \cdots T_j \big)_{j \ge 0}.\]
  Each $V_j$ inherits a sign, i.e.~if $V_j=U_i$, we set $\varSigma_j=S_i$.
  By construction, if $\varSigma_j=\ominus$, every $y$ in the interval $[V_{j},V_{j-1}]$ satisfies
  $y \prec U$. On the other hand, $\varSigma_j=\oplus$, 
  every $y$ in the interval $[V_{j},V_{j-1}]$ satisfies $y \succ U$. Hence
  \[Y_2=\sum_{j \ge 1: \ \varSigma_j=\ominus} (V_{j-1}-V_j) = 
  (1-U) \sum_{j \ge 1: \ \varSigma_j=\ominus} T_1 \cdots T_{j-1} (1 -T_j).\]
  Setting $\tilde Y_2=Y_2/(1-U)$ and interpreting $\oplus$ as $1$ and $\ominus$ as $0$,
  we have
  \[ \tilde Y_2 = (1-\varSigma_1) (1-T_1) + T_1 \left(\sum_{j \ge 2: \ \varSigma_j=\ominus} T_2 \cdots (1-T_j) \right).\]
  Note that $ \tilde Y_2$ is independent from $U$.
  Moreover, the variable $1-\varSigma_j$ is a Bernoulli r.v.~of parameter $1-p$, while $T_1$
  is uniform in $[0,1]$.
  Finally, the sum in parentheses has the same distribution as $\tilde Y_1$,
  and is independent from $\varSigma_1$ and $T_1$.
  This shows
  that the distribution of $\tilde Y_2$ is a fixed point of $\Psi_{1-p}$.
  From Lemma~\ref{lem:Xp}, $\tilde Y_2$ has distribution $\nu_{1-p}$.

  With similar arguments, one can show that $Y_1=U \tilde Y_1$, where $\tilde Y_1$
  has distribution $\nu_p$ and is independent from $U$ (and from $\tilde Y_2$).
  This ends the proof of the proposition.
\end{proof}

\subsubsection{Explicit formulas for densities}
\begin{lemm}
  The unique solution $\nu_p$ of the equation $\nu_p=\Psi(\nu_p)$ is
  the beta distribution of parameters $(p,1-p)$. Explicitly it is given by
  \[\nu_p(dx)=\frac{1}{\Gamma(p)\, \Gamma(1-p)} x^{p-1} (1-x)^{-p} \, dx,\]
  where $\Gamma$ is the usual gamma function.
  \label{lem:density_Xp}
\end{lemm}
\begin{proof}
  Let $X$ be a random variable with distribution $\mathrm{Beta}(p,1-p)$,
and set $Y=V \cdot B+(1-V)\, X$, where $B$ and $V$ are as above.
We want to show that $Y\stackrel{d}= X$, which would imply the distribution
of $X$ is a fixed point of $\Psi$ as wanted.

Let $f$ be a continuous function on $[0,1]$. Setting $Z=\Gamma(p)\, \Gamma(1-p)$,
 we have
\begin{align}
  \mathbb E[f(Y)]&=\frac{p}{Z} \int_{[0,1]^2} f\big( v+(1-v)x \big)  x^{p-1} (1-x)^{-p}\, dvdx \nonumber\\
  &\qquad \qquad\qquad
  + \frac{1-p}{Z} \int_{[0,1]^2} f\big( (1-v)x \big) x^{p-1} (1-x)^{-p}\, dvdx \nonumber\\
  &\hspace{-3mm} = \frac{p}{Z}  \int_0^1 f(u)  \left(\int_0^u x^{p-1} (1-x)^{-p-1} dx\right) du \nonumber\\
  &\qquad \qquad\qquad
  + \frac{1-p}{Z} \int_0^1 f(u) \left(\int_u^1 x^{p-2} (1-x)^{-p} dx\right) du.
  \label{eq:Efy}
\end{align}
(In the first integral, we have performed the change of variables $u=v+(1-v)x$,
yielding $dv=\frac{dx}{1-x}$;
in the second, we set $u=(1-v)x$, yielding $dv=\frac{du}{x}$.)

We claim that there exists a constant $A$ in $\mathbb R$,
such that, for every $u$ in $(0,1)$,
\[ p \int_0^u x^{p-1} (1-x)^{-p-1} dx +(1-p) \int_u^1 x^{p-2} (1-x)^{-p} dx = u^{p-1}(1-u)^{-p} +A. \]
Indeed, one checks easily that both sides have the same derivative.
With this equality in hand, \eqref{eq:Efy} rewrites as
\begin{align*}
  \mathbb E[f(Y)]&=\frac{1}Z \int_0^1 f(u) u^{p-1}(1-u)^{p-1}du + \frac{A}{Z}\int_0^1 f(u) du \\
  &= \mathbb E[f(X)] + \frac{A}{Z}\int_0^1 f(u) du.
\end{align*}
Choosing for $f$ the function constant equal to 1 shows that necessarily $A=0$.
Thus we have that $\mathbb E[f(Y)]=\mathbb E[f(X)]$ for any continuous function $f$ of $[0,1]$,
implying that $X$ and $Y$ have the same distribution.
This ends the proof of the lemma.
\end{proof}
Propostion~\ref{prop:intensity_UXp} and Lemma~\ref{lem:density_Xp} imply
Proposition~\ref{prop:intensity_beta}. It remains to prove Corollary~\ref{corol:intensity_density}.

\begin{proof}[Proof of Corollary~\ref{corol:intensity_density}.]
Let $f$ be a continuous function from $[0,1]^2$ to $\mathbb R$.
From Proposition~\ref{prop:intensity_beta}, we have
\begin{multline*}
 {\Gamma(p)^2 \Gamma(1-p)^2}
 \int f(x,y)  \, I \murec_p(dx,dy) \\
=  \int_{[0,1]^3} f\big(u,ua+(1-u)b\big) \, du \, a^{p-1}(1-a)^{-p} da \, b^{-p} (1-b)^{p-1} db
\end{multline*}
We perform the change of variable
\[
x=u; \quad
y=ua+(1-u)b;\quad z=ua
\]
This maps bijectively the $(u,a,b)$-domain $[0,1]^3$ to the set
\[\{(x,y,z):\, \max(x+y-1,0) \le z \le \min(x,y) \}.\]
The Jacobian matrix of the transformation is 
\[J:=  \frac{\partial (x,y,z)}{\partial (u,a,b)} =
\begin{pmatrix} 1 & 0 &0 \\
a-b
  & u& 1-u \\ a &u& 0
\end{pmatrix},\]
whose determinant satisfies $|\det(J)|=u(1-u)=x(1-x)$.
Therefore we have
\begin{multline*}
 {\Gamma(p)^2 \Gamma(1-p)^2}  \int f(x,y) \, I \murec_p(dx,dy) 
=  \int_{[0,1]^2} \Bigg[ f(x,y) \cdot \\
  \cdot \left( \int_{\max(x+y-1,0)}^{\min(x,y)} \, \big(\tfrac{z}x \big)^{p-1}\big(1-\tfrac{z}x\big)^{-p} \, \big(\tfrac{y-z}{1-x}\big)^{-p} \big(1-\tfrac{y-z}{1-x}\big)^{p-1}  dz \right) \frac{dx\, dy}{x(1-x)} \bigg].
\end{multline*}
After elementary simplifications, we get
\begin{multline*}
 {\Gamma(p)^2 \Gamma(1-p)^2} 
 \int f(x,y)  \, I \murec_p(dx,dy) 
= \int_{[0,1]^2} \Bigg[ f(x,y) \cdot \\
\left(\int_{\max(x+y-1,0)}^{\min(x,y)} \frac{dz}{z^{1-p}(x-z)^p(y-z)^p(1-x-y+z)^{1-p}}  \right) dx\, dy \bigg].
\end{multline*}
This proves Corollary~\ref{corol:intensity_density}.
\end{proof}

\subsection{Mutual singularity of the  limiting permutons}
\label{ssec:Singular_Distributions}
In this section, we prove Proposition~\ref{prop:singularity} in two independent steps. First,
we prove the singularity of separable Brownian or recursive separable permutons 
associated with different values $p$ and $q$ of the parameter.
Then we compare specifically Brownian and recursive separable
permutons $\murec_p$ and $\muBr_p$, associated with the same value $p$ of the parameter.

Before starting the proof, let us recall, more formally than in Remark~\ref{rem:Brownian}, the construction of $\muBr_p$,
as given in \cite{maazoun2020BrownianPermuton}.
We start with a Brownian excursion $\exc$ on $[0,1]$
and a sequence $(S_m)_{m \in \Min(\exc)}$ of signs indexed by local minima of $\exc$.
Conditionally on $\exc$, the variables $(S_m)_{m \in \Min(\exc)}$ are i.i.d.~with distribution
\[\mbb P(S_m=\oplus)=p=1-\mbb P(S_m=\ominus).\]
Given such a sequence we define a partial order $\prec_{\text{Br}}$ as follows: for $x<y$ in $[0,1]$,
we let $m$ be the position of the minimum of $\exc$ on the interval $[x,y]$ 
and set
\[ \begin{cases}
  x \prec_{\text{Br}} y &\text{ if }S_m=\oplus;\\
y \prec_{\text{Br}} x &\text{ if }S_m=\ominus.
\end{cases}\]
Note that, if $m \in \{x,y\}$, then $m$ might not be a local minimum, in which case
$S_m$ is ill-defined and $x$ and $y$ are incomparable by convention.
This happens only for a measure $0$ subset of pairs $(x,y)$ (w.r.t.~Lebesgue measure).

The rest of the construction is then similar to that of the recursive separable permuton:
we define
 \[\phi_{\text{Br}}(x)= \Leb(\{y \in [ 0, 1 ] : y \prec_{\text{Br}} x\}),\]
and let $\muBr_p$ be the push-forward of the Lebesgue measure on $[0,1]$
by the map $x\mapsto (x,\phi_{\text{Br}}(x))$.

\subsubsection{Comparing permutons with different values of $p$}
We recall from Section~\ref{ssec:pattern_densities_intro}
that given a permuton $\mu$ and an integer $n \ge 1$,
we can define a random permutation $\Sample(\mu,n)$ 
by sampling independent points according to $\mu$.
Also, for a permutation $\pi$, we let $\des(\pi)$ be its number of descents.
We start with a lemma.
\begin{lemm}
\label{lem:number-of-descents}
Let $p$ be in $(0,1)$. Then the random variable
\[ D_n=D_n(\murec_p):= \frac1{n-1} \mbb E\big[ \des(\Sample(\murec_p,n) ) \big| \, \murec_p \big]. \]
converges to $1-p$ a.s. 
Moreover, the same holds replacing $\murec_p$ by $\muBr_p$.
\end{lemm}
\begin{proof}
We have
\[ \mbb E(D_n) = \frac1{n-1} \mbb E\big[ \des(\Sample(\murec_p,n) ) \big].\]
From the proof of Proposition~\ref{prop:expected_densities_murec}, 
the permutation $\Sample(\murec_p,n)$
has the same distribution as the permutation represented by a uniform random
increasing binary tree $T_n$, where each internal node is decorated with $\oplus$
independently with probability $p$.
As already observed, if $\pi$ is encoded by a binary decorated tree $T$,
then $\des(\pi)$ is the number of $\ominus$ signs in $T$.
Hence $\des(\Sample(\murec_p,n) )$ is the number of minus signs in $T_n$,
and its law is that of a binomial random variable $\Bin(n-1,1-p)$.
We deduce that $\mbb E(D_n) =1-p$.

To get a.s.~convergence, we consider the fourth centered moment of $D_n$.
We have, using Jensen's inequality for conditional expectation,
\begin{align*}
 \mbb E\big[ (D_n - (1-p))^4 \big] &= 
 \mbb E\bigg[ \mbb E \Big( \tfrac1{n-1} \des(\Sample(\murec_p,n) ) -(1-p)  \big|  \, \murec_p \Big)^4 \bigg]\\
& \le   \mbb E\bigg[ \mbb E \Big[\Big( \tfrac1{n-1}\des(\Sample(\murec_p,n) ) -(1-p) \Big)^4 \Big|  \, \murec_p \Big] \bigg]\\
& = \mbb E\bigg[ \Big( \tfrac1{n-1}\des(\Sample(\murec_p,n) ) -(1-p) \Big)^4  \bigg].
\end{align*}
The expectation in the right-hand side is the centered fourth moment 
 of the average of $n-1$ independent Bernoulli random variables of parameter $1-p$,
 which is known to behave as $O(n^{-2})$.
We get that
\[ \mbb E\big[ (D_n -  (1-p) )^4 \big] =O(n^{-2}),\]
and hence it is a summable quantity. By a classical application of Borel--Cantelli lemma,
this implies that $D_n$ converges a.s.~to $1-p$.

The proof for the Brownian separable permuton is similar.
Indeed, from \cite[Definition 2]{maazoun2020BrownianPermuton}, 
we know that the quantity $\des(\Sample(\muBr_p,n))$ is also distributed
as a binomial random variable $\Bin(n-1,1-p)$.
\end{proof}

\begin{coro}
If $p \ne q$ are in $(0,1)$, then $\murec_p$ and $\murec_q$ are mutually singular.
The same holds, replacing either $\murec_p$ or $\murec_q$, or both,
by $\muBr_p$ or $\muBr_q$.
\end{coro}
\begin{proof}
Let $E_p$ be the set of permutons $\mu$
such that 
\[D_n(\mu)=  \frac1{n-1} \mbb E\big[ \des(\Sample(\mu,n) ) \big] \]
converges to $1-p$ (we do not take a conditional expectation here, since $\mu$ is deterministic).
By Lemma~\ref{lem:number-of-descents}, $\murec_p$ and $\muBr_p$ belongs to $E_p$ a.s.
Since $E_p$ and $E_q$ are disjoint for $p \ne q$, the corollary follows.
\end{proof}

\subsubsection{Comparing the Brownian and recursive separable permutons
with the same parameter}
\label{ssec:singular_different_parameters}
We now want to prove that $\murec_p$ and $\muBr_p$ are singular.
Since both $D_n(\murec_p)$ and $D_n(\muBr_p)$ converge to the same value $1-p$ a.s.,
 we need to find another distinguishing feature.
We will prove that in the Brownian separable permuton, all corners of the square a.s.~carry
some mass, which is not the case for the recursive separable permuton.

To this end, for $\eps>0$, we introduce the following events
(we recall that $\mathcal P$ is the set of permutons):
\begin{align*}
\BL_\eps&=\{ \mu \in \mathcal P: \mu([0,\eps]^2) >0 \} \\
\TL_\eps&=\{ \mu \in \mathcal P: \mu([0,\eps]\times [1-\eps,1]) >0 \}.
\end{align*}
In words, $\BL_\eps$ is the set of permutons that have some mass in
an $\eps$-neighbourhood of the bottom-left corner;
$\TL_\eps$ is the same using the top-left corner.

We now prove two lemmas, showing that the Brownian and recursive separable permutons
behave differently with respect to these events.
We start with the recursive separable permuton.
\begin{lemm}
\label{lem:corners_recursive}
Fix $p \in (0,1)$ and $\eps >0$. We have
\[\mbb P\big(\murec_p \in  \BL_\eps \cap \TL_\eps \big) \le 2\eps.\]
\end{lemm}
\begin{proof}
We recall the construction of $\murec_p$ from Section~\ref{sec:construction_permuton} ,
and use the notation introduced there.

We temporarily assume that $U_1$ is in $(\eps,1-\eps)$ and that $S_1=\oplus$.
We consider some $x<\eps$. 
For any $y \ge U_1$, we have $U_1 \in (x,y]$, and therefore $i_{x,y}=1$
with the notation of Section~\ref{sec:construction_permuton} . Since $x<y$ and $S_1=\oplus$,
this implies $x \prec y$. Taking the contraposition, $y \prec x$ implies $y < U_1$.
Therefore,
\[\phi(x)= \Leb(\{y \in [ 0, 1 ] : y \prec x\}) \le \Leb(\{y \in [ 0, 1 ] : y <U_1\} \le U_1 < 1-\eps.\]
Thus there does not exists $x$ such that $(x,\phi(x)) \in [0,\eps] \times [1-\eps,1]$.
Consequently, under the assumptions $U_1 \in (\eps,1-\eps)$ and $S_1=\oplus$,
we have
\[\murec_p([0,\eps]\times [1-\eps,1]) =0,\text{ or equivalenlty, } \murec_p \notin \TL_\eps.\]
Similarly, one can prove that, if $U_1$ is in $(\eps,1-\eps)$ and $S_1=\ominus$,
then $\murec_p \notin \BL_\eps$.
Therefore we have
\[\mbb P\big(\murec_p \in  \BL_\eps \cap \TL_\eps\big) \le \mbb P\big( U_1 \notin (\eps,1-\eps) \big) \le 2\eps. \qedhere \]
\end{proof}

Considering the Brownian separable permuton instead,
we have the following.
\begin{lemm}
\label{lem:corners_Brownian}
Fix $p \in (0,1)$ and $\eps>0$. We have
\[\mbb P\big(\muBr_p \in  \BL_\eps \big) =1.\]
\end{lemm}
\begin{proof}
As recalled above,
$\muBr_p$ can be constructed starting from a Brownian excursion $\exc$ and a sequence of signs $(S_m)_{m \in \Min(\exc)}$ indexed by the (positions of) the local minima of $\exc$.
We fix a realization of $\exc$ and $\bm S=(S_m)_{m \in \Min(\exc)}$ (and hence of $\muBr_p$);
most quantities below, including the random order $\prec_{\text{Br}}$ and the function $\phi_{\text{Br}}$,
depend implicitly on $\exc$ and $\bm S$.
Our first goal is to find a local minimum $m_0$ of $\exc$,
such that  $S_{m_0}=\oplus$, $m_0<\eps/2$ and $\exc(m_0)<\min_{t \in [m_0,1-\eps/2]} \exc(t)$.
Since the proof involves quite a bit of notation, we
illustrate it on Fig.~\ref{fig:corner_Brownian}.
\begin{figure}
\[\includegraphics[height=35mm]{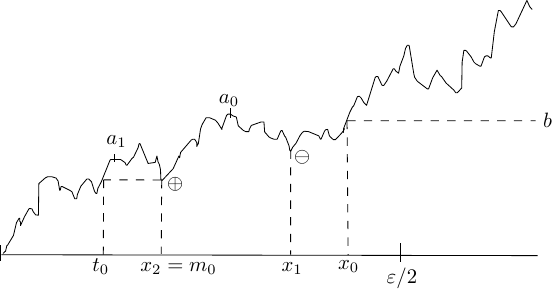}\]
\caption{Illustration of the notation involved in the proof of Lemma~\ref{lem:corners_Brownian}. 
For readability, we have only represented an initial segment of the Brownian excursion $\exc$.
In the picture, we have $S_{x_1}=\ominus$ so that
we have to find another candidate $x_2$ as explained in the proof.
This time we have $S_{x_2}=\oplus$ and we set $m_0=x_2$.}
\label{fig:corner_Brownian}
\end{figure}

We first let $b=\min_{t \in [\eps/2,1-\eps/2]} \exc(t)$
and $x_0=\sup\{t<\eps/2: \exc(t)=m\}$.
A.s., we can then find $a_0<x_0$ such that $\exc$ reaches its minimum on $[a_0,x_0]$
somewhere in the interior of this interval 
(if it was not the case, $\exc$ would be increasing on an initial segment $[0,\delta]$ for some $\delta$, 
which is known to happen with probability $0$).
Let $x_1$ be the point where $\exc$ is minimal on $[a_0,x_0]$.
This is a local minimum, and therefore carries a sign $S_{x_1}$. If $S_{x_1}=\oplus$,
we define $m_0=x_1$, and we verify easily that it satisfies the desired properties.

Otherwise, we iterate the process: a.s., we can then find $a_1<x_1$ such that $\exc$ reaches its minimum on $[a_1;x_1]$
somewhere in the interior of this interval.
We call $x_2$ the point where this minimum is reached. If $S_{x_2}=\oplus$,
we define $m_0=x_2$. If not, we iterate another time.
Doing so, we will construct a sequence of local minima $x_1,x_2,\dots$, and since the associated signs are i.i.d.~and equal to $\oplus$ with positive probability, we will eventually find $x_{i}$ with $S_{x_i}=\oplus$.
Then we set $m_0=x_i$, and verify easily that $m_0$ satisfies the desired properties.

Having found $m_0$, we look for the last time $t_0$ before $m_0$ with $\exc(t_0)=\exc(m_0)$. By construction, if we take $x$ in $(t_0,m_0)$ and $y$ in $(m_0,1-\eps/2)$ the minimum of $\exc$ in the interval $[x,y]$ is reached in $m_0$.
Since $S_{m_0}=\oplus$, we have $x \prec_{\text{Br}} y$, 
where $\prec_{\text{Br}}$ is the order appearing in the construction of the Brownian separable permuton.
Therefore, for $x$ in $(t_0,m_0)$, we have
\[\phi_{\text{Br}}(x)= \Leb(\{y \in [ 0, 1 ] : y \prec_{\text{Br}} x\}) \le 1-\Leb( (m_0,1-\eps/2) ) =m_0 +\eps/2 <\eps.\]
Letting $U$ be a uniform random variable in $[0,1]$, we have, a.s., 
\[\muBr_p([0,\eps]^2) = \mbb P\big( (U,\phi_{\text{Br}}(U)) \in [0,\eps]^2 | (\exc,\bm S) \big)
\ge \mbb P\big( U \in (t_0,m_0) | (\exc,\bm S) \big) =m_0-t_0>0;\]
(recall that $m_0$ and $t_0$ depends on $(\exc,\bm S)$ and thus are random variables themselves).
This proves the lemma.
\end{proof}

\begin{coro}
Fix $p$ in $(0,1)$. Then the distributions of $\murec_p$ and $\muBr_p$
are mutually singular.
\end{coro}
\begin{proof}
We consider the decreasing intersection
\[E:= \bigcap_{\eps >0} ( \BL_\eps \cap \TL_\eps ). \]
From Lemma~\ref{lem:corners_recursive}, we know that $\mbb P\big(\murec_p \in E)=0$.
On the other hand, Lemma~\ref{lem:corners_Brownian} tells us that, for any $\eps>0$, we have
\[\mbb P\big(\muBr_p \in  \BL_\eps \big) =1.\]
By symmetry, the same holds replacing $\BL_\eps$ by $\TL_\eps$,
and thus, for any $\eps>0$, 
\[\mbb P\big(\muBr_p \in  \BL_\eps \cap \TL_\eps \big) =1.\]
Consequently, $\mbb P\big(\murec_p \in E)=1$, proving the corollary.
\end{proof}

\begin{rema}
The difference between recursive and Brownian separable permutons
which we exhibited in the proof can be observed on simulations.
Fig.~\ref{fig:compare_Brownian_recursive} shows simulations of a random recursive 
separable permutation with parameter $p=1/2$ (on the left) and of a uniform random separable permutation (on the right), both of size 1000. From the result of this paper and of \cite{bassino2018BrownianSeparable}, the corresponding limiting permutons
are the recursive and Brownian separable permutons, respectively, each time of parameter $p=1/2$.
We see that in the Brownian case (picture on the right),
there are some points relatively close to each one of the four corners of the square,
 which is not the case in the recursive case (picture on the left).
 
 {An informal explanation of that can also be given at the discrete level.
 As we have seen, recursive separable permutations of size $n$ are
 associated with uniform random {\em labelled increasing} binary trees (Section~\ref{ssec:expected_pattern_densities}).
 It is known that in such random trees,
  the two subtrees attached to the root splits both have macroscopic size
 (this follows, for example, from the correspondence between increasing trees and permutations; see, {\em e.g.}, \cite[Example II.7]{flajolet2009analytic}).
 Hence $\sigma^{(n),p}=\tau \oplus \rho$ or $\sigma^{(n),p}=\tau \ominus \rho$,
 for some permutations $\tau$ and $\rho$ of macroscopic sizes
 (this can also directly be seen on Proposition~\ref{prop:self_similarity_discrete}).
 This explains why some corners of the permutation diagram of $\sigma^{(n),p}$ are empty.}

{ On the other hand, uniform random separable permutations of size $n$ are                              
 associated with a uniform random Schroder tree (see \cite{bassino2018BrownianSeparable}).
 In such a random tree, 
among the subtree attached to the root,
a single one of them contains most of the vertices of the trees, 
 the other ones having typically size $O(1)$; 
 see, {\em e.g.}, the local limit results given in \cite{janson2012simply}.
 Such unbalanced splits are repeated many times, with signs either $\oplus$ and $\ominus$, before a macroscopic split arises,
 and this explains why we find points in the permutation diagram near all corners of the square.}
\end{rema}
\begin{figure}
\[ \includegraphics[height=45mm]{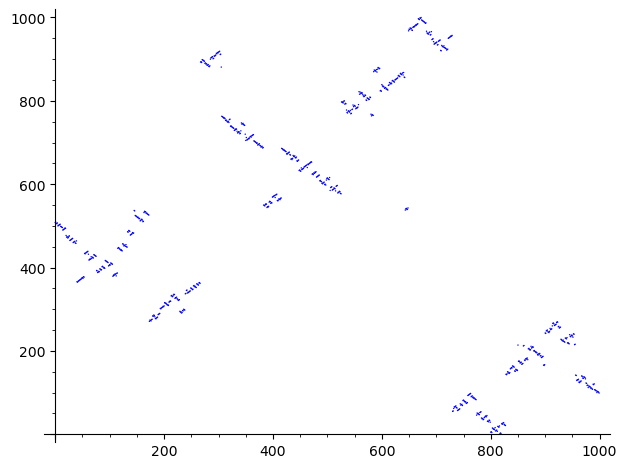} \quad  \includegraphics[height=45mm]{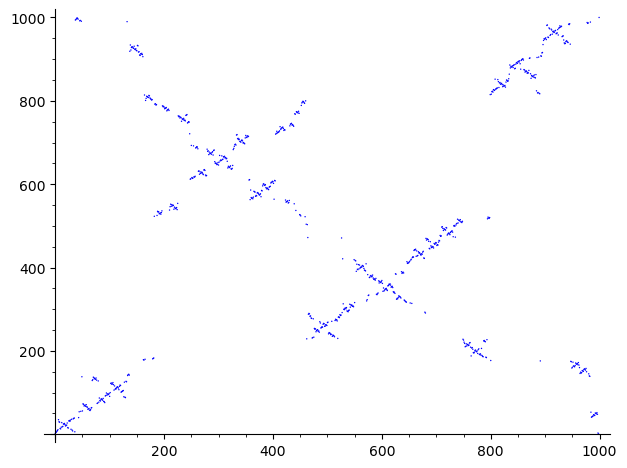}\]
\caption{Diagram of random separable permutations of 1000:
on the left, the permutation has been generated via the recursive algorithm given in Section~\ref{ssec:model}; on the right it is a uniform random separable permutation of 
size 1000 (sampled via the so-called recursive method).}
\label{fig:compare_Brownian_recursive}
\end{figure}

\section*{Acknowledgements}
An important part of this work was conducted while the second author (KRL)
was a postdoctoral fellow at Universit\'e de Lorraine {funded} by the I-Site Lorraine Universit\'e d'Excellence (LUE). Both authors are grateful to LUE for this opportunity.

The authors would like to thank Cyril Marzouk for a discussion about recursive
models of tree-like structures, and in particular for pointing out reference \cite{curien2009recursive}.
{They are also grateful to anonymous referees for their suggestions
to improve the presentation of the paper.
One of the referee suggested in particular the current proof
of Proposition~\ref{prop characterization pushforward convergence},
which is shorter and more probabilistic than the one in the first arXiv version of the paper.}

Finally, simulations of Figures \ref{fig:simu_increasing_separable_several_sizes}, \ref{fig:recursive_cographs}, \ref{fig:graph_phik}, \ref{fig:lambda} and \ref{fig:compare_Brownian_recursive},
as well as the generation of data for Figure~\ref{fig:discrete_intensity},
were done with the computer algebra software SageMath~\cite{sagemath}.
The authors are grateful to its developers for their wonderful job.


\begin{thebibliography}{10}

\bibitem{albert2020logic}
M.~Albert, M.~Bouvel, and V.~F{\'e}ray.
\newblock Two first-order logics of permutations.
\newblock {\em J. Comb. Theory, Ser. A}, 171:46, 2020.
\newblock Id/No 105158.

\bibitem{aldous1993CRT3}
D.~Aldous.
\newblock The continuum random tree. {III}.
\newblock {\em Ann. Probab.}, 21(1):248--289, 1993.

\bibitem{aldous1994triangulating}
D.~Aldous.
\newblock Triangulating the circle, at random.
\newblock {\em Am. Math. Mon.}, 101(3):223--233, 1994.

\bibitem{bassino2022linear}
F.~Bassino, M.~Bouvel, M.~Drmota, V.~F{\'e}ray, L.~Gerin, M.~Maazoun, and
  A.~Pierrot.
\newblock Linear-sized independent sets in random cographs and increasing
  subsequences in separable permutations.
\newblock {\em Combin. Th.}, 2(3):35 pages, 2022.

\bibitem{bassino2020universal}
F.~Bassino, M.~Bouvel, V.~F{\'e}ray, L.~Gerin, M.~Maazoun, and A.~Pierrot.
\newblock Universal limits of substitution-closed permutation classes.
\newblock {\em J. Eur. Math. Soc.}, 22(11):3565--3639, 2020.

\bibitem{bassino2022cographs}
F.~Bassino, M.~Bouvel, V.~F\'eray, L.~Gerin, M.~Maazoun, and A.~Pierrot.
\newblock Random cographs: Brownian graphon limit and asymptotic degree
  distribution.
\newblock {\em Random Structures \& Algorithms}, 60(2):166--200, 2022.

\bibitem{bassino2018BrownianSeparable}
F.~Bassino, M.~Bouvel, V.~F\'{e}ray, L.~Gerin, and A.~Pierrot.
\newblock The {B}rownian limit of separable permutations.
\newblock {\em Ann. Probab.}, 46(4):2134--2189, 2018.

\bibitem{bhattacharya2017degree}
B.~Bhattacharya and S.~Mukherjee.
\newblock Degree sequence of random permutation graphs.
\newblock {\em Ann. Appl. Probab.}, 27(1):439--484, 2017.

\bibitem{borga2021skew}
J.~Borga.
\newblock The {Skew} {B}rownian permuton: a new universality class for random
  constrained permutations.
\newblock {\em Proc. Lond. Math. Soc. (3)}, 126(6):1842--1883, 2023.

\bibitem{borga2022meanders}
J.~Borga, E.~Gwynne, and X.~Sun.
\newblock {Permutons, meanders, and SLE-decorated Liouville quantum gravity},
  2022.
\newblock preprint arXiv:2207.0239.

\bibitem{borga2023baxter}
J.~Borga, N.~Holden, X.~Sun, and P.~Yu.
\newblock {Baxter permuton and Liouville quantum gravity}.
\newblock {\em Probability Theory and Related Fields}, pages 1--49, 2023.

\bibitem{bose1998pattern}
P.~Bose, J.~Buss, and A.~Lubiw.
\newblock Pattern matching for permutations.
\newblock {\em Inf. Process. Lett.}, 65(5):277--283, 1998.

\bibitem{clement2008wassertein}
P.~Clement and W.~Desch.
\newblock An elementary proof of the triangle inequality for the {Wasserstein}
  metric.
\newblock {\em Proc. Am. Math. Soc.}, 136(1):333--339, 2008.

\bibitem{curien2009recursive}
N.~Curien and J.-F. Le~Gall.
\newblock Random recursive triangulations of the disk via fragmentation theory.
\newblock {\em Ann. Probab.}, 39(6):2224--2270, 2011.

\bibitem{flajolet2009analytic}
P.~Flajolet and R.~Sedgewick.
\newblock {\em Analytic combinatorics}.
\newblock Cambridge: Cambridge University Press, 2009.

\bibitem{goaoc2020convex}
X.~Goaoc and E.~Welzl.
\newblock Convex hulls of random order types.
\newblock In {\em 36th international symposium on computational geometry, SoCG
  2020, Z\"urich, Switzerland (virtual conference), June 23--26, 2020.
  Proceedings}, page~15. Leibniz Zentrum f{\"u}r Informatik, 2020.
\newblock Id/No 49.

\bibitem{grubel2022ranks}
R.~Gr{\"u}bel.
\newblock Ranks, copulas, and permutons.
\newblock {\em Metrika}, 2023.

\bibitem{MR2995721}
C.~Hoppen, Y.~Kohayakawa, C.~G. Moreira, B.~R\'{a}th, and R.~Menezes~Sampaio.
\newblock Limits of permutation sequences.
\newblock {\em J. Combin. Theory Ser. B}, 103(1):93--113, 2013.

\bibitem{janson2012simply}
S.~Janson.
\newblock Simply generated trees, conditioned {G}alton--{W}atson trees, random
  allocations and condensation.
\newblock {\em Probability Surveys}, 9:103--252, 2012.

\bibitem{maazoun2020BrownianPermuton}
M.~Maazoun.
\newblock On the {B}rownian separable permuton.
\newblock {\em Combinatorics, Probability and Computing}, 29(2):241--266, 2020.

\bibitem{molloy2022degreerestricted}
M.~Molloy, E.~Surya, and L.~Warnke.
\newblock The degree-restricted random process is far from uniform, 2022.
\newblock Preprint arXiv:2211.00835.

\bibitem{pinsky2021separable}
R.~Pinsky.
\newblock The infinite limit of separable permutations.
\newblock {\em Random Struct. \& Algo.}, 59(4):622--639, 2021.

\bibitem{pittel1994recursive_trees}
B.~Pittel.
\newblock Note on the heights of random recursive trees and random
  {{\(m\)}}-ary search trees.
\newblock {\em Random Struct. Algorithms}, 5(2):337--347, 1994.

\bibitem{stufler2021cographs}
B.~Stufler.
\newblock Graphon convergence of random cographs.
\newblock {\em Random Structures \& Algorithms}, 59(3):464--491, 2021.

\bibitem{sagemath}
{The Sage Developers}.
\newblock {\em {S}ageMath, the {S}age {M}athematics {S}oftware {S}ystem
  ({V}ersion 9.4)}, 2021.
\newblock {\tt https://www.sagemath.org}.

\end{thebibliography}
\end{document}